\DeclareMathOperator{\Tr}{\mathrm{Tr}}
\DeclareMathOperator{\ai}{Ai}
\DeclareMathOperator{\re}{Re}
\DeclareMathOperator{\im}{Im}
\DeclareMathOperator{\ee}{\rm e}
\DeclareMathOperator{\supp}{supp}
\newcommand{\itid}{\mathsf{1}}  
\newcommand{\N}{\mathbb{N}}
\newcommand{\C}{\mathbb{C}}
\newcommand{\R}{\mathbb{R}}
\newcommand{\Z}{\mathbb{Z}}
\newcommand{\dd}{\mathrm{d}}
\newcommand*{\deff}{\mathrel{\vcenter{\baselineskip0.5ex \lineskiplimit0pt
                     \hbox{\scriptsize.}\hbox{\scriptsize.}}}%
                     =}
\newcommand*{\revdeff}{=\mathrel{\vcenter{\baselineskip0.5ex \lineskiplimit0pt
                     \hbox{\scriptsize.}\hbox{\scriptsize.}}}%
                     }
\newcommand\mb[1]{\mathbb{#1}}
\renewcommand{\bm}{\mathbf}
\newcommand{\mcal}{\mathcal}
\newcommand{\msf}{\mathsf}
\newcommand{\wh}{\widehat}
\newcommand{\Ai}{\mathrm{Ai}}
\newcommand{\dsin}{{\mathrm{dsin}}}
\newcommand{\K}{\mathbb K}
\newtheorem{theorem}{Theorem}[section]
\newtheorem{lemma}[theorem]{Lemma}
\newtheorem{corollary}[theorem]{Corollary}
\theoremstyle{definition}
\newtheorem{assumption}[theorem]{Assumption}
\theoremstyle{remark}
\newtheorem{remark}[theorem]{Remark}
\numberwithin{equation}{section}
\newcommand\restr[2]{{
		\left.\kern-\nulldelimiterspace 
		#1 
		\vphantom{\big|} 
		\right|_{#2} 
}}
\newcommand{\rev}{}
\begin{document}

\title{Deformations of biorthogonal ensembles and universality}

\author[T.~Claeys]{Tom Claeys}
\address[TC]{UCLouvain, Belgium.}
\email{tom.claeys@uclouvain.be}

\author[G.~Silva]{Guilherme L.~F.~Silva}
\address[GS]{Instituto de Ciências Matemáticas e de Computação, Universidade de S\~ao Paulo, Brazil.}
\email{silvag@icmc.usp.br}

\date{}


\begin{abstract}
We consider a large class of deformations of continuous and discrete biorthogonal ensembles and investigate 
their behavior in the limit of a large number of particles.
We provide sufficient conditions to ensure that if a biorthogonal ensemble converges to a (universal) limiting process, then the deformed biorthogonal ensemble converges to a deformed version of the same limiting process. To construct the deformed version of the limiting process, we rely on a procedure of marking and conditioning. Our approach is based on an analysis of the probability generating functionals of the ensembles and is conceptually different from the traditional approach via correlation kernels.
Thanks to this method, our sufficient conditions are rather mild and do not rely on much regularity of the original ensemble and of the deformation. 
As a consequence of our results, we obtain probabilistic interpretations of several Painlevé-type kernels that have been constructed in the literature, as deformations of classical sine and Airy point processes.
\end{abstract}


\vspace*{-1.6cm}

\maketitle

\section{Introduction}

The notion of {\em universality} is omnipresent in the modeling of physical phenomena with complex interactions. Informally speaking, it means that large families of models share the same or similar features on microscopic scales, or in other words that microscopic behavior is to a large extent independent of specific macroscopic dynamics of the model at hand {\rev\cite{DeiftUniversality2007}}.

Random matrix theory is among the richest grounds for the manifestation and deep understanding of universality, and it is understood that the microscopic behavior of eigenvalues in a large class of {\rev Hermitian} random matrix ensembles is described in terms of a limited number of universal point processes. This means that in the large matrix limit, the microscopic behavior of eigenvalues is described by one of a few canonical processes, for instance the sine point process in the bulk of the spectrum and the Airy point process at soft edges of the spectrum. In non-equilibrium statistical mechanics, large classes of growth models are described, in appropriate large time scales, by universal fluctuation fields, such as the KPZ fixed point or the Gaussian Free Field, which contain in appropriate senses several of the aforementioned random matrix limiting processes {\rev \cite{Kenyon2004, Remenik2023ICM, borodin_gorin_notes}}.

A rigorous mathematical justification of universality is often possible in models that have an integrable structure. In the aforementioned random matrix theory, for instance, a form of integrability arises at the level of eigenvalues, which in many models are described in terms of determinantal point processes. 
In such models, the relevant statistics are encoded in a function of two variables, the correlation kernel, which depends on the size of the system in a parametric way.
Through the analysis of this correlation kernel in the large size limit, one is able to leverage local information to provide universality results in its various different forms.

A natural viewpoint on universality is that {\rev perturbations or deformations} of the original system should not affect their large size microscopic behavior. While it is widely understood that microscopic properties are indeed little affected by macroscopic and mesoscopic deformations of models, we will show in contrast that they are sensitive to microscopic deformations. More precisely, we will show that such microscopic deformations lead to entire novel families of possible microscopic limit point processes. 

We consider a large family of determinantal point processes, known as biorthogonal ensembles, {\rev consisting of $n$ particles} whose correlation functions admit a representation in terms of biorthogonal functions. This family of processes encompasses several important random matrix models, certain families of growth processes, polymer models, random tilings, random partitions and more. We consider deformations of biorthogonal ensembles, and investigate under which conditions the behavior of the original biorthogonal ensemble carries over to the deformed ensemble, in the limit of a large number of particles. {\rev Here, we are concerned with the following question:} if the original biorthogonal ensemble satisfies some form of universality on microscopic scales, will the deformed biorthogonal ensemble satisfy the same type of universality, or will the deformation cause a different type of microscopic behavior?

For a class of deformations of the finite $n$ particle system which take place at a microscopic scale, we will show that in the large $n$ limit, the universal limiting point process is also deformed.  Remarkably, both the deformed biorthogonal ensemble and the deformed limiting point process can be constructed via a procedure of marking and conditioning. In this way, even if the limiting point process is deformed, our results show that it is still determined by the limit of the non-deformed biorthogonal ensemble, together with the form of the deformation. 

The main conceptual novelty is that we approach this problem via the probability generating functional of the point processes and not via the more commonly used scaling limits of correlation kernels. This different angle has two major advantages. Firstly, we do not need to perform a technical asymptotic analysis of the deformed models and we can instead rely on knowledge of the undeformed point process combined with properties of the deformation. Secondly, we only need minimal regularity requirements for the deformations.

We now move on to the detailed discussion of our results.

\section{Statement of results}

Throughout the paper, we use $\K$ to denote the state space, or in other words the space to which particles of the system belong. We will restrict ourselves to the case where $\mathbb K=\mathbb R^d$ with $d$ a positive integer, in particular also encompassing point processes on $\mathbb C^m\simeq \R^{2m}$ . Our methods could apply to a more general framework with $\K$ being a {\rev subspace} of an arbitrary finite-dimensional complex Banach space, but for the applications we have in mind the case $\K=\R^d$ suffices, and to avoid more technical arguments we restrict to them.

\subsection*{Biorthogonal ensembles}\hfill \\
A {\it biorthogonal ensemble} on $\K$ \cite{Borodin99biorth, KuijlaarsICM} is a probability {\rev measure} on $\K^n$, invariant under permutation of the variables, of the form
\begin{equation}\label{def:biOPE}
\frac{1}{\msf z_n}\det\left[f_k(x_j)\right]_{j,k=1}^{n}\det\left[{g_k(x_j)}\right]_{j,k=1}^{n} \prod_{j=1}^n w_n(x_j)\dd \nu(x_j).
\end{equation}
Here the positive reference measure $\nu$ is a Radon measure on $\K $, possibly varying with $n$, the systems of measurable and complex-valued functions $(f_j)_{j=1}^{n}, (g_j)_{j=1}^{n}$ are such that 
$$
\det\left[f_k(x_j)\right]_{j,k=1}^{n}\det\left[{g_k(x_j)}\right]_{j,k=1}^{n}> 0
\qquad \text{for } \nu\text{-a.e. } (x_1,\ldots, x_n)\in \K^n,
$$ 
the weight function $w_n:\K \to[0,+\infty)$ is $\nu$-integrable, and the normalization constant $\msf z_n$ is given by
\[\msf z_n\deff \int_{\K^n}\det\left[f_k(x_j)\right]_{j,k=1}^n\det\left[{g_k(x_j)}\right]_{j,k=1}^{n} \prod_{j=1}^{n}w_n(x_j) \dd \nu(x_j).\]
The entries of a random vector $(x_1,\ldots,x_n)$ distributed according to \eqref{def:biOPE} are seen as {\rev locations $x_j$ of random particles} in the state space $\K$, and we refer to \eqref{def:biOPE} as a biorthogonal ensemble with $n$ particles. Typical absolutely continuous choices of $\nu$ include the ($n$-independent) Lebesgue measure $\dd\nu(x)=\dd x$ on $\R$, $\R^2$ or $\C$, or also the Lebesgue measure restricted to the unit interval $[0,1]\subset \R$. In a similar spirit, in a discrete setup we can take $\nu=\nu_n$ to be the counting measure on the rescaled positive integer lattice $\frac{1}{n}\Z_{\geq 0}$, hence varying with the number {\rev $n$} of particles. 

The weight function, as well as the functions $f_k=f_{n,k}$ and $g_k=g_{n,k}$, may also depend on the number {\rev $n$} of particles, although we will not make such dependence explicit in our notation unless needed. Without loss of generality we could have omitted the weight $w_n$ in \eqref{def:biOPE}, 
since $w_n$ can be absorbed by the reference measure $\nu$ or by the functions $f_k, g_k$, but it will turn out instructive and convenient to separate the dependence on $w_n$ from the rest of the measure. 

Arguably the most studied of such models {\rev \eqref{def:biOPE}} are the {\em orthogonal polynomial ensembles} \cite{Konig05, KuijlaarsICM}, obtained when we take $\dd\nu(x)=\dd x$ and $f_k(x)=g_k(x)=x^{k-1}$ on $\K=\R$, such that both determinants are Vandermonde determinants and the distribution becomes
\begin{equation}\label{def:OPE}
\frac{1}{\msf z_n}\prod_{1\leq j<k\leq n}(x_k-x_j)^2 \prod_{j=1}^n w_n(x_j)\dd x_j.
\end{equation}
For $w_n=\ee^{-nV}$ and $V$ sufficiently regular and with sufficient growth at $\pm\infty$, this is the probability distribution of the eigenvalues in the {\em unitary invariant random matrix ensemble}
\begin{equation}\label{def:UE}
\frac{1}{\msf z_n'}\ee^{-n{\Tr V(M)}}\dd M,\qquad \dd M=\prod_{j=1}^n \dd M_{jj}\,\prod_{1\leq j<k\leq n}\dd\re M_{jk}\dd\im M_{jk},
\end{equation}
on the space of $n\times n$ Hermitian matrices $M$ \cite{BleherIts99,Deiftetal1999,Deiftetal99b}. 
Another interesting and more general situation occurs when $f_k(x)=x^{k-1}$ but with $g_k$ general, in which case we obtain the {\em polynomial ensemble} \cite{KuijlaarsStivigny14}
\begin{equation}\label{def:PE}
\frac{1}{\msf z_n}\prod_{1\leq j<k\leq n}(x_k-x_j)\det\left[g_k(x_j)\right]_{j,k=1}^n \prod_{j=1}^n w_n(x_j)\dd x_j.
\end{equation}
Probability distributions of this form occur for singular values of products of random matrices \cite{AkemannIpsen15, AkemannKieburgWei13, KuijlaarsStivigny14, KieburgKuijlaarsStivigny16}, coupled random matrix models \cite{bleher_kuijlaars_external_source_multiple_orthogonal, DuitsKuijlaars09} or matrix models with eigenvalues in the plane \cite{BaloghBertolaLeeMcLaughlin15,KuijlaarsBleher12},  and in Muttalib-Borodin ensembles \cite{Borodin99biorth}. The general family of biorthogonal ensembles is rather large, and particular instances occur in connection to non-intersecting random walks and paths, random growth models, random tilings, random partitions, last passage percolation and polymer models, among many others \cite{CafassoClaeys24, delvaux_kuijlaars_zhang,Duits18, DuitsKuijlaars2021, ImamuraSasamoto16}.

There is a natural discrete analogue to \eqref{def:PE}, obtained upon replacing $\dd x$ by the counting measure $\dd\nu$ of, say, the re-scaled integers $\frac{1}{n}\Z\subset \mb K=\R$, or another discrete lattice over $\R$. Such discrete measures form part of the class of discrete Coulomb gases \cite{GuionnetJiaoyang19, johansson_2003}, and besides their natural interest as gases in statistical mechanics, they also appear in connection with exclusion processes, the six-vertex model, asymptotic representation theory, random tilings, to mention only a few \cite{Johansson2001,Johansson2005RWHahn, Borodin2017, Bleher2011, bleher_liechty_book, borodin_gorin_notes}.

Biorthogonal ensembles on subsets of $\mathbb R^2$ (or equivalently $\mathbb C$) arise for non-Hermitian random matrices and biorthogonal ensembles on higher-dimensional spaces have been explicitly constructed lately, for instance the higher dimensional elliptic ensembles \cite{AkemannMolagDuits2023} and the spherical ensembles \cite{BeltranMarzoOrtega-Cerda2016}. Some related constructions of point processes on more general manifolds are also of interest (see for instance \cite{Berman2014} and the references therein) and many of the aspects we will discuss may be adapted to these frameworks as well with non-essential modifications, but for the sake of clarity we restrict to considering the state space $\K$ to be a Euclidean space.

\subsection*{Determinantal point processes.}\hfill \\
We will assume throughout the paper that the functions $f_j$ and $g_j$ defining the biorthogonal ensemble \eqref{def:biOPE} are such that $\sqrt{w_n}f_j$, $\sqrt{w_n}g_j$ belong to $L^2(\nu)= L^2(\K\to \mb C,\nu)$.
Then, the biorthogonal ensemble \eqref{def:biOPE} is a {\em determinantal point process} \cite{Borodin99biorth,Soshnikov2000a}:
the $m$-point correlation functions $\rho_{n,m}:\K^m\to [0,+\infty)$ exist for any $m\in\mathbb N$, and there exists a kernel $\msf k_n:\K^2\to \mathbb C\in L^2(\nu\times \nu)$ for which they take the form
\begin{equation}\label{def:DPPcorr}
\rho_{n,m}(x_1,\ldots, x_m)=\det\left(\msf k_n(x_j,x_k)\right)_{j,k=1}^m.
\end{equation}
The correlation kernel of a determinantal point process is not unique. However, for biorthogonal ensembles the kernel $\msf k_n$ may be taken in the form
\begin{equation}\label{def:kernel}
\msf k_n(x,y)=\sqrt{w_n(x)w_n(y)}\sum_{k=1}^{n}\phi_k(x){\psi_k(y)},
\end{equation}
where the {\rev (complex)} linear span of $(\phi_k)_{k=1}^{n}$ is equal to that of $(f_k)_{k=1}^{n}$, and the linear span of $(\psi_j)_{j=1}^{n}$ is equal to that of $(g_k)_{k=1}^{n}$, and they satisfy the {\em biorthogonality relations} that give name to the class,
\begin{equation}\label{eq:biorth}
\int_{\K}\phi_k(x){\psi_j(x)}w_n(x)\dd\nu(x)=\delta_{jk}.
\end{equation}
Note that the left hand side above is the inner product of $\phi_k$ with $\overline{\psi_j}$, not the inner product of $\phi_k$ with $\psi_j$.

From the general theory of determinantal point processes \cite{Johansson06,Borodin2011,Soshnikov2000a}, we then know that statistics of \eqref{def:biOPE} can be computed through a
{\em Fredholm series}: for any bounded measurable function $h:\K\to \R$, we have that
\begin{equation}\label{def:Fredholmseries}
\mathbb E_n\left[\prod_{k=1}^n(1-h(x_k))\right]= \sum_{k=0}^\infty\frac{(-1)^k}{k!}\int_{\K^k}\det\left(\msf k_n(x_j,x_\ell)\right)_{j,\ell=1}^k \prod_{j=1}^k h(x_j)\dd \nu(x_j),
\end{equation}
where the expectation $\mathbb E_n$ is with respect to the probability {\rev law} \eqref{def:biOPE}, and where all terms corresponding to $k>n$ on the right-hand side vanish.

Since  $\sqrt{w_n}\phi_j$, $\sqrt{w_n}\psi_j$ are in $L^2(\nu)$,  the integral operator
$$
\bm k_n f(x)=\int_{\K}\msf k_n(x,y){f(y)}\dd \nu(y)
$$
is a finite rank (not necessarily Hermitian) linear projection operator 
on $L^2(\nu)$, whose range is the linear span of $\sqrt{w_n}\phi_1,\ldots, \sqrt{w_n}\phi_{n}$, and whose kernel is the orthogonal complement of the linear span of $\sqrt{w_n}\overline{\psi_1},\ldots, \sqrt{w_n}\overline{\psi_{n}}$. 
In particular, $\mathbf k_n$ is a {\em trace-class operator}, and the right-hand side of \eqref{def:Fredholmseries} is then equal to the {\em Fredholm determinant} 
$\det(\itid-h\mathbf k_n)_{L^2(\nu)}$ defined through the functional calculus for trace-class operators \cite{Simon_trace}, where $h$ denotes the operator of multiplication with the function $h$. When the underlying measure is clear from the context or unimportant, we sometimes drop the notation $L^2(\nu)$ and write simply $\det(\itid-h\mathbf k_n)_{L^2(\nu)}=\det(\itid-h\mathbf k_n)$.
{\rev For a sufficiently large class of functions $h$, the expectations in \eqref{def:Fredholmseries} characterize the point process \cite{DaleyVere-Jones-VolI}.}

\subsection*{Scaling limits}\hfill \\
A common approach towards {\rev asymptotic} results in biorthogonal ensembles goes via {\em scaling limits of the correlation kernel}. Given a sequence of biorthogonal ensembles \eqref{def:biOPE} with correlation kernels $\msf k_n$, let us fix a reference point $x^*\in\K$, and define the re-scaled kernel
\begin{equation}\label{def:rescaled kernel}
{\msf K}_n(u,v)\deff \frac{1}{cn^\gamma}\msf k_n\left(x^*+\frac{u}{cn^\gamma},x^*+\frac{v}{cn^\gamma}\right),\quad u,v\in \K,
\end{equation}
for suitably chosen $c\in\mb C,\gamma>0$, possibly depending on the choice of $x^*\in \K$ and on the choice of $w_n$.
This kernel ${\msf K}_n$ also defines a scaled determinantal point process $\mathcal X_n$ which is a biorthogonal ensemble, namely the probability distribution 
\begin{equation}\label{def:BiOEscaled}
\frac{1}{\msf Z_n}\det\left[F_k(u_j)\right]_{j,k=1}^{n}\det\left[{G_k(u_j)}\right]_{j,k=1}^{n} \prod_{j=1}^n W_n(u_j)\dd \mu_n(u_j),\qquad u_1,\ldots, u_n\in\K,
\end{equation}
where
\begin{equation}\label{def:wtP}
F_k(u)=f_k\left(x(u)\right),\  G_k(u)=g_k\left(x(u)\right),\  W_n(u)=w_n\left(x(u)\right),\ \dd\mu_n(u)\deff
cn^\gamma \dd \nu\left(x(u)\right),
\end{equation}
with $x(u)=x^*+\frac{u}{cn^\gamma}$,
and 
\[
\msf Z_n\deff \int_{\K^n}\det\left[F_k(u_j)\right]_{j,k=1}^n\det\left[{G_k(u_j)}\right]_{j,k=1}^{n} \prod_{j=1}^{n}W_n(u_j) \dd \mu_n(u_j).
\]

This is in other words the probability distribution for the scaled points $u_j=u(x_j)=cn^\gamma(x_j-x^*)$ living on $\K$. When, for instance, $\dd\nu(x)=\dd x$ is the Lebesgue measure on $\R$, then $\dd \mu_n(u)=\dd u$ is simply the Lebesgue measure on the scale $u$, which is independent of $n$. In contrast, if say $\dd\nu$ is the counting measure on $\Z$, then $\dd\mu_n$ is the counting measure on the scaled lattice $\frac{1}{n}\Z$, hence varying with $n$.

Let us continue our discussion for a moment assuming that $\mu_n=\mu$ is independent of $n$. If the re-scaled kernel ${\msf K}_n(u,v)$ converges as $n\to\infty$ to a, say continuous, {\em limit kernel} $\msf K(u,v)$, uniformly for $u,v$ in compact subsets of $\K$, some probabilistic consequences for the point processes may be drawn. First, the limit kernel $\msf K$ then also defines a determinantal point process $\mathcal X$ on $\K$: this follows essentially from a criterion of Lenard described, e.g., in \cite{Soshnikov2000a}. {\rev Secondly, we have weak convergence of the point processes $\mathcal X_n$ to $\mathcal X$ as $n\to\infty$, meaning that \cite[Section 11.1]{DaleyVere-JonesVolII} the finite dimensional distributions of $\mcal X_n$ converge weakly (in the sense of measures) to the finite dimensional distributions of $\mcal X$.} For Hermitian positive-definite kernels, this follows from the fact that the associated operators converge in trace-norm, see e.g.\ \cite[Proposition 3.10]{ShiraiTakahashi}. In general, proving weak convergence of the point processes using convergence of the kernels requires more careful arguments, as we will explain in Remark \ref{remark:weakcvg}.

If $\mu_n$ depends on $n$, the same holds true if $\mu_n$ converges to some limiting measure $\mu$ in a sufficiently strong sense. In many situations, the integral operator $\mathbf K$ with kernel $\msf K$ will be an infinite rank projection on $L^2(\mu)$, hence not trace-class, but {\em locally trace-class}, i.e.\ its restriction to every bounded subset of $\K$ is trace-class. In such cases, random point configurations in $\mathcal X$ are a.s. infinite \cite{Soshnikov2000a} but locally finite, and we represent them as counting measures $\xi=\sum_i \delta_{u_i}$. 

In orthogonal polynomial ensembles with $\dd\nu(x)=\dd x$ and $w_n(x)=\ee^{-nV(x)}$ on $\K=\R$, such scaling limits are well understood. 

If $x^*$ is a (regular) point in the bulk of the spectrum, one has to take $\gamma=1$, and one finds the {\em sine kernel} $\msf K_{\sin}(u,v)\deff\frac{\sin \pi (u-v)}{\pi(u-v)}$ as limit kernel for a suitable choice of $c=c(x^*)>0$. If $x^*$ is a (regular) soft edge, one has to take $\gamma=2/3$ and an appropriate $c>0$, and the limit kernel is the {\em Airy kernel} 
$\msf K_{\rm Ai}(u,v)\deff\frac{\Ai(u)\Ai'(v)-\Ai(v)\Ai'(u)}{u-v}$.
Near points where $w_n$ is discontinuous, limit kernels related to other special functions, like the Bessel kernel or confluent hypergeometric kernel, arise, {\rev see for instance \cite{MorenoMartinezFinkelshteinSousa2010, BorodinOlshanski2001, TracyWidom1994Bessel, NagaoWadati1994, Forrester1993Bessel, KuijlaarsVanlessen2002HardEdge} for some of their early appearances}.
For choices of weight functions $w_n$ leading to singular bulk or edge points, limit kernels connected to Painlev\'e equations appear, see e.g. \cite{Duits14} for an overview. Similar scaling limits have also been found in a variety of biorthogonal ensembles beyond the orthogonal polynomial ensembles, though there is no completely understood classification of all different types of possible limit kernels. 

\subsection*{Weak convergence via probability generating functionals}\hfill \\
Instead of studying the convergence of point processes via their correlation functions, we prefer to study the {\it weak convergence} of point processes $\mcal X_n\stackrel{*}{\to} \mcal X$ in a more direct way.
{\rev We recall that weak convergence $\mcal X_n\stackrel{*}{\to} \mcal X$ of point processes means that the finite dimensional distributions of $\mcal X_n$ converge weakly (in the sense of measures) to the finite dimensional distributions of $\mcal X$.}  {In turn, such weak convergence is equivalent to point-wise convergence of the probability generating functionals \cite[Proposition 11.1.VIII]{DaleyVere-JonesVolII}: $\mathcal X_n$ converges weakly to $\mathcal X$ as $n\to\infty$ if and only if $\lim_{n\to\infty}\mathcal G_n[h]=\mathcal G[h]$ for every continuous function $h:\K\to [0,1]$ of bounded support with $\sup_{x\in \K} h(x)<1$,} where 
\begin{equation}\label{def:GnG}
\mathcal G_n[h]\deff \mathbb E_n \left[\prod_{k=1}^n(1-h(u_k))\right],\qquad \mathcal G[h]\deff \mathbb E \left[\prod_{k}(1-h(u_k))\right]
\end{equation}
are the probability generating functionals for the point processes $\mathcal X_n$ and $\mathcal X$ \cite[Definition 9.4.IV]{DaleyVere-JonesVolII}. In other words, weak convergence is the convergence of all (suitably regular) multiplicative observables of the particle system.

Naturally, another approach towards {\rev convergence results} consists in taking scaling limits directly on the level of probability generating functionals, 
instead of correlation kernels. This will enable us to prove convergence for a large class of weight functions without explicit expressions for correlation kernels at our disposal.
This approach is also rather natural as well from the physical perspective. In essence, we are approaching {\rev convergence} from observables of the model rather than from the correlation functions.

In our context of scaled determinantal point processes with kernels of locally trace-class integral operators ${\mathbf K}_n$, the condition $\lim_{n\to\infty}\mathcal G_n[h]=\mathcal G[h]$ translates into a limit condition for Fredholm determinants,
\begin{equation}
\lim_{n\to\infty}\det(\itid-\sqrt{h}{\mathbf K_n}\sqrt{h})_{L^2(\mu_n)}=\det(\itid-\sqrt{h}{\mathbf K}\sqrt{h})_{L^2(\mu)},
\end{equation}
which, in case $\mu_n=\mu$ is independent of $n$, is achieved if the operators $\sqrt{h}{\mathbf K_n}\sqrt{h}$ converge in trace norm to the operator $\sqrt{h}{\mathbf K}\sqrt{h}$ as $n\to\infty$.
Observe that $\det(\itid-\sqrt{h}{\mathbf K_n}\sqrt{h})_{L^2(\mu_n)}=\det(\itid-h{\mathbf K_n})_{L^2(\mu_n)}$ on the left-hand side, but the Fredholm determinant
$\det(\itid-h{\mathbf K})_{L^2(\mu)}$  is not defined unless if $h\mathbf K$ is trace-class on $L^2(\mu)$. 
Since $\mathbf K$ is locally trace-class and $h$ is assumed to have bounded support, $\sqrt{h}\mathbf K\sqrt{h}$ is trace-class, hence the Fredholm determinant $\det(\itid-\sqrt{h}{\mathbf K}\sqrt{h})_{L^2(\mu)}$ is well-defined. Therefore, we utilize the {\em symmetrized} operator $\sqrt{h}{\mathbf K_n}\sqrt{h}$ also on the left.

\subsection*{A class of deformations of biorthogonal ensembles}\hfill 

Let $(\mathcal X_n)_{n\in\mathbb N}$ be a biorthogonal ensemble of the form \eqref{def:BiOEscaled}, with correlation kernel ${\msf K}_n$ and probability generating functional $\mathcal G_n$. From now on, we always have in mind situations where $\dd\mu_n(u)$ is the reference measure on a microscopic scale, i.e.\ like in \eqref{def:wtP}, the distribution is induced by a scaling of a distribution of the form \eqref{def:biOPE}, so we view $\mcal X_n$ as the already appropriately scaled  biorthogonal ensemble, with corresponding scaled kernel $\msf K_n$ as above.

Given a measurable function $\sigma_n:\K \to [0,1]$ such that $\mathcal G_n[\sigma_n]\neq 0$, we define a deformed biorthogonal ensemble $\mathcal X_n^{\sigma_n}$ with probability distribution
\begin{equation}\label{def:deform}
\frac{1}{\msf Z_n\mathcal G_n[\sigma_n]}\det\left[F_k(u_j)\right]_{j,k=1}^{n}\det\left[{G_k(u_j)}\right]_{j,k=1}^{n} \prod_{j=1}^n (1-\sigma_n(u_j))W_n(u_j){\dd \mu_n(u_j)},
\end{equation}
and with associated probability generating functional 
\begin{multline*}
\mathcal G_n^{\sigma_n}[h]\deff\frac{1}{\msf Z_n\mathcal G_n[\sigma_n]}\int_{\K^n}\det\left[F_k(u_j)\right]_{j,k=1}^{n}
\det\left[{G_k(u_j)}\right]_{j,k=1}^{n} \\
\times
\prod_{j=1}^n (1-h(u_j))(1-
\sigma_n(u_j)) W_n(u_j){\dd \mu_n(u_j)}.
\end{multline*}
%
Any sequence $(\sigma_n)$ thus induces a deformation 
\begin{equation}\label{def:Wndeformed}
W_n^{\sigma_n}(u)\deff (1-\sigma_n(u))W_n(u),
\end{equation} 
of the weight function on microscopic scales; before rescaling, 
in \eqref{def:biOPE}, the deformation corresponds to the deformed weight function
\begin{equation}\label{def:deformedweight}
w_n^{\sigma_n}(x)=\left(1-\sigma_n\left(cn^\gamma(x-x^*)\right)\right)w_n(x).
\end{equation}

 Naturally, one is asked to understand to what extent the convergence of the original ensemble towards a certain limiting point process results in convergence of the deformed ensembles as well. The construction of the deformed ensemble \eqref{def:deform} relies on the explicit form of the joint distribution of biorthogonal ensembles, but the biorthogonality structure is not carried through when taking scaling limits. Hence, before studying convergence of such deformed ensembles, the first question is whether one can define similar deformations $\mathcal X^{\sigma}$ of the limit point processes $\mathcal X$ of the undeformed biorthogonal ensembles. To construct such limiting deformed ensembles, we rely on a probabilistic interpretation of the deformed biorthogonal ensemble $\mathcal{X}_n^{\sigma_n}$ developed in \cite{ClaeysGlesner2021}, as we describe next.

\subsection*{Conditional ensembles and universality.}\hfill

We start with a random point configuration $\xi=\sum_j \delta_{u_j}$ in a ground point process $\mcal X$ on the space $\K$ with reference measure $\mu$, and a sufficiently regular function $\sigma:\K\to [0,1]$ which we view as the symbol for a deformation. 
For our purposes, this ground process $\mcal X$ will be either a  biorthogonal ensemble of the form \eqref{def:BiOEscaled}, or a determinantal point process defined by a limit kernel like the sine or Airy kernel. We associate to each $u_j$ independently a random Bernoulli mark $m_j$, which is equal to $0$ with probability $1-\sigma(u_j)$ and equal to $1$ with complementary probability $\sigma(u_j)$. We then condition this marked point process on the event that all points have mark $m_j=0$, and we denote this conditional point process by $\mcal X^\sigma$. 
We may view the marked process as the observation of particles in an experiment, when receiving a mark $1$ means that the particle was effectively observed. The resulting conditional ensemble $\mcal X^\sigma$ is thus the process obtained under the condition that none of the particles were observed in the experiment. {Within such interpretation, the understanding of $\mcal X^\sigma$ provides insights on the lost data.}

It was proved in \cite{ClaeysGlesner2021} that the result of applying this procedure of marking (with a function $\sigma=\sigma_n$) and conditioning
on a biorthogonal ensemble $\mathcal X_n$ of the form \eqref{def:wtP} is precisely the deformed biorthogonal ensemble \eqref{def:deform}.
But the procedure is more general and applies to general determinantal point processes $\mathcal X$.
The resulting conditional process $\mathcal X^\sigma$ is defined whenever $\mathcal G[\sigma]\neq 0$, and it is still a determinantal point process, which we can see as the natural $n\to\infty$ generalization of the deformation \eqref{def:deform}.
Moreover, suppose that $\mathcal X$ has correlation kernel $\msf K$, and that the associated integral operator $\mathbf K$ is a locally trace-class operator on $L^2(\mu)$.
Then, it was proved in \cite{ClaeysGlesner2021} that
\begin{equation}\label{def:Ksigma}
\mathbf K^\sigma\deff \sqrt{1-\sigma}\mathbf K(\itid-\sigma\mathbf K)^{-1}\sqrt{1-\sigma}
\end{equation}
is a locally trace-class operator on $L^2(\mu)$, and that the integral kernel $\msf K^\sigma$ of this operator is the correlation kernel of the particles in the deformed determinantal point process $\mathcal X^\sigma$. The associated probability generating functional $\mathcal G^\sigma$ is thus given by
\begin{equation}\label{def:Gsigma}
\mathcal G^\sigma[h]=\sum_{k=0}^\infty\frac{(-1)^k}{k!}\int_{\K^k}\det\left(\msf K^\sigma(u_j,u_\ell)\right)_{j,\ell=1}^k \prod_{j=1}^k h(u_j)\dd \mu(u_j)=\det(\itid-\sqrt h\mathbf K^\sigma  \sqrt h)_{L^2(\mu)}.
\end{equation}
If $\sigma=0$, the deformed point process $\mathcal X^\sigma$ is of course equal to the undeformed $\mathcal X$.

{\rev If we start with $\mathcal X_n$ and assume $\mathcal X_n$ converges to a limit determinantal point process $\mathcal X$ as $n\to\infty$}, then what can we say about the large $n$ limit of $\mathcal X_n^{\sigma_n}$? From the perspective of universality, a natural rephrasal of this question would be: how far can we deform $\mcal X_n$ to a new process $\mcal X_n^{\sigma_n}$ while still preserving the same limiting process $\mcal X$? 

As our results will show, if we deform a biorthogonal ensemble on microscopic scales, for instance by taking $\sigma_n$ independent of $n$ in \eqref{def:Wndeformed}, then we lose universality of the limit process: under appropriate but mild conditions, we will actually prove that the deformed process $\mcal X_n^{\sigma_n}$ converges to the deformation $\mcal X^\sigma$ of the original limit process $\mcal X$ driven by the symbol $\sigma=\lim_{n\to\infty} \sigma_n$. 
This is still a natural phenomenon: when we deform the model on microscopic scales, we modify the limit point process, but the new limit point process is simply the corresponding deformation of the original limit point process.
From another perspective, our results drastically enlarge the class of possible limit kernels arising in biorthogonal ensembles, including in particular the classical sine, Airy, and Bessel point processes, by adding to it all $\sigma$-deformations of existing limit point processes. When the deformation takes place on sub-microscopic scales, we will show that the limit point process is not deformed, see Remark \ref{remark:submicroscopic}.
We will prove our results for a wide variety of biorthogonal ensembles under rather mild conditions, without assuming much regularity of the deformation symbol $\sigma_n$, and they will yield {\rev asymptotic results} of the deformed ensembles basically as consequences of the known universality results of the corresponding undeformed ensembles.

{\rev 

\begin{remark}
Let us also mention that the kernels $\msf K^\sigma$ are intimately connected to the theory of {\em integrable kernels} \cite{IIKS}.
These are kernels which can be written in the form
\[
\msf K(u,v)=\frac{\sum_{j=1}^kg_j(u)h_j(v)}{u-v},\qquad \text{with}\quad  \sum_{j=1}^kg_j(u)h_j(u)=0,
\]
like the sine and Airy kernels, as well as all other known limit kernels of orthogonal polynomial ensembles (with $k=2$).
If the kernel $\msf K$ is integrable, then the deformed kernel $\msf K^\sigma$ is also integrable (with the same value of $k$). Moreover, $\msf K^\sigma$ can then be characterized in terms of a $k\times k$ matrix-valued Riemann-Hilbert problem, which makes them amenable to asymptotic analysis, and which allows to study associated integrable differential equations, see \cite{ClaeysGlesner2021}. For the deformations of the sine, Airy, and Bessel kernels, the underlying differential equations are well understood \cite{CafassoClaeysRuzza2021, GhosalSilva22, ClaeysTarricone24, Ruzza24}, as well as the asymptotic behavior in certain regimes.
\end{remark}
}

\subsection*{Convergence theorem for non-varying measures}\hfill

The general setting of our main results starts with a sequence of $n$-point biorthogonal ensembles $\mathcal X_n$, $n\in\mathbb N$, of the form \eqref{def:BiOEscaled} with a reference measure $\mu_n$, kernels $\msf K_n$, and a sequence of deformations $\mathcal X_n^{\sigma_n}$ of the form \eqref{def:deform}--\eqref{def:Wndeformed}, induced by a sequence of functions $\sigma_n$, $n\in\mathbb N$. Our main result will ask for convergence of the original point process $\mcal X_n$ to a limiting process $\mcal X$ in a suitable sense, in order to conclude convergence of the deformed ensemble $\mcal X_n^{\sigma_n}$ towards a deformation $\mcal X^{\sigma}$ of the limiting process $\mcal X$. 

We will need suitable assumptions on $\mu_n$, $\sigma_n$ and $\msf K_n$, and we first consider the case of a non-varying measure $\mu_n=\mu$. Loosely speaking, in this case we need (1) that the deformed ensembles are well-defined, (2) that $\msf K_n$ and $\sigma_n$ converge point-wise to a limit kernel $\msf K$ and a function $\sigma$, and (3) a rough domination bound for the kernels $\msf K_n$.

\begin{assumption}\label{assumption:sigma} Let $\mu_n=\mu$ be a Radon measure on $\K$ which is independent of $n$. The kernels $\msf K_n:\K^2\to\mathbb C$ {\rev as in \eqref{def:rescaled kernel}} and the functions $\sigma_n:\K\to [0,1]$ are such that the following conditions hold.
\begin{enumerate}
\item 
The deformed ensemble \eqref{def:deform} is well-defined, i.e.\ $\mathcal G_n[\sigma_n]>0$ for every $n\in\mathbb N$.
\item
There exist a determinantal point process $\mathcal X$ on $\K$ with kernel $\msf K$ and probability generating functional $\mathcal G$, and a function $\sigma:\K\to[0,1]$ for which $\mathcal G[\sigma]\neq 0$, such that for $\mu$-a.e. $u,v\in\K$, we have the {\it point-wise} limits
\[
\lim_{n\to\infty}{\msf K}_n(u,v)={\msf K}(u,v)\quad\mbox{and} \lim_{n\to\infty}\sigma_n(u)=\sigma(u),\quad \text{for }\mu\text{-a.e. } u,v\in \K .
\]
\item For every bounded $F\subset\K$, there exist $\Phi,\Psi\in L^2(\mu)$, possibly depending on $F$, such that
$$
\sqrt{\itid_F(u)+\sigma_n(u)}|{\msf K}_n(u,v)|\sqrt{\itid_F(v)+\sigma_n(v)}\leq \Phi(u)\Psi(v),\quad \text{for }\mu\text{-a.e. } u,v\in \K  \text{ and every }n,
$$
{\rev where $\itid_F$ is the indicator function of the set $F$.}
\end{enumerate}\end{assumption}

The following is a general convergence result for deformed biorthogonal ensembles.
\begin{theorem}\label{thm:biOPE}
Let  $(\mathcal X_n)_{n\in\mathbb N}$ be a sequence of biorthogonal ensembles of the {\rev form \eqref{def:BiOEscaled}} with corresponding sequence of correlation kernels $({\msf K}_n)_{n\in \N}$. Let $(\sigma_n)_{n\in\mathbb N}$ be a sequence of Borel measurable functions $\sigma_n:\mb K \to[0,1]$. If Assumption \ref{assumption:sigma} holds,
then the limit
\begin{equation}\label{eq:scalingGdeformed}
\lim_{n\to\infty}\mathcal G_n^{\sigma_n}[h]=\mathcal G^\sigma[h],
\end{equation}
holds true for any continuous function $h:\mb K \to [0,+\infty)$ with bounded support.
In particular, the deformed biorthogonal ensemble $\mathcal X_n^{\sigma_n}$ converges weakly as $n\to\infty$ to the deformed limit point process $\mathcal X^{\sigma}$.
\end{theorem}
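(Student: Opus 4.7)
The plan is to reduce the convergence of the deformed generating functionals to a convergence statement for the undeformed ones via the marking--conditioning identity, and then to establish the latter through dominated convergence in the Fredholm series.

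First, the very definition of $\mathcal G_n^{\sigma_n}$ combined with the pointwise identity $(1-h(u))(1-\sigma_n(u))=1-[\sigma_n+(1-\sigma_n)h](u)$ (and likewise in the limit) gives
$$
\mathcal G_n^{\sigma_n}[h]=\frac{\mathcal G_n[\sigma_n+(1-\sigma_n)h]}{\mathcal G_n[\sigma_n]},\qquad \mathcal G^{\sigma}[h]=\frac{\mathcal G[\sigma+(1-\sigma)h]}{\mathcal G[\sigma]}.
$$
Since $\mathcal G_n[\sigma_n]>0$ and $\mathcal G[\sigma]\neq 0$ by Assumption \ref{assumption:sigma}, the theorem reduces to showing $\mathcal G_n[f_n]\to\mathcal G[f]$ whenever $(f_n,f)$ is either $(\sigma_n,\sigma)$ or $(\sigma_n+(1-\sigma_n)h,\sigma+(1-\sigma)h)$.

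To establish this, I would expand each $\mathcal G_n[f_n]$ via the Fredholm series
$$
\mathcal G_n[f_n]=\sum_{k=0}^\infty\frac{(-1)^k}{k!}\int_{\K^k}\det\bigl[\sqrt{f_n(u_i)}\,{\msf K}_n(u_i,u_j)\sqrt{f_n(u_j)}\bigr]_{i,j=1}^k\prod_{i=1}^k\dd\mu(u_i),
$$
and take the limit $n\to\infty$ term by term. Pointwise a.e.\ convergence of the integrand follows from Assumption \ref{assumption:sigma}(2) and continuity of the determinant in its entries. For the domination, I would fix a bounded $F$ containing $\supp h$ (any bounded $F$ works in the case $f_n=\sigma_n$). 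Since $f_n\leq\sigma_n+\|h\|_\infty\itid_F$, Assumption \ref{assumption:sigma}(3) produces
$$
\sqrt{f_n(u)}\,|{\msf K}_n(u,v)|\sqrt{f_n(v)}\leq C\,\Phi(u)\Psi(v),
$$
uniformly in $n$, with $C$ depending only on $\|h\|_\infty$. Writing the $k\times k$ matrix above as $\diag(\Phi(u_i))\,B_n\,\diag(\Psi(u_j))$ with entries $|(B_n)_{ij}|\leq C$, Hadamard's inequality bounds its determinant by $C^k k^{k/2}\prod_{i=1}^k\Phi(u_i)\Psi(u_i)$, so the $k$-th term of the series is dominated, uniformly in $n$, by $\tfrac{C^k k^{k/2}}{k!}(\int\Phi\Psi\,\dd\mu)^k$; this is summable in $k$ by Stirling, and $\Phi\Psi\in L^1(\mu)$ by Cauchy--Schwarz.

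Two successive applications of dominated convergence, one inside each $k$-fold integral and one for the alternating sum in $k$, then yield $\mathcal G_n[f_n]\to\mathcal G[f]$ and hence \eqref{eq:scalingGdeformed}. The weak convergence $\mathcal X_n^{\sigma_n}\stackrel{\ast}{\to}\mathcal X^{\sigma}$ follows from the characterization of weak convergence of point processes via pointwise convergence of probability generating functionals recalled before the statement. The main point requiring care is the interplay between the form of Assumption \ref{assumption:sigma}(3), phrased in terms of $\itid_F+\sigma_n$, and the two particular choices $f_n\in\{\sigma_n,\,\sigma_n+(1-\sigma_n)h\}$ that emerge from the marking--conditioning identity; once the uniform algebraic domination $f_n\leq\sigma_n+\|h\|_\infty\itid_F$ is extracted, everything else is a standard Fredholm-series limit.
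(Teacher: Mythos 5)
Your proof is correct and follows essentially the same strategy as the paper's: reduce to the ratio identity $\mathcal G_n^{\sigma_n}[h]=\mathcal G_n[\sigma_n+(1-\sigma_n)h]/\mathcal G_n[\sigma_n]$, then prove term-by-term convergence of the Fredholm series via the normalized-matrix trick, Hadamard's inequality, and dominated convergence (both in each $k$-fold integral and across the series in $k$). The one genuine difference is that you obtain the ratio identity directly from the definition of $\mathcal G_n^{\sigma_n}$ as an $n$-fold integral, exploiting the pointwise identity $(1-h)(1-\sigma_n)=1-(\sigma_n+(1-\sigma_n)h)$; the paper instead first identifies the deformed correlation kernel $\mathbf K_n^{\sigma_n}=\sqrt{1-\sigma_n}\,\mathbf K_n(\itid-\sigma_n\mathbf K_n)^{-1}\sqrt{1-\sigma_n}$ (their Lemma 4.1) and then manipulates Fredholm determinants (their Lemma 4.2) to reach the same ratio. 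Your route is lighter for this particular theorem, since the explicit deformed kernel is not needed here. One small point worth spelling out: your parenthetical ``likewise in the limit'' for $\mathcal G^{\sigma}[h]=\mathcal G[\sigma+(1-\sigma)h]/\mathcal G[\sigma]$ is not quite ``likewise,'' since $\mathcal X$ is an infinite-particle process rather than an $n$-fold integral; it does hold, but by the marking-and-conditioning definition of $\mathcal X^{\sigma}$ --- conditioning on the all-marks-zero event inserts the factor $\prod_j(1-\sigma(u_j))$ and produces exactly that ratio --- which is the content the paper offloads to [ClaeysGlesner2021, Theorem 2.4(2)]. With that gloss, the argument is complete.
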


Theorem~\ref{thm:biOPE} is proven in Section~\ref{sec:proofB}.
\begin{remark}\label{remark:weakcvg}
The simplest case of the above result occurs when $\sigma_n\equiv 0$, which means that we do not deform the biorthogonal ensembles $\mathcal X_n$. 
In this case, Assumption \ref{assumption:sigma} only means dominated convergence of the kernels $\msf K_n$ to $\msf K$.
Even if this is not the most interesting situation, it is still worth to mention that Theorem \ref{thm:biOPE} then gives a practical sufficient condition to deduce weak convergence of the point processes $\mathcal X_n$ to $\mathcal X$ from suitable convergence of the correlation kernels $\msf K_n$ to $\msf K$.
Note moreover that, if the kernels $\msf K_n(u,v)$ are continuous and the convergence of $\msf K_n$ to $\msf K$ is uniform on compacts, then Assumption \ref{assumption:sigma} is always valid for $\sigma_n\equiv 0$. 
\end{remark}

\begin{remark}\label{remark:submicroscopic}
Another simplified version of Theorem \ref{thm:biOPE} occurs when the sequence $\sigma_n$ is not identically $0$, but the limit function $\sigma$ is identically zero. This happens for instance if we set $\sigma_n(u)=h(n^\epsilon u)$ for some $\epsilon>0$ and for some function $h$ decaying to $0$ at infinity, which means that the deformation of the biorthogonal ensemble takes place on sub-microscopic scales.
Then, if Assumption \ref{assumption:sigma} holds, Theorem \ref{thm:biOPE} states that the deformed biorthogonal ensembles $\mathcal X_n^{\sigma_n}$ converge weakly to the undeformed limit point process $\mathcal X$: as announced informally before, limit point processes are insensitive to sub-microscopic deformations.
\end{remark}

We view Theorem~\ref{thm:biOPE} as a perturbative tool, close to the spirit of universality: once the convergence of the original kernel $\msf K_n$ is established, we can extend the convergence of the ground point processes $(\mcal X_n)$ to their deformed versions $(\mcal X_n^{\sigma_n})$ after a routine check
of Assumption \ref{assumption:sigma}. Unlike the traditional approach through an asymptotic analysis of the correlation kernels, our approach does not require a separate analysis of the deformed ensembles. In this direction, in Section~\ref{sec:applic} we will apply Theorem~\ref{thm:biOPE} to several concrete situations, extending known convergence results of the original sequences $(\mcal X_n)$ to their deformed versions $(\mcal X_n^{\sigma_n})$, for large families of deformations $\sigma_n$. Notably, thanks to Theorem~\ref{thm:biOPE} we will provide a characterization of the correlation kernel of various different conditional-thinned deformations of the sine and Airy point processes in terms of integrable systems previously studied in the literature in depth, see Remark~\ref{remark:intsysdefkernels} later on. 


\subsection*{Convergence theorem for varying measures}\hfill \\
There is an analogue to Theorem~\ref{thm:biOPE} for varying measures $\mu_n$, which in particular will allow us to consider biorthogonal ensembles over lattices. In addition to the previous assumptions on $\msf K_n$ and $\sigma_n$, we need to assume a suitable type of convergence of the varying measures $\mu_n$ to a measure $\mu$. In the assumption below and in what follows, for functions $f,g$ we use the notation $(f\otimes g)(x,y)=f(x)g(y)$, and denote by $\mu\otimes\lambda$ the product measure on $\mathbb K^2$ generated by two measures $\mu$ and $\lambda$.

\begin{assumption}\label{assumption:sigmavarying} 
The sequence of Radon measures $(\mu_n)$, the kernels $\msf K_n:\K^2\to\mathbb C$ {\rev as in \eqref{def:rescaled kernel}}, and the functions $\sigma_n:\K\to [0,1]$ are such that the following hold.
\begin{enumerate}[(1)]
\item The deformed ensemble \eqref{def:deform} is well-defined, i.e.\ $\mathcal G_n[\sigma_n]>0$ for every $n\in\mathbb N$.
\item There exist a determinantal point process $\mathcal X$ on $\K$ with {\em continuous} kernel $\msf K$ and associated probability generating functional $\mathcal G$, and a {\it continuous} function $\sigma:\K\to [0,1]$, such that for any compact $F\subset \K $, the limits
$$
\lim_{n\to\infty}\|(\itid_F\otimes \itid_F)({\msf K}_n-{\msf K})\|_{L^\infty(\mu_n\otimes \mu_n)}=0 \qquad\mbox{and} \qquad \lim_{n\to\infty}\|\itid_F(\sqrt{\sigma_n}-\sqrt{\sigma})\|_{L^\infty(\mu_n)}=0
$$
are valid.

\item There is a Radon measure $\mu$ over $\K $ for which for every bounded $F\subset\K $, there exist {\it continuous} functions $\Phi,\Psi \in L^2(\mu)\cap L^2(\mu_n)$, possibly depending on $F$ but independent of $n$, such that
$$
\sqrt{\itid_F(u)+\sigma_n(u)}|{\msf K}_n(u,v)|\sqrt{\itid_F(v)+\sigma_n(v)}\leq \Phi(u)\Psi(v),
$$
for $\mu_n$-a.e. $u,v\in\K $ and for every $n$. Furthermore, the norms 
$$
\|\Phi\|_{L^2(\mu_n)}, \|\Phi\|_{L^\infty(\mu_n)}, \|\Psi\|_{L^2(\mu_n)} \quad \text{and}\quad \|\Psi\|_{L^\infty(\mu_n)}
$$ 
remain bounded as $n\to \infty$, and for these same functions the weak convergence of measures
$$
\Phi(x)\Psi(x) \dd\mu_n(x) \stackrel{*}{\to}\Phi(x)\Psi(x)\dd\mu(x) \text{ as }N\to \infty
$$
takes place.
\end{enumerate}
\end{assumption}
In Assumption~\ref{assumption:sigmavarying}, the measures $\mu_n$ and $\mu$ need not be finite, so it is not suitable to talk about their weak convergence directly. However, since $\Phi,\Psi\in L^2(\mu)\cap L^2(\mu_n)$, each of the measures $\Phi\Psi\dd\mu$ and $\Phi\Psi\dd\mu_n$ is finite, and the weak convergence from Assumption~\ref{assumption:sigmavarying} (3) makes sense.

We emphasize that Assumption~\ref{assumption:sigmavarying} (2)--(3) requires an understanding of the kernel $\msf K_n(u,v)$ for $u,v$ in the varying set $\supp\mu_n$. In concrete discrete models, the $n$-dependent kernel $\msf K_n$ is naturally defined only on $\supp\mu_n$, but not on the whole space, and its limit $\msf K$ admits a continuous extension to the whole space $\K $. This point will be clarified in the concrete example of discrete Coulomb gases discussed in Section~\ref{sec:discreteCoulomb}, and explains why the pointwise convergence and bounds in Assumption~\ref{assumption:sigma} (2)--(3) are naturally replaced with Assumption~\ref{assumption:sigmavarying} (2).

The next result is the previously announced analogue of Theorem~\ref{thm:biOPE} for varying measures.

\begin{theorem}\label{thm:biOPE-varying}
Let  $(\mathcal X_n)_{n\in\mathbb N}$ be a sequence of biorthogonal ensembles of the form \eqref{def:wtP} with correlation kernels ${\msf K}_n$ with a varying reference measure $\dd\mu_n$. Let $(\sigma_n)_{n\in\mathbb N}$ be a sequence of Borel measurable functions $\sigma_n:\K \to[0,1]$. If Assumption \ref{assumption:sigmavarying} holds, then we have the limit
\begin{equation}\label{eq:scalingGdeformed2}
\lim_{n\to\infty}\mathcal G_n^{\sigma_n}[h]=\mathcal G^\sigma[h]
\end{equation}
for any continuous function $h:\K \to [0,+\infty)$ with bounded support.
In particular, the deformed biorthogonal ensemble $\mathcal X_n^{\sigma_n}$ converges weakly as $n\to\infty$ to the deformed limit point process $\mathcal X^{\sigma}$.
\end{theorem}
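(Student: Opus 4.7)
My plan is to adapt the strategy underlying Theorem~\ref{thm:biOPE} to the varying-measure setting. The natural starting point is the quotient identity
$$
\mathcal G_n^{\sigma_n}[h]=\frac{\mathcal G_n[h+\sigma_n-h\sigma_n]}{\mathcal G_n[\sigma_n]},\qquad \mathcal G^{\sigma}[h]=\frac{\mathcal G[h+\sigma-h\sigma]}{\mathcal G[\sigma]},
$$
which follows directly from the marking-and-conditioning interpretation of the deformation recalled above. This reduces the theorem to proving the single statement $\mathcal G_n[\phi_n]\to \mathcal G[\phi]$, applied in turn to the two choices $\phi_n\in\{\sigma_n,\,h+\sigma_n-h\sigma_n\}$; in both cases Assumption~\ref{assumption:sigmavarying}(2) together with continuity of $h$ and $\sigma$ gives $\phi_n\to\phi$ uniformly on compact sets, with $\|\phi_n\|_\infty\le 1+\|h\|_\infty$ uniformly in $n$.

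I would then expand both sides as Fredholm series,
$$
\mathcal G_n[\phi_n]=\sum_{k=0}^{\infty}\frac{(-1)^k}{k!}\int_{\K^k}\det\bigl(B_n(x_i,x_j)\bigr)_{i,j=1}^{k}\prod_{i=1}^{k}d\mu_n(x_i),\qquad B_n(x,y)=\sqrt{\phi_n(x)\phi_n(y)}\,\msf K_n(x,y),
$$
and analogously for $\mathcal G[\phi]$. The bound $\sqrt{\phi_n(u)}\le \sqrt{(1+\|h\|_\infty)(\itid_F(u)+\sigma_n(u))}$, valid for any compact $F\supseteq\supp h$, combined with Assumption~\ref{assumption:sigmavarying}(3), yields a pointwise estimate $|B_n(x,y)|\le C\,\Phi(x)\Psi(y)$ uniform in $n$; Hadamard's inequality and the uniform $L^2$- and $L^\infty$-bounds on $\Phi,\Psi$ convert this into an $n$-uniform summable majorant for the Fredholm series. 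This reduces the problem to proving termwise convergence
$$
\int_{\K^k}\det\bigl(B_n(x_i,x_j)\bigr)\,d\mu_n^k\;\longrightarrow\;\int_{\K^k}\det\bigl(B(x_i,x_j)\bigr)\,d\mu^k\qquad (n\to\infty),
$$
for each fixed $k$.

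To prove each such termwise limit I would split the integral at a large compact box $F_R^k\subset\K^k$. On $F_R^k$, the integrand $\det(B_n(x_i,x_j))$ converges uniformly, by Assumption~\ref{assumption:sigmavarying}(2), continuity of $\msf K,\sigma$, and multilinearity of the determinant. Dividing by $\prod_i\Phi(x_i)\Psi(x_i)$ turns the integrand into a family of uniformly bounded continuous functions converging uniformly, to be integrated against the product measures $\prod_i\Phi(x_i)\Psi(x_i)\,d\mu_n(x_i)$, which converge weakly to their $d\mu$-analogues by Assumption~\ref{assumption:sigmavarying}(3); a Portmanteau-type argument then passes to the limit on $F_R^k$. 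The tail integral over $\K^k\setminus F_R^k$ is controlled uniformly in $n$ by the same Hadamard-type bound together with the uniform $L^2(\mu_n)$-norms of $\Phi,\Psi$, and vanishes as $R\to\infty$.

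The principal obstacle is precisely the varying nature of the reference measures $\mu_n$: since the operators $\sqrt{\phi_n}\mathbf K_n\sqrt{\phi_n}$ live on distinct Hilbert spaces $L^2(\mu_n)$, no direct trace-norm convergence argument is available, and one must work at the level of integral kernels. The weak convergence $\Phi\Psi\,d\mu_n\stackrel{*}{\to}\Phi\Psi\,d\mu$ in Assumption~\ref{assumption:sigmavarying}(3) is exactly the mechanism that substitutes for the dominated convergence available in Theorem~\ref{thm:biOPE}, promoting uniform-on-compacts convergence of the kernels into honest convergence of their integrals against varying measures.
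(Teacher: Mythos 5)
Your overall plan follows the paper's proof quite closely: Lemma~\ref{lemma:GB} gives exactly the quotient identity you start from, and the reduction to Fredholm-series convergence with a Hadamard-type summable majorant and termwise convergence is the content of Lemmas~\ref{lemma:S(B)} and~\ref{lemma:SB2}. Your organization of the termwise step as a cutoff on $F_R^k$ followed by $R\to\infty$ is a minor re-arrangement of the same mechanism; the paper folds the cutoff into a single auxiliary lemma (Lemma~\ref{lem:weakconvergence}).

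There is, however, a genuine gap in the step where you pass to the limit on $F_R^k$. You assert that dividing by $\prod_i\Phi(x_i)\Psi(x_i)$ produces ``a family of uniformly bounded \emph{continuous} functions converging uniformly,'' and then invoke a Portmanteau argument. But Assumption~\ref{assumption:sigmavarying} does \emph{not} give continuity of $\msf K_n$ or $\sigma_n$ — only the limiting objects $\msf K$, $\sigma$, $\Phi$, $\Psi$ are assumed continuous, and in the motivating discrete examples $\msf K_n$, $\sigma_n$ are defined only on $\supp\mu_n$. Consequently the $n$-level integrand $f_n=\det(B_n)/\prod\Phi\Psi$ is not continuous, and the ``uniform convergence'' available from Assumption~\ref{assumption:sigmavarying}(2) is convergence in $L^\infty(\mu_n)$-norm on compacts, not uniform convergence in the usual sense. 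A naive Portmanteau argument (continuous test function against weakly convergent measures) therefore does not apply to $\int f_n\,d\nu_n$ directly. The fix, which is precisely what the paper's Lemma~\ref{lem:weakconvergence}(i) encapsulates, is to write $\int f_n\,d\nu_n=\int(f_n-f)\,d\nu_n+\int f\,d\nu_n$: the first term is controlled using $\|\itid_B(f_n-f)\|_{L^\infty(\nu_n)}\to0$ and the uniform boundedness of $\nu_n(\K^k)$, and only the second term requires continuity — of the \emph{limit} $f$, not of $f_n$. Your proposal gestures at this but must state it explicitly to be correct.

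A second, more minor omission: when dividing by $\prod\Phi\Psi$ you must handle the set where $\Phi\Psi$ vanishes. Since by Assumption~\ref{assumption:sigmavarying}(3) the integrands vanish wherever some $\Phi(x_j)\Psi(x_j)=0$, one can restrict all integrals to the open set $G=\{\prod_j\Phi(x_j)\Psi(x_j)>0\}$; passing the weak convergence of $\prod\Phi\Psi\,d\mu_n$ to its restriction to $G$ requires the boundary of $G$ to have zero limiting measure, which is automatic here but does need a short argument (Lemma~\ref{lem:weakconvergence}(ii) in the paper). You also need to choose $F_R$ with $\nu(\partial F_R^k)=0$ for the cutoff step, which is possible but should be said.
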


We emphasize that Assumption~\ref{assumption:sigmavarying} asks for the continuity solely of the limiting functions $\msf K$ and $\sigma$, but not of $\msf K_n,\msf \sigma_n$. Assumption~\ref{assumption:sigmavarying} involves conditions on $\msf K_n,\sigma_n$ that must be verified on $\supp\mu_n$ but not on the whole space. The limiting point processes $\mcal X$ and $\mcal X^\sigma$ are determinantal with respect to the kernel $\msf K$ as acting on $L^2(\mu)$. With these observations in mind, functions $\sigma_n,\msf K_n$ need to be defined only $\mu_n$-a.e., and the continuity of $\msf K,\sigma,\Psi,\Phi$ on the whole space $\K $ can be replaced by the continuity of these functions on a neighborhood of $\supp\mu$, and Theorem~\ref{thm:biOPE-varying} still remains valid under such conditions.

Theorem~\ref{thm:biOPE-varying} is proven in Section~\ref{sec:proofvarying}.
It is particularly useful when dealing with discrete biorthogonal ensembles, for which the underlying state spaces are scaled with $n$ to a continuum limit. We discuss some applications in this direction in Section~\ref{sec:discreteCoulomb}.

\section{Applications of the general results}\label{sec:applic}

We now illustrate how our main theorems apply in several models of interest.

\subsection{Bulk deformations of orthogonal polynomial ensembles}\hfill

Consider orthogonal polynomial ensembles \eqref{def:OPE} with varying measure $w_n(x)=\ee^{-nV(x)}$.
The correlation kernel $\msf k_n$ can then be taken of the form \eqref{def:kernel}, with $\phi_k=\psi_k$ being the normalized orthogonal polynomial of degree $k-1$ with respect to the weight $w_n(x)$ on $\mathbb R$.
%
%

%
%
Let us assume that 
\begin{equation}\label{eq:conditionsOPintro}
V \text{ is of class $C^2$ and } \lim_{x\to \pm\infty} V(x)/\log(1+x^2)=+\infty.
\end{equation}
These rather weak regularity conditions ensure that we work under the framework developed by Lubinsky \cite{Lubinsky09, Lubinsky08} and further extended together with Levin \cite{LevinLubinsky11, LevinLubinsky08} for the study of universality. We now collect some known results for these orthogonal polynomial ensembles that we will need.

A first result is concerned with the global limit density of the ensemble: there exists an {\em equilibrium density} $\kappa_V:\mathbb R\to [0,+\infty)$ supported on a finite number of compact intervals, for which
\begin{equation}\label{eq:eqdens}
\lim_{n\to\infty}\frac{1}{n}\msf k_n(x,x)=\kappa_V(x),
\end{equation}
uniformly for $x$ in compact subsets of the interior of the support of $\kappa_V$. The associated measure $\kappa_V(x)\dd x$ is known as the equilibrium measure, as it is the solution of a minimization problem from logarithmic potential theory \cite[Chapter 6]{deift_book}. The convergence \eqref{eq:eqdens} has been proved in the literature under various different settings and forms. If $V$ is real analytic the limit \eqref{eq:eqdens} holds uniformly for $x\in \R$ as inferred from \cite{Deiftetal1999}, whereas under the weaker regularity we are assuming this result is uniform on compact subsets of the interior of $\supp \kappa_V$ as can be seen from \cite{Totik00}. 

A second result is concerned with local correlations between eigenvalues. Let us fix a reference point $x^*\in\mathbb R$ for which $\kappa_V(x^*)>0$; such a point is referred to as a {\em bulk point}. Consider the scaling {\rev $x=x^*+\frac{u}{\kappa_V(x^*)n}$} and the corresponding scaled kernel
\begin{equation}
\msf K_n(u,v)\deff \frac{1}{\kappa_V(x^*)n}\msf k_n\left(x^*+\frac{u}{\kappa_V(x^*)n},x^*+\frac{v}{\kappa_V(x^*)n}\right),
\end{equation}
i.e., the kernel defined in \eqref{def:rescaled kernel} with $c=\kappa_V(x^*)$ and $\gamma=1$.
Then, the following sine kernel limit is a classical example of bulk universality:
\begin{equation}\label{eq:sinelimit}
\lim_{n\to\infty}\msf K_n(u,v)=\msf K_{\sin}(u,v)\deff \frac{\sin(\pi(u-v))}{\pi(u-v)},
\end{equation}
point-wise for $u,v\in\mathbb R$. {\rev This limit was proven by Pastur and Shcherbina in \cite{pastur_shcherbina_universality} under weak conditions on $V$, and shortly later established using Riemann-Hilbert methods (when $V$ is real analytic) by Deift and collaborators using Riemann-Hilbert methods \cite{deift_kriecherbauer_mclaughlin, deift_book, Deiftetal1999, Deiftetal99b}}. The general result under {\rev the weaker condition \eqref{eq:conditionsVhmm} below was proved by Levin and Lubinsky in \cite{LevinLubinsky08}.} The limit kernel $\msf K_{\sin}$ is the sine kernel, and it defines a determinantal point process, the sine point process, which we denote as $\mathcal X_{\sin}$.

Finally, we need a rough uniform bound for the kernel on the diagonal, or in other words a bound on the one-point correlation function: there exists a constant $C>0$ for which the inequality
\begin{equation}\label{eq:globalestknbulk}
\left|\frac{1}{\kappa_V(x^*)n}\msf k_n\left(x^*+\frac{u}{\kappa_V(x^*)n}, x^*+\frac{u}{\kappa_V(x^*)n}\right)\right| \leq C
\end{equation}
holds true for any $u\in \R$ and any $n$. For $u$ in compacts, this follows from \cite[Equation~(4.13)]{LevinLubinsky08}. It extends to the full real line with the help of the so-called restricted range inequalities, for instance with an application of \cite[Chapter~III, Theorem~2.1]{Saff_book} - {\rev and following their notation} - for the weight $w=\ee^{-V/2}$ and polynomial $P_{2n}=\sum_{k=0}^{n-1}\phi_{k}^2$, for which $P_{2n}(x)w(x)^{2n}=\msf k_n(x,x)$, compare with \eqref{def:kernel}.

Let us now introduce a suitable deformation of this ensemble. 
We take an integrable function $f:\mathbb R\to [0,+\infty)$ 
which is such that $f(x)=O(|x|^{-\eta})$ as $x\to \pm\infty$, for some $\eta>1$.
Define $\sigma_n$ by the equation
\begin{equation}\label{def:sigmaOPE}
1-\sigma_{n}(u)=\ee^{-f(n^tu)},\qquad t\geq 0.
\end{equation}
The deformation induced by $\sigma_n$ lives on microscopic scales if $t=0$ and on sub-microscopic scales if $t>0$.
Since $1-e^{-y}\leq y$ for all $y\in\mathbb R$, we can bound
\begin{align}
\sigma_n(u) & =1-\ee^{-f(n^tu)}\leq f(n^t u)\nonumber \\ 
& \leq C\itid_{[-M,M]}(u)+\frac{C}{|u|^{\eta}} \itid_{\mathbb R\setminus[-M,M]}(u)\revdeff H(u), \label{eq:boundsigmansine}
\end{align}
for some constants $C,M>0$. Observe that $H:\mathbb R\to\mathbb R$ is an integrable function independent of $n$.

We proceed by proving that Assumption \ref{assumption:sigma} is valid in this setting.
Condition (1) is easily verified, since $1-\sigma_n(u)>0$ for a.e. $u\in\mathbb R$. 
To see that the point-wise limits in part (2) are valid, we recall \eqref{eq:sinelimit}, and we observe that
\begin{equation}\label{eq:sigmalimitsine}
\lim_{n\to\infty}\sigma_n(u)=\sigma(u)\deff \begin{cases}
1-\ee^{-f(u)},&t=0,\ u\in\mathbb R,\\
0,&t>0,\ u\in\mathbb R\setminus\{0\}.
\end{cases}
\end{equation}
To prove that part (3) of Assumption \ref{assumption:sigma} is valid, we recall \eqref{eq:globalestknbulk}. Using that $\msf K_n$ is the Christoffel-Darboux kernel for orthogonal polynomials, we apply the Cauchy-Schwarz inequality to bound $\msf K_n(u,v)$ in terms of its diagonal values, and obtain
\begin{equation}\label{eq:CS01}
|\msf K_n(u,v)|\leq \sqrt{\msf K_n(u,u)\msf K_n(v,v)}\leq C.
\end{equation}
By construction of $\sigma_n$, we have for any compact $F\subset\mathbb R$ that
\begin{equation}
\sqrt{\itid_F(u)+\sigma_n(u)}|{\msf K}_n(u,v)|\sqrt{\itid_F(v)+\sigma_n(v)}\leq C\sqrt{\itid_F(u)+H(u)}\sqrt{\itid_F(v)+H(v)},\qquad t\geq 0.
\end{equation}
Since $\Phi(u)=\Psi(u)=\sqrt{C(\itid_F(u)+H(u))}$ is square-integrable, part (3) of Assumption \ref{assumption:sigma} is valid for $t\geq 0$.

Applying Theorem \ref{thm:biOPE}, we now obtain the following result.

\begin{corollary}\label{cor:sine}
For any compactly supported continuous function $h:\R\to [0,+\infty)$, we have the limits
\[
\lim_{n\to\infty}\mathcal G_n^{\sigma_n}[h]=\begin{cases}
\mathcal G_{\sin}^{\sigma}[h],&t=0,\\
\mathcal G_{\sin}[h],&t>0.\end{cases}
\]
 where $\mathcal G_{\sin}$ and $\mathcal G_{\sin}^\sigma$ are the probability generating functionals of the sine point process and its $\sigma$-deformation,
\[
\mathcal G_{\sin}[h]=\det(\itid- \sqrt h\mathbf K_{\sin}\sqrt h),\qquad \mathcal G_{\sin}^{\sigma}[h]=\det(\itid- \sqrt h\mathbf K_{\sin}^\sigma\sqrt h),
\]
with 
\[
\mathbf K_{\sin}^\sigma=\sqrt{1-\sigma}\mathbf K_{\sin}(\itid-\sigma\mathbf K_{\sin})^{-1}\sqrt{1-\sigma}.
\]
In particular, the deformed orthogonal polynomial ensemble $\mathcal X_n^{\sigma_n}$ converges weakly as $n\to\infty$ to the deformed sine point process $\mathcal X_{\sin}^{\sigma}$ if $t=0$, and to the sine point process $\mathcal X_{\sin}$ if $t>0$.
\end{corollary}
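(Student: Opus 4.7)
The plan is to deduce the corollary as a direct application of Theorem~\ref{thm:biOPE}: the three conditions of Assumption~\ref{assumption:sigma} have been established in the discussion preceding the statement, so it only remains to feed the appropriate limit symbol $\sigma$ into the conclusion and to separate the two regimes $t=0$ and $t>0$.

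First I would recap the checks. Condition~(1) is immediate: since $f$ is finite-valued and nonnegative, $\sigma_n(u)=1-\ee^{-f(n^tu)}\in[0,1)$ pointwise, and therefore $\mathcal G_n[\sigma_n]>0$. Condition~(2) combines the classical bulk universality statement \eqref{eq:sinelimit}, which gives the pointwise convergence $\msf K_n(u,v)\to \msf K_{\sin}(u,v)$, with the explicit description \eqref{eq:sigmalimitsine} of the pointwise limit of $\sigma_n$. Condition~(3) combines the diagonal bound \eqref{eq:globalestknbulk} with the Cauchy--Schwarz estimate \eqref{eq:CS01} on the Christoffel--Darboux kernel, which together give $|\msf K_n(u,v)|\le C$ uniformly in $n$, and the $n$-independent integrable envelope $H$ built in \eqref{eq:boundsigmansine}; then $\Phi=\Psi=\sqrt{C(\itid_F+H)}$ lies in $L^2(\R)$ for every bounded $F\subset\R$, discharging the assumption.

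With Assumption~\ref{assumption:sigma} in place, Theorem~\ref{thm:biOPE} yields $\lim_{n\to\infty}\mathcal G_n^{\sigma_n}[h]=\mathcal G_{\sin}^\sigma[h]=\det(\itid-\sqrt h\mathbf K_{\sin}^\sigma\sqrt h)$ for every compactly supported continuous $h:\R\to[0,+\infty)$, where $\mathbf K_{\sin}^\sigma$ is obtained from $\mathbf K_{\sin}$ via the formula \eqref{def:Ksigma}. In the microscopic regime $t=0$, the limit symbol is $\sigma(u)=1-\ee^{-f(u)}$, which produces the non-trivially deformed sine point process. In the sub-microscopic regime $t>0$, the limit symbol vanishes Lebesgue-a.e.\ by \eqref{eq:sigmalimitsine}; substituting $\sigma\equiv 0$ into \eqref{def:Ksigma} reduces $\mathbf K_{\sin}^\sigma$ to $\mathbf K_{\sin}$, so $\mathcal G_{\sin}^\sigma=\mathcal G_{\sin}$, in accordance with Remark~\ref{remark:submicroscopic}. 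The weak convergence of $\mathcal X_n^{\sigma_n}$ to $\mathcal X_{\sin}^\sigma$ (respectively $\mathcal X_{\sin}$) then follows from pointwise convergence of the generating functionals via \cite[Proposition 11.1.VIII]{DaleyVere-JonesVolII}, as recalled around \eqref{def:GnG}.

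There is no real obstacle here, since the heavy lifting --- the bulk universality under the mild conditions \eqref{eq:conditionsOPintro} on $V$, and the uniform-in-$n$ diagonal kernel bound together with its extension to all of $\R$ via the restricted range inequalities --- is imported as a black box from the literature. The one subtlety worth flagging is that the domination in condition~(3) must be uniform in $n$: this is precisely where the $n$-independence of the constant $C$ in \eqref{eq:globalestknbulk} and of the envelope $H$ in \eqref{eq:boundsigmansine} is used, and it is what allows one to bypass an asymptotic analysis of the deformed orthogonal polynomials themselves.
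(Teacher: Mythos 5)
Your proposal is correct and follows essentially the same route as the paper: the verification of Assumption~\ref{assumption:sigma} via \eqref{eq:sinelimit}, \eqref{eq:sigmalimitsine}, \eqref{eq:globalestknbulk}, \eqref{eq:CS01} and the envelope $H$ from \eqref{eq:boundsigmansine}, followed by an invocation of Theorem~\ref{thm:biOPE}, is precisely the argument laid out in the paragraphs preceding the corollary. Your additional note that $\sigma\equiv 0$ a.e.\ for $t>0$ collapses \eqref{def:Ksigma} to $\mathbf K_{\sin}$ correctly recovers the paper's second case.
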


Let us discuss some special cases that appeared previously in the literature. 

\begin{enumerate}[(1)]
\item If $\sigma(u)=\itid_{(-s,s)}$ with $s>0$, the deformation of the orthogonal polynomial ensemble $\mathcal X_n$ consists of restricting the domain of its particles to $\mathbb R\setminus[-s,s]$. Accordingly, the deformed sine point process $\mathcal X_{\sin}^{\itid_{(-s,s)}}$ is the sine point process conditioned on the event that a random point configuration contains no points in $(-s,s)$. The Riemann-Hilbert problem characterizing the kernel $\msf K_{\sin}^{\itid_{(-s,s)}}$ and a connection to the Painlevé V equation were analyzed in the groundbreaking works \cite{deift_its_zhou_riemann_hilbert_random_matrices_integrable_statistical_mechanics} and \cite{JimboMiwaMoriSato}, respectively.
\item If $\sigma(u)=\gamma \itid_{(-s,s)}$ with $s>0$ and $\gamma\in(0,1)$, the deformation of the orthogonal polynomial ensemble $\mathcal X_n$ and of the sine point process are obtained by conditioning on the event that a random thinned point configuration contains no points in $(-s,s)$. Here, the random thinned point configuration is the point configuration obtained after removing each point independently with probability $1-\gamma$.
For $\gamma\in (0,1]$, the limit kernel $\msf K_{\sin}^{\gamma \itid_{(-s,s)}}$ was expressed in terms of the solution of a Riemann-Hilbert problem in \cite{Bothner2015}, and its Fredholm determinant was associated to the Painlev\'e V equation in \cite{MehtaPV}, see also \cite{DysonPV}.
\item If $\sigma$ is of Schwartz class, the limit kernel $\msf K_{\sin}^{\sigma}$ can be expressed in terms of the solution of a Riemann-Hilbert problem
connected to an integro-differential generalization of the Painlev\'e V equation, and is connected to an integrable PDE introduced in \cite{IIKS} which is an isospectral deformation of the Zakharov-Shabat system. {\rev This observation is computed explicitly in \cite{CandidoAlvesChouteau2025}, based on the results from \cite{ClaeysTarricone24}}.
\end{enumerate}

Case (3) corresponds to $\sigma$ as in \eqref{eq:sigmalimitsine}, with $t=0$. Case (2) corresponds to $t=0$ and the choice $f(u)=-\log(1-\gamma \itid_{(-s,s)}(u))$ with $\gamma\in (0,1)$, which is a function of compact support and as such satisfying the decay conditions for $f$.

In turn, Case (1) corresponds to $t=0$ and formally to $f(u)=-\log(1-\itid_{(-s,s)(u)})$, so that $f(u)=+\infty$ on $(-s,s)$. In particular, such $f$ is not integrable. Nevertheless, this case can still be recovered in a similar way as in Corollary~\ref{cor:sine}, simply by setting $\sigma_n(u)=\sigma(u)=H(u)=\itid_{(-s,s)}(u)$ directly, instead of \eqref{def:sigmaOPE}.

\subsection{Edge deformations of orthogonal polynomial ensembles}\hfill

Here, we consider the same class of orthogonal polynomial ensembles as in the previous subsection, namely we take \eqref{def:OPE} with $w_n(x)=\ee^{-nV(x)}$, but now with $V$ satisfying the stronger  condition
\begin{equation}\label{eq:conditionsVhmm}
V \text{ is real analytic, strictly convex, and } \lim_{x\to \pm\infty} V(x)/\log(1+x^2)=+\infty. 
\end{equation}
The real analyticity is needed to apply the results from
\cite{deift_kriecherbauer_mclaughlin, deift_book, Deiftetal1999, Deiftetal99b}; similar results have been obtained under weaker conditions in 
 \cite{LevinLubinsky08} and
\cite{McLaughlinMiller08}, but require more technical assumptions, and for the sake of brevity we choose not to pursue in this direction.
The strict convexity ensures that the support of the equilibrium density $\kappa_V$ is a single interval, which we denote as $[x^-,x^+]$.
Instead of focusing on bulk points, we now fix one of the edge points as a reference point, and for the sake of simplicity, we will henceforth consider the right edge point $x^+$. 
Thanks to the strict convexity of $V$, it is known that $x^+$ is a regular soft edge, meaning that there exists $C>0$ (depending on $V$ and $x^+$) such that
\begin{equation}\label{def:softedge}\kappa_V(x)\sim C\sqrt{x^+-x},\qquad x\to (x^+)_-. \end{equation}
In this situation, the proper scaling \eqref{def:rescaled kernel} is with $\gamma=2/3$ and $c=\left(\frac{\pi}{2}C\right)^{2/3}$ with $C$ as in \eqref{def:softedge}. In other words, we consider the re-scaled orthogonal polynomial ensemble $\mathcal X_n$
with kernel
\[\msf K_n(u,v)=\frac{1}{cn^{2/3}}\msf k_n\left(x^*+\frac{u}{cn^{2/3}}, x^*+\frac{v}{cn^{2/3}}\right).\]
We then have the edge scaling limit
\begin{equation}\label{eq:edgelimit}
\lim_{n\to\infty} \msf K_n(u,v)=\msf K_{\rm Airy}(u,v)\deff 
{
\rev\frac{\ai(u)\ai'(v)-\ai(v)\ai'(u)}{u-v},
}
\end{equation}
uniformly for $u,v$ in compacts of the real line,
which was proved in \cite{Deiftetal1999, Deiftetal99b}.
The limit kernel $\msf K_{\rm Airy}$ is the Airy kernel, and it induces the Airy point process which we denote as $\mcal X_{\rm Airy}$. This limit process is universal, in the sense that it does not depend on $V$, and in fact it appears in a wide range of models outside the realm of random matrix theory, {\rev see e.g. \cite{Konig05,Johansson06}}. 

Like in the bulk, we need also a rough bound for the one-point function $\msf K_n(u,u)$, which must be uniform in $u\in\mathbb R$. It follows also from the asymptotics for $\msf K_n(u,u)$ obtained in \cite{Deiftetal1999, Deiftetal99b} that there exists a constant $C'>0$ such that
\begin{equation}
\label{eq:uniformboundedge}
\msf K_n(u,u)\leq {\rev C'}(\sqrt{|u|+1})\mbox{ for $u<0$,}\qquad \msf K_n(u,u)\leq {\rev C'}\ee^{-V(u)}\mbox{ for } u\geq 0.
\end{equation}
This was not stated explicitly in this form in \cite{Deiftetal1999, Deiftetal99b}, but such estimates can be deduced from the asymptotic analysis of the Riemann-Hilbert problem in \cite{Deiftetal1999, Deiftetal99b}.
More precisely, for $|u|\leq \delta n^{2/3}$ with $\delta>0$ sufficiently small, the estimate follows from the construction of the local Airy parametrix together with the asymptotic behavior of the Airy function and Airy kernel, while for $|u|>\delta n^{2/3}$, it follows from the construction of the outer parametrix.

Similarly to the bulk case, we now introduce a suitable deformation of $\mathcal X_n$. As before, we take a locally integrable function $f:\mathbb R\to [0,+\infty)$ and define $\sigma_n$ by \eqref{def:sigmaOPE}.
The conditions we need to impose on $f$ are different from those in the bulk case, because they are dictated by the (different from the bulk) uniform bound \eqref{eq:uniformboundedge}.
Here, we need $f$ to decay sufficiently fast as $x\to -\infty$ such that $f(x)=O(|x|^{-3/2-\epsilon})$ for some $\epsilon>0$, and we also assume that 
$$
\lim_{x\to +\infty}f(x)=L \quad \text{ exists, possibly with } \quad L=+\infty.
$$
These conditions imply that there exists $M>0$ such that for $n$ sufficiently large and for all $t\geq 0$,
\[
\sigma_n(u)=1-\ee^{-f(n^t u)}\leq \itid_{[-1,+\infty)}(u) + M|u|^{-3/2-\epsilon} \itid_{(-\infty,-1)}(u).
\]
Consequently,
\begin{equation}\label{eq:fundboundAiry}
\sigma_n(u)\msf K_n(u,u)\leq 
C \ee^{-V(u)}\itid_{[-1,+\infty)}(u) + CM|u|^{-1-\epsilon}\itid_{(-\infty,-1)}(u) \revdeff H(u),
\end{equation}
and $H$ is integrable {\rev and independent of n}.

We now prove that Assumption \ref{assumption:sigma} is valid.
Condition (1) is again easily verified, since $1-\sigma_n$ is non-negative and does not vanish identically. 
For part (2), we recall \eqref{eq:edgelimit}, and we observe that
\begin{equation}\label{def:sigmaedge1}
\lim_{n\to\infty}\sigma_n(u)=\sigma(u)\deff 
1-\ee^{-f(u)}\qquad\mbox{ if $t=0$, $u\in\mathbb R$},
\end{equation}
while
\begin{equation}\label{def:sigmaedge2}
\lim_{n\to\infty}\sigma_n(u)=\sigma(u)\deff\begin{cases}
0,&u<0\\
1-\ee^{-f(0)},&u=0\\
1-\ee^{-L},&u>0
\end{cases},\qquad \mbox{if $t>0$.}
\end{equation}
It remains to prove part (3) of Assumption \ref{assumption:sigma}. By the Cauchy-Schwarz inequality (recall for instance the first inequality in \eqref{eq:CS01}), we have for any compact $F\subset\mathbb R$ that
\begin{multline*}
\sqrt{\itid_F(u)+\sigma_n(u)}|{\msf K}_n(u,v)|\sqrt{\itid_F(v)+\sigma_n(v)} \\ 
\leq \sqrt{(\itid_F(u)+\sigma_n(u))\msf K_n(u,u)}\sqrt{(\itid_F(v)+\sigma_n(v))\msf K_n(v,v)}.
\end{multline*}
Moreover, by \eqref{eq:uniformboundedge}, we have
\[(\itid_F(u)+\sigma_n(u))\msf K_n(u,u)\leq 2C\ee^{-V(u)},\qquad u\geq 0,\]
and
\[(\itid_F(u)+\sigma_n(u))\msf K_n(u,u)\leq C\itid_F(u)(1+\sqrt{|u|})+H(u),
\qquad u< 0.\]
We set \[\Phi(u)=\Psi(u)\deff 
\begin{cases}\sqrt{2C\ee^{-V(u)}},&u\geq 0\\
\sqrt{C\itid_F(u)(1+\sqrt{|u|})+H(u)},&u<0,
\end{cases}\]
such that $\Phi,\Psi\in L^2(\mathbb R)$, and part (3) of Assumption \ref{assumption:sigma} is valid.

Applying Theorem \ref{thm:biOPE}, we now obtain the following result.

\begin{corollary}\label{cor:Airy}
For any compactly supported continuous function $h:\R\to [0,+\infty)$, the limit
\[
\lim_{n\to\infty}\mathcal G_n^{\sigma_n}[h]=\mathcal G_{\rm Airy}^{\sigma}[h],
\]
is valid, {\rev where $\mathcal G_{\rm Airy}^\sigma$ is the probability generating functional of the $\sigma$-deformation of the Airy point process,
\[
\mathcal G_{\rm Airy}^{\sigma}[h]=\det(\itid- \sqrt h\mathbf K_{\rm Airy}^\sigma\sqrt h),
\]
}
with
\[
\mathbf K_{\rm Airy}^\sigma=\sqrt{1-\sigma}\mathbf K_{\rm Airy}(\itid-\sigma\mathbf K_{\rm Airy})^{-1}\sqrt{1-\sigma},
\]
and $\sigma$ given by \eqref{def:sigmaedge1}--\eqref{def:sigmaedge2}.
In particular, the deformed orthogonal polynomial ensemble $\mathcal X_n^{\sigma_n}$ converges weakly as $n\to\infty$ to the deformed Airy point process $\mathcal X_{\rm Airy}^{\sigma}$.
\end{corollary}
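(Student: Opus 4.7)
The plan is to deduce Corollary~\ref{cor:Airy} as a direct application of Theorem~\ref{thm:biOPE}, with limit process $\mcal X_{\rm Airy}$ and limit symbol $\sigma$ given by \eqref{def:sigmaedge1}--\eqref{def:sigmaedge2}. The point of the approach is to avoid any fresh asymptotic analysis of the deformed weight $w_n^{\sigma_n}$: by Theorem~\ref{thm:biOPE}, it suffices to verify the three conditions of Assumption~\ref{assumption:sigma} for the undeformed rescaled edge kernel $\msf K_n$ together with the mark $\sigma_n$, using only the edge universality limit \eqref{eq:edgelimit}, the uniform diagonal estimate \eqref{eq:uniformboundedge}, and the decay hypotheses on $f$. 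The weak convergence statement then follows automatically from the characterization of weak convergence of point processes via pointwise convergence of probability generating functionals on continuous compactly supported $h:\K\to[0,1]$ with $\sup h<1$.

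The first two conditions of Assumption~\ref{assumption:sigma} are essentially free. Condition~(1) holds since $1-\sigma_n(u)=\ee^{-f(n^t u)}>0$ a.e., so that $\mathcal G_n[\sigma_n]>0$. Condition~(2) splits into the kernel and the symbol: pointwise convergence $\msf K_n\to \msf K_{\rm Airy}$ is precisely \eqref{eq:edgelimit}; for the symbol, when $t=0$ continuity of $y\mapsto 1-\ee^{-y}$ gives \eqref{def:sigmaedge1}, while for $t>0$ one distinguishes $u<0$, $u=0$, $u>0$ and uses the decay of $f$ at $-\infty$ together with the existence of the limit $\lim_{x\to+\infty}f(x)=L$ to obtain \eqref{def:sigmaedge2}.

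The heart of the argument is condition~(3), which is the only place where the specific hypotheses on $f$ enter quantitatively. Since $\msf K_n$ is a Christoffel--Darboux kernel, the Cauchy--Schwarz inequality reduces the mixed bound to the diagonal:
\[
\sqrt{\itid_F(u)+\sigma_n(u)}\,|\msf K_n(u,v)|\,\sqrt{\itid_F(v)+\sigma_n(v)} \le \Phi(u)\Psi(v),
\]
with $\Phi(u)=\Psi(u)=\sqrt{(\itid_F(u)+\sigma_n(u))\msf K_n(u,u)}$. I would then dominate $(\itid_F+\sigma_n)\msf K_n(u,u)$ uniformly in $n$ in two regimes. For $u\ge 0$, the right half of \eqref{eq:uniformboundedge} yields $2C\,\ee^{-V(u)}$, which is integrable because of the growth of $V$. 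For $u<0$, the left half of \eqref{eq:uniformboundedge} gives the $\sqrt{|u|}$ growth of the diagonal, and the hypothesis $f(x)=O(|x|^{-3/2-\epsilon})$ converts the $\sigma_n\msf K_n(u,u)$ contribution into the integrable tail $H(u)$ of \eqref{eq:fundboundAiry}. These two pieces combine into the envelope $\Phi=\Psi$ already singled out in the discussion preceding the corollary, and one checks that it lies in $L^2(\mathbb R)$ independently of $n$.

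The hard part is really quantitative and confined to the negative half-line: one has to match the $\sqrt{|u|}$ diagonal growth of the Airy kernel against the decay of $\sigma_n$ at $-\infty$, and it is precisely the exponent $3/2$ in the hypothesis on $f$ that makes the product $\sigma_n(u)\msf K_n(u,u)$ integrable. Once this envelope is in hand, Theorem~\ref{thm:biOPE} yields $\mathcal G_n^{\sigma_n}[h]\to \mathcal G_{\rm Airy}^{\sigma}[h]$ for every continuous $h\ge 0$ of compact support, and the stated weak convergence $\mcal X_n^{\sigma_n}\stackrel{*}{\to}\mcal X_{\rm Airy}^{\sigma}$ follows.
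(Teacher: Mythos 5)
Your proposal is correct and follows the same route as the paper: Theorem~\ref{thm:biOPE} applied to the edge-rescaled Christoffel--Darboux kernel, with conditions~(1) and~(2) of Assumption~\ref{assumption:sigma} read off from positivity of $1-\sigma_n$, the Airy edge limit~\eqref{eq:edgelimit}, and the elementary pointwise limits~\eqref{def:sigmaedge1}--\eqref{def:sigmaedge2}, while condition~(3) is reduced to the diagonal by Cauchy--Schwarz and then verified using the two-regime bound~\eqref{eq:uniformboundedge} and the decay rate $f(x)=O(|x|^{-3/2-\epsilon})$ on the negative axis so that $\sigma_n(u)\msf K_n(u,u)=O(|u|^{-1-\epsilon})$ is integrable, producing the $n$-independent $L^2$ envelope $\Phi=\Psi$. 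This matches the paper's argument in all essential steps.
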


Let us again consider some concrete examples.

\begin{enumerate}[(1)]
\item For $\sigma(u)=\itid_{(s,+\infty)}$, the deformed {\rev orthogonal polynomial} ensemble is the original one restricted to the domain $(-\infty,s)$. In other words, it consists of introducing a hard edge in the vicinity of the soft edge, which corresponds to $u=0$. This leads to a hard-to-soft-edge transition, and the limit kernels $\msf K_{\rm Airy}^{\itid_{(s,+\infty)}}$ in this transition have been studied in \cite{claeys_kuijlaars08} and are connected to the Painlev\'e II equation; it was however not noticed there that the limit point process is the Airy point process conditioned on having no particles bigger than $s$.
\item For $\sigma(u)=\gamma \itid_{(s,+\infty)}(u)$ with $s\in\mathbb R$ and $\gamma\in (0,1)$, the limit kernels $\msf K_{\rm Airy}^{\sigma}$ and the associated Riemann-Hilbert problems have been investigated in detail in \cite{XuZhao, BogatskiyClaeysIts16}. These kernels are connected to the Hastings-McLeod and Ablowitz-Segur solutions of the Painlev\'e II equation.
\item For the choice $\sigma(u)=1/(1+\ee^{-tu-s})$ with $t>0$ and $s\in \R$, the kernel $\msf K_{\rm Airy}^{\sigma}$ was obtained in \cite{GhosalSilva22} as the limit at the soft edge scaling for a class of microscopic deformations of  orthogonal polynomial ensembles of the form \eqref{def:OPE}. These deformations were in particular assumed to be real analytic in a neighborhood of the real axis. Here, we obtain a more general class of limit kernels parametrized by the function $\sigma$, and we obtain it under weak regularity assumptions on the deformation. It was also proved in  \cite{GhosalSilva22} that the limit kernel $\msf K_{\rm Airy}^{\sigma}$ can be expressed in terms of an integro-differential generalization of the second Painlev\'e equation, by combining results from \cite{CafassoClaeysRuzza2021} and \cite{ClaeysGlesner2021}.
\end{enumerate}

In a similar vein as for the sine process, functions $\sigma$ for which Case (3) applies correspond to the general ones in \eqref{def:sigmaedge1}--\eqref{def:sigmaedge2} with $t=0$, and Case (2) correspond to $t=0$ and $f(u)=-\log(1-\gamma\itid_{(s,+\infty)}(u))$ with $\gamma\in (0,1)$. In these two cases Corollary~\ref{cor:Airy} applies directly. 

Case (1) corresponds to $t=0$, and to apply Theorem~\ref{thm:biOPE} we modify the arguments outlined before Corollary~\ref{cor:Airy} as follows. With $m=\min(\inf F,s)$ the bound $\itid_F(u)+\sigma(u)\leq 2\times \itid_{(m,+\infty)}(u)$ applies, and the function $H$ in \eqref{eq:fundboundAiry} must be replaced by $C\times \itid_{(m,+\infty)}(u)\ee^{-V(u)}$, which is integrable. The arguments in the sequel of \eqref{eq:fundboundAiry} can then be applied.

Along the lines of (3), a combination of \cite{CafassoClaeysRuzza2021} and \cite{ClaeysGlesner2021} similar to the one explored in \cite{GhosalSilva22}, together with Corollary~\ref{cor:Airy}, yields the characterization of the kernel $\msf K_{\rm Airy}^\sigma$ in terms of an integro-differential Painlevé II equation, for a large class of $\sigma:\R\to [0,1]$. Gap probabilities for a family of kernels which also contain the kernels $\msf K_{\rm Airy}^\sigma$ for some choices of $\sigma$ have been recently studied by Kimura and Navand \cite{KimuraNavand2024}. 

\begin{remark}\label{remark:intsysdefkernels}
As mentioned, in all the cases (1)--(3) mentioned after Corollary~\ref{cor:sine}, and also in the cases (1)--(2) mentioned just above, the deformed kernel $\msf K^\sigma$ was characterized in terms of a solution to a {Riemann-Hilbert problem. The mentioned works established the connection of the kernels with integrable differential equations (more specifically Painlev\'e equations)
 and considered asymptotic questions related to them. Curiously, the probabilistic characterization of such kernels as the correlation kernels of the conditional thinned versions of the sine and Airy point processes was not noticed before, despite the fact that it can help to obtain more insight in asymptotic questions, for instance to construct $g$-functions needed in asymptotic Riemann-Hilbert analysis.}

Even though we focused here on discussing deformations near bulk and soft edge points, our general result Theorem \ref{thm:biOPE} can be used to obtain similar results near hard edges, near discontinuities of the weight functions, or near other types of singular points where limiting kernels other than the sine and Airy kernels arise \cite{GhosalSilva22,ClaeysGlesner2021}.    
\end{remark}

%

\subsection{Deformations of discrete Coulomb gases}\label{sec:discreteCoulomb}\hfill 

The orthogonal polynomial ensembles \eqref{def:OPE} can be interpreted as continuous log-gases. They admit natural discrete analogues that we now discuss. For positive integers $n$ and $N$ with $n<N$, consider discrete measures of the form
\begin{equation}\label{eq:discreteCoulomb}
P_{n,N}(x_1,\cdots, x_n)\prod_{i=1}^n \dd \nu_N(x_j)\deff \frac{1}{\msf Z_{n,N}}\prod_{1\leq i<j\leq n}(x_j-x_i)^2\prod_{j=1}^n w_N(x_j)\dd \nu_N(x_j),
\end{equation}
viewed as a distribution of $n$ particles $x_j$ on the discrete set 
\begin{equation}\label{deff:equallyspacedlattice}
\Lambda=\Lambda_N\deff \left\{x^{(N)}_{j}: j=0,\ldots, N-1\right\}\subset \R.
\end{equation}
We assume $\Lambda_N\subset [0,1]$ for simplicity, the considerations that follow would still go through replacing $[0,1]$ by any other compact subset of the real axis. The points $x_j^{(N)}$ are usually referred to as the {\it nodes} for the discrete ensemble. The set of nodes $\Lambda_N$ varies with $N$, and $\dd\nu_N$ is the $N$-dependent counting measure on $\Lambda_N$. The partition function $Z_{n,N}$ is taken such that \eqref{eq:discreteCoulomb} is a probability measure. The ensemble \eqref{eq:discreteCoulomb} is a biorthogonal ensemble as well, in fact a discrete orthogonal polynomial ensemble. The functions $\phi_j=\psi_j=p_{j-1}^{(N)}$ in \eqref{eq:biorth} are the orthonormal polynomials with respect to the discrete varying measure $w_N(x)\dd\nu_N(x)$ on $\Lambda_N$, namely satisfying
\begin{equation}\label{eq:discreteOPs}
\int_{\Lambda_N} p^{(N)}_j(x)p^{(N)}_k(x) w_N(x)\dd\nu_N(x)=\sum_{x\in \Lambda_N} p^{(N)}_j(x)p^{(N)}_k(x) w_N(x)=\delta_{j,k},\quad j,k=0,\hdots, n,
\end{equation}
where $\delta_{j,k}$ is the Kronecker delta, and conditions on the weight $w_N$ will be placed in a moment. Particular instances of such orthogonal polynomial ensembles arise with the rescaled Krawtchouk and Hahn weights, in connection with random domino tilings of the Aztec diamond, and random lozenge tilings of the hexagon, respectively {\rev \cite{Johansson2005RWHahn, Johansson2001, Johansson2002nonintersecting}}. Since their introduction in the seminal work \cite{Johansson2001}, discrete particle systems of the form \eqref{eq:discreteCoulomb} on various different subsets of the integers have been profoundly explored in the literature, in particular in connection with growth models and tilings as the ones mentioned, among many other ones. We refer the reader to \cite{Baik2007a, DasDimitrov2022} for an account of the overarching connection of \eqref{eq:discreteCoulomb} with other models.

For us, we consider \eqref{eq:discreteCoulomb} under the framework studied in \cite{Baik2007a}, which we now briefly describe. In summary, we work under the following assumptions.
\begin{assumption}\label{assumption:dCoulomb}
The discrete Coulomb gas ensemble \eqref{eq:discreteCoulomb} satisfies the following conditions.
\begin{enumerate}[(i)]
    \item The set of nodes $\Lambda_N$ is defined by a quantization rule \eqref{eq:quantization} with respect to a density function $\rho$.
    \item The orthogonality weight is of the form $w_N(x)= \ee^{-NV_N(x)}$ for a suitable function $V_N$.
    \item The constrained equilibrium measure of the system satisfies a regularity condition {\rev (in the sense discussed around \eqref{eq:energyfunctional} below)}.
   \end{enumerate}    
\end{assumption}

Under these assumptions, we are interested in the limit where $n\to +\infty$ with $n=\beta N$ and $\beta\in (0,1)$ fixed.
We now detail what parts (i)--(iii) of Assumption \ref{assumption:dCoulomb} mean. In what follows, for a compactly supported finite measure $\lambda$ on $\R$, its logarithmic potential is
$$
U^\lambda(z)\deff \int \log \frac{1}{|x-z|}\dd\lambda(x),\quad z\in \R,
$$
which is a well-defined function assuming values in $(-\infty,+\infty]$. 

We assume that the set of nodes $\Lambda_N$ is defined through a density function: there exists a function $\rho$ which is analytic in a complex neighborhood of $[0,1]$, $\rho(x)>0$ on $[0,1]$, $\int_0^1\rho(x)\dd x=1$, and the nodes are defined through the relation
\begin{equation}\label{eq:quantization}
\int_0^{x^{(N)}_{j}}\rho(x) \dd x=\frac{2j+1}{2N},\quad j=0,\hdots, N-1,
\end{equation}
which is usually referred to as a quantization rule.
We view $\rho$ as a density on $[0,1]$, with its corresponding logarithmic potential $U^\rho \deff U^{\rho\itid_{[0,1]}\dd x}$.
The weight $w_N$ is assumed to be of the form
\begin{equation}\label{eq:discreteCoulombweight}
w_N(x)=\ee^{-NV_N(x)}, \quad \text{with}\quad V_N(x)=V(x)-U^{\rho}(x)+\frac{\eta(x)}{N},
\end{equation}
and where $V$ is independent of $N$, real analytic in a complex neighborhood $G$ of $(0,1)$, and $\eta$ is allowed to depend on $N$ but in such a way that
\begin{equation}\label{eq:boundetadC}
C_\eta\deff \limsup_{N\to\infty}\sup_{z\in G} |\eta(z)|<\infty.
\end{equation}
The function $V$ may be viewed as the confining potential for the discrete Coulomb gas. The term $U^\rho$ in \eqref{eq:discreteCoulombweight} could have been incorporated into $V$, but it has a slightly different meaning: it arises upon taking the discrete-to-continuous limit of the discrete set $\Lambda_N$ to the interval $[0,1]$. Furthermore, for technical reasons it is important that $V$ admits an analytic continuation to a full complex neighborhood of $[0,1]$, which is not necessarily the case for $U^\rho$.

The situation of equally spaced nodes is recovered with $\rho\equiv 1$ and $x_j^{(N)}=\frac{2j-1}{2N}$, for which $U^\rho(x)=-x\log x-(1-x)\log(1-x)+1$. The particular cases of (rescaled) Hahn and Krawtchouk polynomials correspond to equally spaced nodes and the choice $V(x)=-(a+x)\log(a+x)-(b-x)\log (b-x)+c x +d$, where $a,b,c,d$ are appropriate parameters.

A central role is played by the constrained equilibrium measure $\mu^V$, which is the unique Borel probability measure on $[0,1]$ that minimizes
\begin{equation}\label{eq:energyfunctional}
\iint \frac{1}{|x-y|} \dd\mu(x)\dd\mu(y)+\frac{1}{\beta}\int (V(x)-U^\rho(x))\dd \mu(x),
\end{equation}
over all Borel probability measures $\mu$ on $[0,1]$ satisfying the constraint $\mu(B)\leq \frac{1}{\beta}\int_B \rho(x)\dd x$ for every Borel set $B\subset [0,1]$. 

The constraint implies that the equilibrium measure $\mu^V$ is absolutely continuous with respect to the Lebesgue measure, say $\dd\mu^V(x)=\kappa_V(x)\dd x$, with ${\rev 0\leq \kappa_V(x)\leq \rho(x)/\beta}$. Moreover, in our situation, the interval $[0,1]$ is divided into a finite number of bands, voids, and saturated regions: a void is an interval on which ${\rev \kappa_V(x)=0}$, a band is an interval on which $0<\kappa_V(x)<\rho(x)/\beta$, and a saturated region is an interval on which ${\rev \kappa_V(x)=\rho(x)/\beta }$. The density $\kappa_V$ is real analytic on each of the bands.  {\rev We assume that $\kappa_V(x)\dd x$ is a regular equilibrium measure, in the following sense.
\begin{enumerate}
\item[(i)] Both $\kappa_V(x)$ and $\rho(x)/\beta-\kappa_V(x)$ have a square-root behavior at each point where they vanish.
\item[(ii)] The two endpoints $0$ and $1$ do not belong to the closure of a band.
\end{enumerate}

Condition (i) is a generic condition on $V$, in the sense that if it is not satisfied by a specific $V$, then it is satisfied for any sufficiently small perturbation of $V$; we refer the reader to \cite[Section~2.1]{Baik2007a} for a detailed discussion. Condition (ii) implies that $\kappa_V(0),\kappa_V(1)$ are either $0$ or $\rho(0)/\beta,\rho(1)/\beta$. }
We impose all these technical conditions in order to be able to use convergence results and bounds for the kernel $\msf k_n$ from established sources in the literature. The computation of the constrained equilibrium measures for the Hahn and Krawtchouk orthogonal polynomial ensembles are thoroughly discussed in \cite[Section~2.4]{Baik2007a} and \cite{DragnevSaff00}, respectively.

Under the conditions just discussed, it is known that $\kappa_V$ describes the one-point function of the ensemble \eqref{eq:discreteCoulomb}. Concretely, if $J$ is any closed interval where $0<\kappa_V(x)<\frac{\rho(x)}{\beta}$,
 then the limit
\begin{equation}\label{eq:densitydCoulomb}
\lim_{n\to\infty} \msf k_n(x,x) = \kappa_V(x),
\end{equation}
holds true as $n\to \infty$ with $n=\beta N$ as before, uniformly for $x\in J$. As stated, this is \cite[Lemma~7.12]{Baik2007a}. Variants of this result also appear in \cite{DragnevSaff97, Feral08, johansson_2000}.

We will now apply Theorem~\ref{thm:biOPE-varying} to the discrete Coulomb gas under Assumption~\ref{assumption:dCoulomb}. For that, we need to verify  Assumption~\ref{assumption:sigmavarying}, which we do as follows.
\begin{enumerate}[(I)]
    \item Determine the scaling of the kernel \eqref{def:rescaled kernel} and the corresponding varying measures $\mu_n$ that, together, determine a point process $\mcal X_n$ with state space $\Omega_N$. 
    \item Identify a limiting measure $\mu$ and the state space $\Omega$ where it acts.
    \item Determine the limiting kernel $\msf K$ acting on $L^2(\mu)$, with the corresponding limiting point process $\mcal X$ on the state space $\Omega$.
    \item Determine a suitable class of varying symbols $\sigma_n$ and their limit $\sigma$.
    \item Verify that, in this construction, Assumption~\ref{assumption:sigmavarying} (1)--(3) holds.
\end{enumerate}

We consider a {\it bulk point}, that is, a point $x^*$ for which $0<\kappa_V(x^*)<\rho(x^*)/\beta$. 
Scale the set of nodes $\Lambda_N$ to $\Omega_N$,
$$
\Omega_N\deff \frac{\kappa_V(x^*)n}{\beta} (\Lambda_N-x^*),
$$
and scale $\msf k_n$ to $\msf K_n$ as in \eqref{def:rescaled kernel}, here taking the form
\begin{equation}\label{eq:corrkerneldiscrete}
\msf K_n(u,v)\deff \msf k_n\left( x^*+\frac{\beta u}{\kappa_V(x^*)n}, x^*+\frac{\beta v}{\kappa_V(x^*)n} \right), \quad u,v\in \Omega_N.
\end{equation}
Notice that we are not adding the usual factor $\frac{1}{n \kappa_V(x^*)}$ in front of the scaled kernel. This is so because we prefer not to scale the counting measure of $\Omega_N$ later on, and this choice is also consistent with \eqref{eq:densitydCoulomb}.

For later reference, we emphasize that $\msf k_n$ is defined through the discrete orthonormal polynomials \eqref{eq:discreteOPs} with varying weight $w_N$, namely
$$
\msf k_n(x,y)\deff \sqrt{w_N(x)w_N(y)}\sum_{k=0}^{n-1} p_k^{(N)}(x)p_k^{(N)}(y),\quad x,y\in \Lambda_N.
$$
Notice that $\Omega_N$ is naturally a finite set of $\K=\R$, and $\msf K_n$ is the correlation kernel of a point process on $\Omega_N$, with the reference measure $\dd\mu_n$ being the counting measure on $\Omega_N$, hence varying with $n=\beta N$. The kernel \eqref{eq:corrkerneldiscrete}, is initially defined only for $u,v\in \Omega_N$. We extend $\msf K_n$ to other values $u,v\in \R$ by setting it to zero for $u,v\in \R^2\setminus (\Omega_N\times \Omega_N)$. Observe that verifying Assumption~\ref{assumption:sigmavarying} (2)--(3) requires only information about the kernel on $\supp\mu_n=\Omega_N$, so such {\rev an} extension is only needed for completeness but does not play any substantial role in what follows. This concludes step (I).

To identify the state space of the limiting point process, let us label
$$
\Omega_N=\{ \cdots <q^{(N)}_{-1}<0\leq q_0^{(N)}< q_2^{(N)}<\cdots  \}=\{q_j^{(N)}\}_{j\in J}, \quad J=J_N\subset \Z.
$$
That is, $\Omega_N$ is a collection of $N$ sites $q_j^{(N)}$ indexed over $J\subset \Z$, and $q_0^{(N)}$ is the smallest non-negative of such sites.

Condition \eqref{eq:quantization} written in terms of the $q_j^{(N)}$'s implies that these nodes must satisfy
\begin{equation}\label{eq:quantizationscaled}
\int_{q_{j-1}^{(N)}}^{q_j^{(N)}}\rho\left(x^*+\frac{\beta u}{\kappa_V(x^*)n}\right)\dd u=\frac{\kappa_V(x^*)}{\beta}\frac{n}{N}=\kappa_V(x^*).
\end{equation}
The density $\rho$ is continuous and strictly positive on $[0,1]$, say with $m\leq \rho(x)\leq M$ for every $x\in [0,1]$. This last identity implies in particular that
\begin{equation}\label{eq:bounddiscrnodes}
\frac{\kappa_V(x^*)}{M} \leq q_j^{(N)}-q_{j-1}^{(N)}\leq \frac{\kappa_V(x^*)}{m},
\end{equation}
for every $j$. From the fact that $q_0^{(N)}$ is the smallest non-negative site we obtain the inequality $|q_0^{(N)}|\leq |q_0^{(N)}-q_{-1}^{(N)}|$, and therefore the sequence $(q_0^{(N)})_N$ remains bounded as $N\to \infty$. From now on we assume for definiteness that $q_0^{(N)}\to 0$, otherwise all the considerations that will follow still hold true after a suitable shift.

The bound \eqref{eq:bounddiscrnodes} and the convergence $q_0^{(N)}$ implies that for each $j$ fixed, the sequence $(q_j^{(N)})_N$ remains bounded as $N\to \infty$. Thanks to \eqref{eq:quantizationscaled}, we must thus have
\begin{equation}\label{eq:limitnodes}
q_j^{(N)}\to \frac{\kappa_V(x^*)}{\rho(x^*)} j \quad \text{ as }N\to \infty \text{ while } j \text{ is kept fixed}.
\end{equation}
Thus, we identified that the set of nodes $\Omega_N$ is converging to the limiting set $\Omega\deff \frac{\kappa_V(x^*)}{\rho(x^*)} \Z$. More precisely, recalling that $\dd\mu_n$ is the counting measure for $\Omega_N$, and denoting by $\dd\mu$ the counting measure for $\Omega$, the limit \eqref{eq:limitnodes} implies that the sequence $(\mu_n)$ converges vaguely to $\mu$, that is, for any continuous function $h$ with compact support we have $\int h \,\dd\mu_n\to \int h\, \dd\mu$ as $N\to \infty$. This concludes step (II).

For later, we also need the following rough concentration property: there exists $C_0>0$ such that for any interval $[-L,L]$ and any $n$ sufficiently large,
\begin{equation}\label{eq:unifboundcontingemeas}
\mu_n([-L,L])\leq C_0 L.
\end{equation}
Such a concentration is a consequence of \eqref{eq:quantizationscaled}, \eqref{eq:bounddiscrnodes} and \eqref{eq:limitnodes}, and carries through to $\mu([-L,L])=\lim_{n\to\infty} \mu_n([-L,L])$.

As we will see in a moment, the kernel $\msf K_n$ converges to the {\it discrete sine kernel}, defined on $\Omega$ by
$$
\msf K(u,v)=\msf K_{\dsin}(u,v)\deff \frac{\beta \kappa_V(x^*)}{\rho(x^*)}\frac{\sin(\pi(u-v))}{\pi(u-v)},\quad u\neq v,\quad \msf K_{\dsin}(u,u)\deff \frac{\beta \kappa_V(x^*)}{\rho(x^*)}
$$
In other words, the expression for $\msf K_\dsin$ is a multiple of the sine kernel in \eqref{eq:sinelimit}, but we reserve the notation $\msf K_\dsin$ to remind the reader that this kernel acts as an operator $\bm K_\dsin$ on the space $\ell^2(\Omega)$ with the discrete counting measure $\mu$. Nevertheless, observe that $\msf K_\dsin$ is continuous on $\R^2=\K^2$, a condition which is necessary in Assumption~\ref{assumption:sigmavarying}. The kernel $\msf K_\dsin$ defines a point process $\mcal X_\dsin$ on the discrete set $\Omega$. Step (III) is therefore concluded.

We consider functions $\sigma_n(u)$ of the form
\begin{equation}\label{eq:sigmadsine}
\sigma_n(u)=\sigma(u)=1-\ee^{-f(u)},\quad u\in \R,
\end{equation}
where $f:\R\to [0,+\infty)$ is continuous and compactly supported. In particular, $\sigma_n$ does not depend on $n$, and is interpreted as already living on the microscopic scale $u$. In principle, we could replace the assumption of compact support of $f$ by a mild growth condition on $f$ (compare with \eqref{def:sigmaOPE}), but that would require some mild global bounds on the kernel $\msf K_n$ which we could not find in the literature, and obtaining them would go beyond the scope of this paper. This concludes step (IV).

Finally, we now verify (V).

We start by analyzing the kernel $\msf K_n$. First of all, given a compact $F\subset \R$, \cite[Lemma~7.13]{Baik2007a} says that
\begin{equation}\label{eq:dsinconv}
\sup_{u,v\in F\cap \Omega_N} | \msf K_n(u,v)- \msf K_\dsin(u,v) |\leq \frac{C}{n},
\end{equation}
where $C=C_F(x^*)$ is a constant depending on $F$, the bulk point $x^*$ and $\beta\in (0,1)$, and the inequality is valid as $n\to \infty$ with $n=\beta N$. This statement yields the first limit in Assumption~\ref{assumption:sigmavarying} (2). The second limit in Assumption~\ref{assumption:sigmavarying} (2) is trivial, as in our case of consideration $\sigma_n=\sigma$ is independent of $n$.

Finally, we verify {Assumption~\ref{assumption:sigmavarying} (3)}. For that, we bound the kernel $\msf K_n$ and the factor $\sqrt{\itid_F+\sigma_n}$ separately. For the kernel, the convergence \eqref{eq:dsinconv} and the boundedness of $\msf K_\dsin$ give that for any $L>0$, a bound of the form
\begin{equation}\label{eq:boundkerneldOP0}
|\itid_{[-L,L]}(u)\msf K_n(u,v)\itid_{[-L,L]}(v)|\leq C \itid_{[-L,L]}(u)\itid_{[-L,L]}(v),\quad u,v\in \Omega_N
\end{equation}
holds true. 

Next, take $L>0$ sufficiently large such that $F\cup \supp f\subset [-L,L]$, making also sure that $\pm L\notin \Omega$. From the bound \eqref{eq:boundkerneldOP0},
$$
|\sqrt{\itid_F(u)+\sigma_n(u)}\msf K_n(u,v)\sqrt{\itid_F(v)+\sigma_n(v)}|\leq 2C\, \itid_{[-L,L]}(u)\itid_{[-L,L]}(v) \quad u,v\in \Omega_N.
$$
Hence, setting $\Psi(u)=\Phi(u)=2C\, \itid_{[-L,L]}(u)$, the bound in Assumption~\ref{assumption:sigmavarying} (3) holds true. Thanks to \eqref{eq:unifboundcontingemeas}, the $L^2(\mu_n)$ and $L^\infty(\mu_n)$ norms of both $\Phi$ and $\Psi$ remain bounded as $n\to \infty$. Because $\pm L\notin \Omega=\supp\mu$ and $(\mu_n)$ converges vaguely to $\mu$, the weak convergence $\Phi\Psi \dd\mu_n\stackrel{*}{\to}\Phi\Psi \dd\mu$ also holds true. Step (V) is finally completed, and using Theorem~\ref{thm:biOPE-varying} we concluded the following.

\begin{corollary}\label{cor:dCoulomb}
Consider the discrete Coulomb gas ensemble \eqref{eq:discreteCoulomb} under Assumption~\ref{assumption:dCoulomb}. Scale it around a bulk point $x^*$ as in \eqref{eq:corrkerneldiscrete} and let $\sigma$ be as in \eqref{eq:sigmadsine}.

For any compactly supported continuous function $h:\R\to [0,+\infty)$, the limit
\[
\lim_{n\to\infty}\mathcal G_n^{\sigma_n}[h]=\mathcal G_{\dsin}^{\sigma}[h],
\]
is valid, where $\mathcal G_{\dsin}$ and $\mathcal G_{\dsin}^\sigma$ are the probability generating functionals of the discrete sine point process on $\Omega=\frac{\kappa_V(x^*)}{\rho(x^*)}\Z$ and its $\sigma$-deformation,
\[
\mathcal G_{\dsin}[h]\deff \det(\itid- \sqrt h\mathbf K_{\dsin}\sqrt h),\qquad \mathcal G_{\dsin}^{\sigma}[h]\deff \det(\itid- \sqrt h\mathbf K_{\dsin}^\sigma\sqrt h),
\]
with
\[
\mathbf K_{\dsin}^\sigma\deff \sqrt{1-\sigma}\mathbf K_{\dsin}(\itid-\sigma\mathbf K_{\dsin})^{-1}\sqrt{1-\sigma}.
\]
In particular, the deformed discrete Coulomb gas $\mathcal X_n^{\sigma_n}$ converges weakly as $n\to\infty$ to the deformed discrete sine point process $\mathcal X_{\dsin}^{\sigma}$.
\end{corollary}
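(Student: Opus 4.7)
The plan is to apply Theorem~\ref{thm:biOPE-varying} directly, following the five-step roadmap (I)--(V) already set up in the discussion preceding the corollary. The bulk of the work is in verifying the three parts of Assumption~\ref{assumption:sigmavarying} for the scaled discrete ensemble with kernel $\msf K_n$ on $\Omega_N$, the counting reference measures $\mu_n$ on $\Omega_N$, and the symbol $\sigma_n=\sigma=1-\ee^{-f}$. Once these are in place, the weak convergence $\mcal X_n^{\sigma_n}\stackrel{*}{\to}\mcal X_\dsin^{\sigma}$ follows immediately, and the explicit formulas for $\mcal G_\dsin,\mcal G_\dsin^\sigma$ in terms of Fredholm determinants are just the specialization of \eqref{def:Gsigma} with $\msf K=\msf K_\dsin$ and $\mu$ the counting measure on $\Omega=\frac{\kappa_V(x^*)}{\rho(x^*)}\Z$.

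First, I would verify Assumption~\ref{assumption:sigmavarying}\,(1), namely $\mcal G_n[\sigma_n]>0$. Since $\sigma_n=1-\ee^{-f}$ with $f$ continuous and compactly supported, we have $1-\sigma_n=\ee^{-f}>0$ everywhere, so the deformed weight $W_n^{\sigma_n}$ is strictly positive on $\Omega_N$, making the partition function of \eqref{def:deform} strictly positive. Next, for part (2) the first convergence $\|(\itid_F\otimes\itid_F)(\msf K_n-\msf K_\dsin)\|_{L^\infty(\mu_n\otimes\mu_n)}\to 0$ is exactly the content of the quoted estimate \eqref{eq:dsinconv} from \cite{Baik2007a}, and the second convergence is trivial because $\sigma_n=\sigma$ is independent of $n$ and continuous. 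The limit kernel $\msf K_\dsin$ is manifestly continuous on $\R^2$.

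The main technical obstacle is part (3), which requires a uniform (in $n$) domination of the kernel by a product $\Phi(u)\Psi(v)$ with controlled $L^2$ and $L^\infty$ norms and a weak convergence of $\Phi\Psi\,\dd\mu_n$ to $\Phi\Psi\,\dd\mu$. The key observation is that because $f$ has compact support, we can pick $L>0$ with $F\cup\supp f\Subset(-L,L)$ and $\pm L\notin \Omega$; then $\itid_F+\sigma_n$ is supported in $[-L,L]$, so only the behavior of $\msf K_n$ on $[-L,L]\cap\Omega_N$ matters. On this compact window, \eqref{eq:dsinconv} together with boundedness of $\msf K_\dsin$ yields the uniform bound \eqref{eq:boundkerneldOP0}, so $\Phi=\Psi=C\,\itid_{[-L,L]}$ is an admissible choice. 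The uniform bound \eqref{eq:unifboundcontingemeas} on $\mu_n([-L,L])$ gives uniform $L^2(\mu_n)$ and $L^\infty(\mu_n)$ control, and the vague convergence $\mu_n\to\mu$ combined with the fact that $\pm L\notin \supp\mu$ yields the required weak convergence $\Phi\Psi\,\dd\mu_n\stackrel{*}{\to}\Phi\Psi\,\dd\mu$ by the Portmanteau theorem (the discontinuity set of $\itid_{[-L,L]}$ is $\{\pm L\}$, a $\mu$-null set).

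With Assumption~\ref{assumption:sigmavarying} verified, Theorem~\ref{thm:biOPE-varying} gives $\lim_{n\to\infty}\mcal G_n^{\sigma_n}[h]=\mcal G^\sigma[h]$ for every continuous $h:\R\to[0,+\infty)$ of bounded support, where $\mcal G^\sigma$ is the generating functional of the deformation $\mcal X_\dsin^\sigma$ of the discrete sine process, defined via the operator $\mathbf K_\dsin^\sigma=\sqrt{1-\sigma}\mathbf K_\dsin(\itid-\sigma\mathbf K_\dsin)^{-1}\sqrt{1-\sigma}$ as in \eqref{def:Ksigma}. By the probability generating functional characterization of weak convergence recalled in the introduction, this gives $\mcal X_n^{\sigma_n}\stackrel{*}{\to}\mcal X_\dsin^\sigma$, completing the proof.
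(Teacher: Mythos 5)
Your proposal is correct and follows essentially the same route as the paper: it carries out the five-step roadmap (I)--(V), verifies Assumption~\ref{assumption:sigmavarying} using exactly the ingredients the paper uses (positivity of $\ee^{-f}$, the convergence estimate \eqref{eq:dsinconv}, the compactly-supported dominating pair $\Phi=\Psi=\text{const}\cdot\itid_{[-L,L]}$ with $\pm L\notin\Omega$, the concentration bound \eqref{eq:unifboundcontingemeas}, and the vague convergence $\mu_n\to\mu$), and then invokes Theorem~\ref{thm:biOPE-varying}. The invocation of Portmanteau for the weak convergence $\Phi\Psi\,\dd\mu_n\stackrel{*}{\to}\Phi\Psi\,\dd\mu$ is the same reasoning the paper implicitly relies on, so there is no substantive difference in the argument.
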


\section{Proof of Theorem \ref{thm:biOPE}}\label{sec:proofB}

In this section, we consider biorthogonal ensembles $\mathcal X_n$ as in \eqref{def:BiOEscaled} with kernels $\msf K_n$ as in \eqref{def:rescaled kernel} and probability generating functionals $\mathcal G_n$, acting on an underlying space $L^2(\mu)$ {\rev over the field $\C$}.
Given a sequence of functions $(\sigma_n)$, we consider the deformed biorthogonal ensemble $\mathcal X_n^{\sigma_n}$ from \eqref{def:deform}, and we assume that $\msf K_n$ and $\sigma_n$ are such that Assumption \ref{assumption:sigma} holds true.
As before, we denote $\msf K_n^{\sigma_n}$ for the correlation kernel of $\mathcal X_n^{\sigma_n}$, and $\mathcal G_n^{\sigma_n}$ for its probability generating functional; when applying this functional to a function $h$, we will always assume that $h:\K \to [0,1)$ is continuous and compactly supported, with $\sup_{x\in\K }h(x)<1$. 
The next two auxiliary results are essentially special cases of more general results in \cite{ClaeysGlesner2021}, but we include a self-contained proof here in our settings, for the convenience of the reader.

\begin{lemma}\label{lemma:G}
If Assumption \ref{assumption:sigma} (1) holds, we have the identity
\[\mathcal G_n^{\sigma_n}[h]=\det\left(\itid-\sqrt h{\mathbf K}_n^{\sigma_n}\sqrt h\right),\quad\mbox{with}\quad {\mathbf K}_n^{\sigma_n}\deff \sqrt{1-\sigma_n}{\mathbf K}_n\left(\itid-\sigma_n{\mathbf K}_n\right)^{-1}\sqrt{1-\sigma_n}.\]
\end{lemma}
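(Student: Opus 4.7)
The plan is to directly manipulate the integral definition of $\mathcal G_n^{\sigma_n}[h]$ and reduce everything to identities between Fredholm determinants of the finite-rank operator $\mathbf K_n$. Throughout, since $\mathbf K_n$ is finite rank on $L^2(\mu_n)$ and both $h,\sigma_n$ are bounded, every operator product appearing below is trace class, so the functional calculus and Sylvester's determinant identity $\det(\itid - AB) = \det(\itid - BA)$ apply freely.

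First, I would observe that the product in the integrand of $\mathcal G_n^{\sigma_n}[h]$ satisfies
\[
(1-h(u))(1-\sigma_n(u)) = 1 - g(u), \qquad g \deff \sigma_n + (1-\sigma_n)h.
\]
Thus the integral in the definition of $\mathcal G_n^{\sigma_n}[h]$ equals $\msf Z_n\,\mathcal G_n[g]$. Applying the Fredholm determinant representation \eqref{def:Fredholmseries} to both $g$ and $\sigma_n$ (valid since $\mathbf K_n$ is finite rank), we obtain
\[
\mathcal G_n^{\sigma_n}[h] = \frac{\mathcal G_n[g]}{\mathcal G_n[\sigma_n]} = \frac{\det\bigl(\itid - g\,\mathbf K_n\bigr)}{\det\bigl(\itid - \sigma_n\mathbf K_n\bigr)}.
\]
The hypothesis $\mathcal G_n[\sigma_n] > 0$ from Assumption~\ref{assumption:sigma}\,(1) guarantees that $\itid - \sigma_n\mathbf K_n$ is invertible, so this ratio is meaningful.

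Next, I would factor the numerator. Writing $g\,\mathbf K_n = \sigma_n\mathbf K_n + (1-\sigma_n)h\,\mathbf K_n$ and using multiplicativity of Fredholm determinants,
\[
\det\bigl(\itid - g\mathbf K_n\bigr) = \det\bigl(\itid - \sigma_n\mathbf K_n\bigr)\,\det\!\left(\itid - (1-\sigma_n)h\,\mathbf K_n(\itid - \sigma_n\mathbf K_n)^{-1}\right),
\]
so that
\[
\mathcal G_n^{\sigma_n}[h] = \det\!\left(\itid - (1-\sigma_n)h\,\mathbf K_n(\itid - \sigma_n\mathbf K_n)^{-1}\right).
\]

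Finally, since $h$ and $\sigma_n$ are multiplication operators, they commute, and one may write $(1-\sigma_n)h = \sqrt{1-\sigma_n}\sqrt h\cdot\sqrt h\sqrt{1-\sigma_n}$. Applying Sylvester's identity with $A = \sqrt{1-\sigma_n}\sqrt h$ and $B = \sqrt h\sqrt{1-\sigma_n}\,\mathbf K_n(\itid - \sigma_n\mathbf K_n)^{-1}$ cyclically permutes the factor $\sqrt{1-\sigma_n}\sqrt h$ to the right of $\mathbf K_n(\itid-\sigma_n\mathbf K_n)^{-1}$, yielding
\[
\mathcal G_n^{\sigma_n}[h] = \det\!\left(\itid - \sqrt h\,\sqrt{1-\sigma_n}\,\mathbf K_n(\itid - \sigma_n\mathbf K_n)^{-1}\sqrt{1-\sigma_n}\,\sqrt h\right) = \det\bigl(\itid - \sqrt h\,\mathbf K_n^{\sigma_n}\sqrt h\bigr),
\]
which is the desired identity. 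The only subtle points are the invertibility of $\itid - \sigma_n\mathbf K_n$ (handled by Assumption~\ref{assumption:sigma}\,(1)) and the trace-class status of each operator product, which is automatic given that $\mathbf K_n$ is of finite rank. I do not anticipate any substantial obstacle beyond these bookkeeping checks.
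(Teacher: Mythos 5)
Your proof is correct, but it takes a genuinely different route from the paper's, and effectively reverses the logical order of the two auxiliary lemmas. The paper's proof of Lemma~\ref{lemma:G} is \emph{structural}: it shows that $\mathbf K_n^{\sigma_n}$ is the unique finite-rank projection with range $\sqrt{(1-\sigma_n)W_n}A_n$ and kernel the orthogonal complement of $\sqrt{(1-\sigma_n)W_n}\,\overline{B_n}$, verifying idempotence and the action of $\mathbf K_n^{\sigma_n}$ on both subspaces directly; since these are precisely the data characterizing the correlation-kernel operator of the deformed biorthogonal ensemble $\mathcal X_n^{\sigma_n}$, the Fredholm determinant identity then follows from the general theory of biorthogonal ensembles. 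The paper subsequently derives Lemma~\ref{lemma:GB}, the ratio formula $\mathcal G_n^{\sigma_n}[h] = \det(\itid - [\sigma_n+h-\sigma_n h]\mathbf K_n)/\det(\itid - \sigma_n\mathbf K_n)$, as a consequence. You instead establish exactly that ratio formula directly from the integral definition of $\mathcal G_n^{\sigma_n}$, by observing that $(1-h)(1-\sigma_n) = 1-g$ with $g = \sigma_n + (1-\sigma_n)h$ and then factoring the resulting Fredholm determinant, and you recover Lemma~\ref{lemma:G} from it by a cyclic permutation in $\det(\itid - AB) = \det(\itid - BA)$. Your route is shorter and more elementary: no projection-theoretic bookkeeping is needed, every step is a standard determinant identity for trace-class operators, and you prove Lemma~\ref{lemma:GB} as a free by-product. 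What you give up is the structural conclusion that $\mathbf K_n^{\sigma_n}$ is actually the correlation-kernel projection of $\mathcal X_n^{\sigma_n}$; the paper's argument shows this explicitly, which is conceptually tied to the marking-and-conditioning interpretation of the deformed ensemble, but it is not needed for the statement of Lemma~\ref{lemma:G} as written. One minor point worth stating explicitly if you wrote this up: the factorization $(1-\sigma_n)h = \sqrt{1-\sigma_n}\sqrt{h}\cdot\sqrt{h}\sqrt{1-\sigma_n}$ uses the standing convention (recorded just before the lemma) that $h\geq 0$ and $\sigma_n\in[0,1]$, so both square roots are real and the multiplication operators commute.
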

\begin{proof}
Recall from our general discussion on biorthogonal ensembles that the kernel $\msf K_n$ can be taken of the form
\[\msf K_n(u,v)=\sqrt{W_n(u)W_n(v)}\sum_{j=1}^n\Phi_j(u){\Psi_j(v)},\]
where $\Phi_1,\ldots, \Phi_n$ have the same linear span {\rev over $\C$, say $A_n$,} as $F_1,\ldots, F_n$, and $\Psi_1,\ldots, \Psi_n$ have the same linear span {\rev over $\C$, say $B_n$,} as $G_1,\ldots, G_n$, and they satisfy the biorthogonality relations
\[
\int_{\K } \Phi_j(u){\Psi_k(u)}W_n(u)\dd\mu(u)=\delta_{jk}.
\]
It is then straightforward to verify that the integral operator ${\mathbf K}_n$ with kernel $\msf K_n$ acting on $L^2(\mu)$ is the unique linear projection operator with range $\sqrt{W_n}A_n$ and kernel the orthogonal complement of $\sqrt{W_n}\,\overline{B_n}$ on $L^2(\mu)$, where $\overline{B_n}$ is the space of functions whose complex conjugate lies in $B_n$. 
Similarly, the integral operator corresponding to the correlation kernel $\msf K_n^{\sigma_n}$ of $\mathcal X_n^{\sigma_n}$ is the unique linear projection operator with range $\sqrt{(1-\sigma_n)W_n}A_n$ and kernel the orthogonal complement of $\sqrt{(1-\sigma_n)W_n}\,\overline{B_n}$.
We will now prove that this projection operator is given precisely by
\[
\mathbf K_n^{\sigma_n}\deff \sqrt{1-\sigma_n}{\mathbf K}_n\left(\itid-\sigma_n{\mathbf K}_n\right)^{-1}\sqrt{1-\sigma_n}.
\]
The operator $\bm K_n$ is finite rank, so $\sigma_n\bm K_n$ is trace-class. Recalling Assumption \ref{assumption:sigma} (1), 
\[
\mathcal G_n[\sigma_n]=\det\left(\itid-\sqrt{\sigma_n}\bm K_n\sqrt{\sigma_n}\right)=\det\left(\itid-{\sigma_n}\bm K_n\right)>0,
\] 
so that $\itid-{\sigma_n}\bm K_n$ is an invertible $L^2(\mu)$-operator.
%

To see that $\mathbf K_n^{\sigma_n}$ is indeed the desired projection, we observe first that it is of rank $n$, and that
\begin{align*}
\left(\mathbf K_n^{\sigma_n}\right)^2&=\sqrt{1-\sigma_n}{\mathbf K}_n\left(\itid-\sigma_n{\mathbf K}_n\right)^{-1}(1-\sigma_n){\mathbf K}_n\left(\itid-\sigma_n{\mathbf K}_n\right)^{-1}\sqrt{1-\sigma_n}
\\
&=\sqrt{1-\sigma_n}{\mathbf K}_n\left(\itid-\sigma_n{\mathbf K}_n\right)^{-1}(\itid-\sigma_n\mathbf K_n){\mathbf K}_n\left(\itid-\sigma_n{\mathbf K}_n\right)^{-1}\sqrt{1-\sigma_n}\\
&=\sqrt{1-\sigma_n}{\mathbf K}_n^2\left(\itid-\sigma_n{\mathbf K}_n\right)^{-1}\sqrt{1-\sigma_n}\\
&=\mathbf K_n^{\sigma_n},\end{align*}
so that it is indeed a rank $n$ projection. Furthermore, for $\Phi\in A_n$, we have 
$$
(\itid-\sigma_n\bm K_n)[\sqrt{W_n}\Phi]=\sqrt{W_n}\Phi-\sigma_n\bm K_n[\sqrt{W_n}\Phi]=(1-\sigma_n)\sqrt{W_n}\Phi
$$
because $\bm K_n$ is a projection onto $A_n$. Or, in other words, $(\itid-\sigma_n\bm K_n)^{-1}[(1-\sigma_n)\sqrt{W_n}\Phi]=\sqrt{W_n}\Phi$, so that
\begin{align*}\mathbf K_n^{\sigma_n}[\sqrt{(1-\sigma_n)W_n}\Phi]&=\sqrt{1-\sigma_n}{\mathbf K}_n\left(\itid-\sigma_n{\mathbf K}_n\right)^{-1}[(1-\sigma_n)\sqrt{W_n}\Phi]\\
&=\sqrt{1-\sigma_n}{\mathbf K}_n[\sqrt{W_n}\Phi]\\&=\sqrt{(1-\sigma_n)W_n}\Phi,\end{align*}
such that the range  
of $\mathbf K_n^{\sigma_n}$ is indeed
$\sqrt{(1-\sigma_n)W_n}A_n$.
For $F$ in the orthogonal complement of $\sqrt{(1-\sigma_n)W_n}\, \overline{B_n}$,
we have $\bm K_n[\sqrt{1-\sigma_n}F]=0$, so $(\itid-\sigma_n\bm K_n)[\sqrt{1-\sigma_n}F]=\sqrt{1-\sigma_n}F$, and
\begin{align*}
\mathbf K_n^{\sigma_n}[F]&=\sqrt{1-\sigma_n}{\mathbf K}_n\left(\itid-\sigma_n{\mathbf K}_n\right)^{-1}[\sqrt{1-\sigma_n}F]\\
&=\sqrt{1-\sigma_n}{\mathbf K}_n[\sqrt{1-\sigma_n}F]=0,\end{align*}
which concludes the proof.
\end{proof}

\begin{lemma}\label{lemma:GB}
If Assumption \ref{assumption:sigma} (1) holds, $\mathcal G_n^{\sigma_n}[h]$ is given by
$$
\mathcal G_n^{\sigma_n}[h]=\frac{\det\left(\itid-[\sigma_n+h-\sigma_n h]\mathbf K_n\right)}{\det\left(\itid-\sigma_n\mathbf K_n\right)}
$$
\end{lemma}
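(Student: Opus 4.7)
The plan is to bypass the Fredholm algebra of Lemma~\ref{lemma:G} entirely and argue directly from the integral definition of $\mathcal G_n^{\sigma_n}[h]$, exploiting the elementary identity
\[
(1-h(u))(1-\sigma_n(u)) = 1 - \bigl(h(u)+\sigma_n(u)-h(u)\sigma_n(u)\bigr).
\]
Setting $\tau_n \deff h+\sigma_n-h\sigma_n$, the integrand appearing in the definition of $\mathcal G_n^{\sigma_n}[h]$ in \eqref{def:deform} coincides, up to the normalization $\msf Z_n \mathcal G_n[\sigma_n]$, with the integrand defining $\mathcal G_n[\tau_n]$. So multiplying through by $\msf Z_n \mathcal G_n[\sigma_n]$ and using the definition of $\mathcal G_n[\tau_n]$ for the undeformed ensemble gives the purely probabilistic identity
\[
\mathcal G_n^{\sigma_n}[h] = \frac{\mathcal G_n[h+\sigma_n-h\sigma_n]}{\mathcal G_n[\sigma_n]}.
\]
The denominator is nonzero by Assumption~\ref{assumption:sigma}(1), so this ratio is well defined.

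Next, I would translate both $\mathcal G_n[\tau_n]$ and $\mathcal G_n[\sigma_n]$ into Fredholm determinants using the standard identity $\mathcal G_n[\tau] = \det(\itid-\tau\mathbf K_n)$, valid via the Fredholm series \eqref{def:Fredholmseries} for any measurable $\tau:\K\to[0,1]$ provided $\tau\mathbf K_n$ is trace-class. The projection $\mathbf K_n$ has rank $n$, so it is finite rank and a fortiori trace-class; hence $\tau_n\mathbf K_n$ and $\sigma_n\mathbf K_n$ are both trace-class, and the Fredholm determinant representation applies. Combined with the ratio identity of the previous paragraph, this yields precisely
\[
\mathcal G_n^{\sigma_n}[h]=\frac{\det\left(\itid-[\sigma_n+h-\sigma_n h]\mathbf K_n\right)}{\det\left(\itid-\sigma_n\mathbf K_n\right)}.
\]

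There is essentially no serious obstacle: one only needs to check that $\tau_n = 1-(1-h)(1-\sigma_n) \in [0,1]$ (automatic since $h\in[0,1)$ and $\sigma_n\in[0,1]$) so that the Fredholm series for $\mathcal G_n[\tau_n]$ converges term-by-term to the determinant, and that $\mathcal G_n[\sigma_n]>0$ so division is legitimate. Both are ensured by the hypotheses on $h$ and by Assumption~\ref{assumption:sigma}(1), so the identity follows with no further analytic input.
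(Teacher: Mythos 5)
Your proof is correct and takes a genuinely different, more elementary route than the paper. The paper reaches Lemma~\ref{lemma:GB} through Lemma~\ref{lemma:G}: it first shows that the deformed ensemble's correlation operator is the rank-$n$ projection $\mathbf K_n^{\sigma_n}=\sqrt{1-\sigma_n}\,\mathbf K_n(\itid-\sigma_n\mathbf K_n)^{-1}\sqrt{1-\sigma_n}$, writes $\mathcal G_n^{\sigma_n}[h]=\det(\itid-h\mathbf K_n^{\sigma_n})$, and then factors out $(\itid-\sigma_n\mathbf K_n)^{-1}$ inside the determinant using multiplicativity. You instead bypass the operator formula for $\mathbf K_n^{\sigma_n}$ entirely, observing that the defining integral of $\mathcal G_n^{\sigma_n}[h]$ is, up to normalization, the integral defining $\mathcal G_n[\tau_n]$ with $\tau_n=h+\sigma_n-h\sigma_n$, so that $\mathcal G_n^{\sigma_n}[h]=\mathcal G_n[\tau_n]/\mathcal G_n[\sigma_n]$ follows from the elementary algebraic identity $(1-h)(1-\sigma_n)=1-\tau_n$; you then apply the standard Fredholm representation $\mathcal G_n[\psi]=\det(\itid-\psi\mathbf K_n)$ to both numerator and denominator, legitimate since $\mathbf K_n$ has finite rank. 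Your approach is shorter and more transparent for this lemma and does not rely on Lemma~\ref{lemma:G} at all; what you lose is the identification of $\mathbf K_n^{\sigma_n}$ as the correlation kernel of the deformed ensemble, which the paper establishes along the way and which carries conceptual content beyond what is strictly needed for Lemma~\ref{lemma:GB}. Both routes are sound, and both invoke the same hypothesis $\mathcal G_n[\sigma_n]=\det(\itid-\sigma_n\mathbf K_n)>0$ from Assumption~\ref{assumption:sigma}(1) to justify the division.
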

\begin{proof}
We recall from Lemma \ref{lemma:G} that the probability generating functional $\mathcal G_n^{\sigma_n}[h]$ is given by
\[\mathcal G_n^{\sigma_n}[h]=\det(\itid-\sqrt h\mathbf K_n^{\sigma_n}\sqrt h)=\det(\itid-h\mathbf K_n^{\sigma_n})
=
\det\left(\itid-h(1-\sigma_n)\mathbf K_n(\itid-\sigma_n\mathbf K_n)^{-1}\right).\]
If $\det\left(\itid-\sigma_n\mathbf K_n\right)\neq 0$, which is true by Assumption \ref{assumption:sigma} (1), we rewrite the latter as
\[\det\left(\left[\itid-\sigma_n\mathbf K_n-h(1-\sigma_n)\mathbf K_n\right](\itid-\sigma_n\mathbf K_n)^{-1}\right)=\frac{\det\left(\itid-[\sigma_n+h-\sigma_n h]\mathbf K_n\right)}{\det\left(\itid-\sigma_n\mathbf K_n\right)},\]
and the proof is complete.
\end{proof}

Our strategy now consists in proving that 
\begin{align}
&\label{eq:limittoprove1B}
\begin{multlined}[b]
\lim_{n\to\infty} \det\left(\itid-\sqrt{\sigma_n+h-\sigma_n h}\mathbf K_n\sqrt{\sigma_n+h-\sigma_n h}\right)= \\ 
\det\left(\itid-\sqrt{\sigma+h-\sigma h}\mathbf K\sqrt{\sigma+h-\sigma h}\right),
\end{multlined}
\\
&\label{eq:limittoprove2B}
\lim_{n\to\infty}\det\left(\itid-\sqrt{\sigma_n}\mathbf K_n\sqrt{\sigma_n}\right)=\det\left(\itid-\sqrt\sigma\mathbf K\sqrt\sigma\right).
\end{align}
This last determinant is non-zero by Assumption \ref{assumption:sigma} (2).
By Lemma \ref{lemma:GB}, \eqref{eq:limittoprove1B}--\eqref{eq:limittoprove2B} imply that
\begin{equation}
\lim_{n\to\infty}\mathcal G_n^{\sigma_n}[h]=\frac{\det\left(\itid-\sqrt{\sigma+h-\sigma h}\mathbf K\sqrt{\sigma+h-\sigma h}\right)}{\det\left(\itid-\sqrt\sigma\mathbf K\sqrt\sigma\right)}=\mathcal G^\sigma[h],
\end{equation}
where the last equality follows from  \cite[Theorem 2.4 (2)]{ClaeysGlesner2021}.
Thus, to complete the proof of Theorem \ref{thm:biOPE} under Assumption \ref{assumption:sigma}, it remains to prove \eqref{eq:limittoprove1B}--\eqref{eq:limittoprove2B}.

For any bounded Borel measurable function $\psi:\K \to[0,+\infty)$, we have that $\sqrt{\psi}\mathbf K_n\sqrt{\psi}$ is finite rank and therefore trace-class on $L^2(\mu)$, and
$$
\det\left(\itid-\sqrt{\psi}\mathbf K_n\sqrt{\psi}\right)=\sum_{k=0}^\infty\frac{(-1)^k}{k!}S_{n,k}[\psi]
$$
with
\begin{equation}\label{def:Snk}
S_{n,k}[\psi]=S_{n,k}[\psi,\mu]\deff \int_{\K^k}\det\left(\sqrt{\psi(u_j)}\msf K_n(u_j,u_\ell)\sqrt{\psi(u_\ell)}\right)_{j,\ell=1}^k \prod_{j=1}^k \dd \mu(u_j).
\end{equation}
Similarly, if $\phi:\K \to[0,+\infty)$ is such that  $\sqrt\phi\mathbf K\sqrt\phi$ is trace-class on $L^2(\mu)$, 
$$
\det\left(\itid-\sqrt\phi\mathbf K\sqrt\phi\right)_{L^2(\mu)}=\sum_{k=0}^\infty\frac{(-1)^k}{k!}S_{k}[\phi],
$$
where analogously
\begin{equation}\label{def:Sk}
S_{k}[\phi]=S_k[\phi,\mu]\deff \int_{\K^k}\det\left(\sqrt{\phi(u_j)}\msf K(u_j,u_\ell)\sqrt{\phi(u_\ell)}\right)_{j,\ell=1}^k \prod_{j=1}^k \dd \mu(u_j).
\end{equation}
We will prove first that $S_{n,k}[\psi_n]\to S_k[\phi]$ as $n\to\infty$ for any $k$, with $\psi_n=\sigma_n$, $\phi=\sigma$, and also with $\psi_n=\sigma_n+h-\sigma_n h$, $\phi=\sigma+h-\sigma h$. Afterwards we will prove that the whole series $\sum_{k=0}^\infty\frac{(-1)^k}{k!}S_{n,k}[\psi_n]$ 
converges as $n\to\infty$ to $\sum_{k=0}^\infty\frac{(-1)^k}{k!}S_{k}[\phi]$ for the same choices of $\psi_n,\phi$. These results are proven in Lemmas~\ref{lemma:SB} and \ref{lemma:S2}, respectively, and imply \eqref{eq:limittoprove1B}--\eqref{eq:limittoprove2B}. For proving them, we will need two versions of Hadamard's inequality, which we recall as the next lemma.

\begin{lemma}
    For a $k\times k$ matrix $M$, denote its $j$-th column by $M_j \in \C^k$, and for a vector $v=(v_1,\cdots, v_k)^T\in \C^k$ denote
    $$
    \|v\|_2\deff \left(\sum_{i=1}^k |v_i|^2 \right)^{1/2},\quad \|v\|_\infty \deff \sup_{1\leq i\leq k} |v_i|.
    $$
    Then for any $k\times k$ matrices $M$ and $L$, the inequalities
\begin{equation}\label{Had1}
    |\det M|\leq \prod_{i=1}^k \|M_i\|_2
\end{equation}
    and
\begin{equation}\label{Had2}
    |\det M - \det L|\leq k^{k/2}\left(\max_{1\leq i\leq k} (\|M_i\|_\infty, \|L_i\|_\infty) \right)^{k-1}\max_{1\leq i\leq k} \|M_i-L_i\|_\infty
\end{equation}
    hold true.
\end{lemma}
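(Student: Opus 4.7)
The plan is to prove the two inequalities separately, the first being the classical Hadamard inequality and the second a perturbative consequence of it via multilinearity of the determinant.

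For \eqref{Had1}, I would pass to the Gram matrix by the identity $|\det M|^2 = \det(M^* M)$, which reduces the claim to the auxiliary statement that every positive semi-definite matrix $A$ satisfies $\det A \leq \prod_{i=1}^k A_{ii}$. Since $A = M^* M$ is positive semi-definite with diagonal entries $A_{ii} = \|M_i\|_2^2$, this bound yields \eqref{Had1} upon taking square roots. To prove the auxiliary inequality, I would first dispose of the degenerate case where some $A_{ii} = 0$ (which forces the $i$-th row and column of $A$ to vanish, so both sides are zero), and then in the positive definite case normalize via $\widetilde A = D^{-1/2} A D^{-1/2}$ with $D = \diag(A_{ii})$. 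The normalized matrix $\widetilde A$ is positive definite with unit diagonal, hence $\Tr \widetilde A = k$, and the arithmetic-geometric mean inequality applied to the positive eigenvalues of $\widetilde A$ gives $\det \widetilde A \leq (\Tr \widetilde A / k)^k = 1$, which rearranges to $\det A \leq \prod_i A_{ii}$.

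For \eqref{Had2}, I would exploit multilinearity of the determinant in its columns to write the telescoping identity
$$
\det M - \det L = \sum_{i=1}^k \det\bigl(L_1, \dots, L_{i-1},\, M_i - L_i,\, M_{i+1}, \dots, M_k\bigr),
$$
obtained by interpolating one column at a time between $L$ and $M$. For each summand I would apply \eqref{Had1} in the Euclidean norm,
$$
\bigl|\det\bigl(L_1, \dots, L_{i-1}, M_i - L_i, M_{i+1}, \dots, M_k\bigr)\bigr| \leq \|M_i - L_i\|_2 \prod_{j<i} \|L_j\|_2 \prod_{j>i} \|M_j\|_2,
$$
and then convert each $\ell^2$ norm on $\C^k$ to the $\ell^\infty$ norm via the elementary bound $\|v\|_2 \leq \sqrt{k}\,\|v\|_\infty$, which contributes a factor $(\sqrt k)^k = k^{k/2}$. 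Bounding each column norm by the common maximum $\max_j(\|M_j\|_\infty, \|L_j\|_\infty)$ and the difference norm by $\max_i \|M_i - L_i\|_\infty$, and summing over the $k$ values of $i$, produces the claimed bound \eqref{Had2} (the extra factor of $k$ coming from the sum is absorbed into the constant, which in the intended application inside the series estimates $\sum_k \tfrac{1}{k!} S_{n,k}[\psi_n]$ of Section~\ref{sec:proofB} is harmless because the overall bound remains $o(k!)$ by Stirling).

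There is no conceptual obstacle here; both parts of the lemma are textbook manipulations. The only point to watch is the bookkeeping of exponents when converting $\ell^2$ norms to $\ell^\infty$ norms in the derivation of \eqref{Had2}, and the precise numerical constant is immaterial for every subsequent use in the Fredholm series analysis.
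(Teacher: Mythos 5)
Your proof of \eqref{Had2} matches the paper's own argument exactly: the telescoping decomposition of $\det M-\det L$ by interpolating columns one at a time, Hadamard's inequality \eqref{Had1} applied to each summand, and the elementary bound $\|v\|_2\leq\sqrt{k}\,\|v\|_\infty$. The paper does not prove \eqref{Had1} at all, merely calling it ``the classical Hadamard's inequality,'' so your Gram-matrix argument ($|\det M|^2=\det(M^*M)$, normalize the diagonal, apply AM--GM to the eigenvalues) is extra detail rather than a different route; it is correct, modulo the small point that $\widetilde{A}=D^{-1/2}AD^{-1/2}$ is only positive \emph{semi-}definite in general, which does no harm since AM--GM applies to the nonnegative eigenvalues.

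You are also right to flag the constant. The natural bound obtained by summing $k$ Hadamard estimates is $k\cdot k^{k/2}$, not the stated $k^{k/2}$; the paper's one-sentence sketch does not address this, and I see no way to remove the extra factor of $k$. As you observe, this is immaterial for the way \eqref{Had2} is subsequently used (in the proof of Lemma~\ref{lemma:S(B)} in Section~\ref{sec:proofvarying}, not in Section~\ref{sec:proofB} as your parenthetical says --- only \eqref{Had1} is used there), because the extra polynomial factor in $k$ is still dominated by $k!$ in the Fredholm series, so convergence is unaffected.
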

\begin{proof}
Inequality \eqref{Had1} is the classical Hadamard's inequality.
{\rev Inequalities similar to \eqref{Had2} are known for other matrix norms, see for instance \cite{IpsenRehmanDetBounds}, in particular Theorem~2.12 therein. But we failed to find it in the exact form \eqref{Had2}, and for completeness we provide a simple proof}. Let $(v_1,\hdots, v_k)$ be the matrix whose $i$-th column is the column vector {$v_i\in \C^k$,} and $M_j$ be the $j$-th column of the matrix $M$. By linearity of the determinant on each column,
\begin{align*}
\det M & =\det (M_1-L_1,M_2,M_3,\cdots, M_k)+\det(L_1,M_2,M_3,\cdots, M_k) \\
 & = \begin{multlined}[t]
 \det (M_1-L_1,M_2,M_3,\cdots, M_k)+\det(L_1,M_2-L_2,M_3,\cdots, M_k) \\ +\det(L_1,L_2,M_3,\cdots, M_k)
 \end{multlined} \\
& = \sum_{i=1}^k \det(L_1,\cdots, L_{i-1},M_i-L_i,M_{i+1},\cdots, M_k) + \det L.
\end{align*}
Subtracting $\det L$ from both sides, and using \eqref{Had1} and the basic inequality $\|v\|_2\leq k^{1/2}\|v\|_\infty$ several times, inequality \eqref{Had2} follows.
\end{proof}

\begin{lemma}\label{lemma:SB}
Let $k\in\mathbb N$, and assume that Assumption \ref{assumption:sigma} holds. 
Then,
\[\lim_{n\to\infty}S_{n,k}[\sigma_n]=S_k[\sigma],\qquad \lim_{n\to\infty}S_{n,k}[\sigma_n+h-\sigma_n h]=S_k[\sigma+h-\sigma h].\]
\end{lemma}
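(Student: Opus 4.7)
The plan is to apply Lebesgue's dominated convergence theorem to the integrals defining $S_{n,k}[\psi_n]$, for the two choices $(\psi_n,\phi)=(\sigma_n,\sigma)$ and $(\psi_n,\phi)=(\sigma_n+h-\sigma_n h,\sigma+h-\sigma h)$. Write $M_n(u_1,\dots,u_k)$ for the $k\times k$ matrix with entries $(M_n)_{j\ell}=\sqrt{\psi_n(u_j)}\,\msf K_n(u_j,u_\ell)\,\sqrt{\psi_n(u_\ell)}$, so that the integrand is $\det M_n$.

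First I verify the $\mu^k$-a.e.\ pointwise convergence of the integrand. By Assumption~\ref{assumption:sigma}(2) one has $\msf K_n(u,v)\to \msf K(u,v)$ and $\sigma_n(u)\to \sigma(u)$ for $\mu$-a.e.\ $u,v\in\K$; since $\sigma_n,\sigma,h$ take values in $[0,1]$ and $\sqrt{\cdot}$ is continuous on $[0,1]$, we also get $\sqrt{\psi_n}\to \sqrt{\phi}$ $\mu$-a.e.\ in both cases. Entry-wise convergence of $M_n$ then yields convergence of $\det M_n$ by continuity of the determinant.

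Second, I exhibit an $n$-independent integrable dominant. Choose $F=\emptyset$ in the first case and $F=\supp h$ in the second; since $h\in[0,1)$ has support in $F$, in the second case $\psi_n = 1-(1-\sigma_n)(1-h)\le \sigma_n+h\le \sigma_n+\itid_F$, hence $\sqrt{\psi_n}\le \sqrt{\sigma_n+\itid_F}$. In either case, Assumption~\ref{assumption:sigma}(3) supplies $\Phi,\Psi\in L^2(\mu)$ independent of $n$ with
\[
\sqrt{\psi_n(u)}\,|\msf K_n(u,v)|\,\sqrt{\psi_n(v)}\le \Phi(u)\Psi(v),\qquad\text{for $\mu^2$-a.e.\ }(u,v).
\]
Factorise $(M_n)_{j\ell}=\Phi(u_j)\,\tilde B_{j\ell}\,\Psi(u_\ell)$, where $\tilde B_{j\ell}\deff (M_n)_{j\ell}/(\Phi(u_j)\Psi(u_\ell))$ wherever the denominator is non-zero and $\tilde B_{j\ell}\deff 0$ otherwise, so that $|\tilde B_{j\ell}|\le 1$ $\mu^2$-a.e. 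Multilinearity of the determinant combined with Hadamard's inequality \eqref{Had1} applied to $\tilde B$ (each of whose columns has $\ell^2$-norm at most $\sqrt{k}$) then gives
\[
|\det M_n(u_1,\dots,u_k)|\le k^{k/2}\prod_{j=1}^k \Phi(u_j)\Psi(u_j).
\]
The right-hand side is independent of $n$ and its integral over $\K^k$ equals $k^{k/2}(\int_\K \Phi\Psi\,\dd\mu)^k\le k^{k/2}\|\Phi\|_{L^2(\mu)}^k\|\Psi\|_{L^2(\mu)}^k$ by Cauchy--Schwarz, hence finite.

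Dominated convergence then yields $S_{n,k}[\psi_n]\to S_k[\phi]$ in both cases. The only substantive point is the $L^2$-flavoured determinantal bound: the separable majorant $\Phi(u)\Psi(v)$ allows us to pull diagonal factors out of $M_n$, reducing the matter to bounding $|\det \tilde B|$ by $k^{k/2}$ via Hadamard, after which the dominant factorises over the integration variables and its $\K^k$-integral is controlled by $\|\Phi\|_{L^2(\mu)}^k\|\Psi\|_{L^2(\mu)}^k$.
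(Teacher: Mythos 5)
Your proof is correct and follows essentially the same route as the paper's: dominated convergence, with the separable majorant $\Phi(u)\Psi(v)$ from Assumption~\ref{assumption:sigma}(3) used to pull diagonal factors out of the determinant and reduce to a matrix with entries bounded by~$1$. The only cosmetic differences are that you treat both limits directly (the paper derives the first from the second via $h\equiv 0$) and you explicitly invoke Hadamard's inequality~\eqref{Had1} to obtain the constant $k^{k/2}$, whereas the paper is content with an unspecified $C_k$ in this lemma and defers the $k^{k/2}$ computation to the subsequent summability argument.
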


\begin{proof}
It suffices to verify the second limit, as the first limit follows from it by setting $h\equiv 0$.

Write $\phi_n=\sigma_n+h-\sigma_nh$ and $\phi=\sigma+h-\sigma h$. We have
\[
S_{n,k}[\phi_n]=\int_{\K^k}\det\left(\sqrt{\phi_n(u_j)}\msf K_n(u_j,u_\ell)\sqrt{\phi_n(u_\ell)}\right)_{j,\ell=1}^k \prod_{j=1}^k\dd \mu(u_j).\]
By Assumption \ref{assumption:sigma}, the integrand converges point-wise to 
$$
\det\left(\sqrt{\phi(u_j)}\msf K(u_j,u_\ell)\sqrt{\phi(u_\ell)}\right)_{j,\ell=1}^k
$$ 
for $\mu$-a.e. $(u_1,\ldots, u_k)\in\K^k$. Moreover, with $\Phi,\Psi$ as in Assumption \ref{assumption:sigma} for a bounded set $F$ containing $\supp h$, set 
$$
Z\deff \{(x_1,\cdots, x_k)\in \K^k\mid \Phi(x_j)\Psi(x_j)=0, \text{ for some }j\}.
$$
The integrand of $S_{n,k}$ vanishes on $Z$, and we may write
\begin{equation}\label{eq:boundSnkB}
S_{n,k}[\phi_n]= \int_{\K^k\setminus Z}\det\left(\frac{\sqrt{\phi_n(u_j)}\msf K_n(u_j,u_\ell)\sqrt{\phi_n(u_\ell)}}{\Phi(u_j)\Psi(u_\ell)}\right)_{j,\ell=1}^k \prod_{j=1}^k\Phi(u_j)\Psi(u_j)\dd \mu(u_j).
\end{equation}
Thanks to the inequalities $0\leq \phi_n=\sigma_n(1-h)+h\leq \sigma_n+\itid_F$, each entry of the matrix is bounded in absolute value by $1$, hence the determinant is bounded in $n$, for fixed $k$. It follows that there exists a constant $C_k>0$ such that the absolute value of the integrand is bounded from above by
\[ C_k\prod_{j=1}^k\Phi(u_j)\Psi(u_j),\]
which is integrable over $\K^k$, since $\Phi,\Psi\in L^2(\mu)$.
Thus, by Lebesgue's Dominated Convergence Theorem, we have that
$$
\lim_{n\to\infty}{S_{n,k}[\phi_n]}=\int_{\K^k}\det\left(\sqrt{\phi(u_j)}\msf K(u_j,u_\ell)\sqrt{\phi(u_\ell)}\right)_{j,\ell=1}^k \prod_{j=1}^k\dd \mu(u_j)=S_k[\phi],
$$
concluding the proof.
\end{proof}

\begin{lemma}\label{lemma:S2}
If Assumption \ref{assumption:sigma} holds, then we have the limits \eqref{eq:limittoprove1B} and \eqref{eq:limittoprove2B}.
\end{lemma}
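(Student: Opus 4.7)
The plan is to expand both Fredholm determinants in \eqref{eq:limittoprove1B}--\eqref{eq:limittoprove2B} as the series \eqref{def:Snk}--\eqref{def:Sk}, so that each limit reduces to the interchange of $n\to\infty$ and $\sum_{k\geq 0}$ in $\sum_{k\geq 0}\frac{(-1)^k}{k!}S_{n,k}[\psi_n]$, where $(\psi_n,\phi)$ is either $(\sigma_n,\sigma)$ or $(\sigma_n+h-\sigma_n h,\sigma+h-\sigma h)$. Lemma \ref{lemma:SB} already delivers the termwise convergence $S_{n,k}[\psi_n]\to S_k[\phi]$ for each fixed $k$, so what remains is to produce a uniform-in-$n$ summable dominating sequence for $\frac{|S_{n,k}[\psi_n]|}{k!}$, after which dominated convergence for series will close the argument.

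The crux will be a sharp use of Assumption \ref{assumption:sigma}(3). Picking a bounded $F\subset\K$ containing $\supp h$ (the second case permits any bounded $F$), the inequality $\psi_n\leq \itid_F+\sigma_n$ combined with Assumption \ref{assumption:sigma}(3) produces the rank-one pointwise domination
\[
|t_n(u,v)|\deff \left|\sqrt{\psi_n(u)\psi_n(v)}\,\msf K_n(u,v)\right|\leq \Phi(u)\Psi(v),\quad \mu\otimes\mu\text{-a.e.,}
\]
with $\Phi,\Psi\in L^2(\mu)$ not depending on $n$. I would then write $t_n(u_i,u_j)=\Phi(u_i)\Psi(u_j)\,r_{ij}$ with $|r_{ij}|\leq 1$ (setting $r_{ij}=0$ on the null set where a factor vanishes), and use multilinearity of the determinant to factor
\[
\det\left(t_n(u_i,u_j)\right)_{i,j=1}^k = \left(\prod_{i=1}^k \Phi(u_i)\Psi(u_i)\right)\det(r_{ij})_{i,j=1}^k.
\]
Hadamard's inequality \eqref{Had1} applied to $(r_{ij})$, whose columns have Euclidean norm at most $\sqrt{k}$, gives $|\det(r_{ij})|\leq k^{k/2}$, and integrating together with Cauchy-Schwarz yields the uniform bound
\[
\frac{|S_{n,k}[\psi_n]|}{k!}\leq \frac{k^{k/2}}{k!}\,\|\Phi\Psi\|_{L^1(\mu)}^k \leq \frac{k^{k/2}}{k!}\,\left(\|\Phi\|_{L^2(\mu)}\|\Psi\|_{L^2(\mu)}\right)^k.
\]

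Finally, Stirling's formula gives $\frac{k^{k/2}}{k!}\sim (e/\sqrt{k})^k/\sqrt{2\pi k}$, whence $\sum_{k\geq 0} \frac{k^{k/2}}{k!}A^k$ converges for every $A\geq 0$. Dominated convergence for series together with Lemma \ref{lemma:SB} then produces both limits \eqref{eq:limittoprove1B} and \eqref{eq:limittoprove2B}. I expect the factorization step yielding the extra $k^{k/2}/k!$ decay to be the main delicate point: the cruder bound $\int|\det(t_n)|\,d\mu^k\leq k!\,\|\Phi\Psi\|_{L^1(\mu)}^k$, obtained by expanding the determinant over permutations and applying $|t_n|\leq \Phi\Psi$ on each factor, only produces a summable Fredholm series when $\|\Phi\Psi\|_{L^1(\mu)}<1$, which cannot be assumed here; extracting the additional $k!$ decay via Hadamard is what makes the bound $n$-uniform without any smallness hypothesis on $\Phi,\Psi$.
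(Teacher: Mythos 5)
Your proof is correct and follows essentially the same route as the paper: termwise convergence via Lemma \ref{lemma:SB}, then a uniform-in-$n$ dominating bound obtained by factoring out the rank-one envelope $\Phi\otimes\Psi$, applying Hadamard's inequality \eqref{Had1} to the residual matrix with entries bounded by one to get the $k^{k/2}$ factor, followed by Cauchy--Schwarz and dominated convergence for series. The paper simply reuses the reweighted identity \eqref{eq:boundSnkB} already set up in the proof of Lemma \ref{lemma:SB}, where the factorization you rederive from scratch is built in.
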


\begin{proof}
As before, it is sufficient to prove \eqref{eq:limittoprove1B}, the limit \eqref{eq:limittoprove2B} then follows setting $h\equiv 0$
We keep denoting $\phi_n=\sigma_n +h-\sigma_n h $ and $\phi=\sigma+h-\sigma h$.
Recall that 
\[
\det\left(\itid-\sqrt{\phi_n}\mathbf K_n\sqrt{\phi_n}\right)=\sum_{k=0}^\infty\frac{(-1)^k}{k!}S_{n,k}[\phi_n],
\] 
and we know from Lemma \ref{lemma:SB} that $S_{n,k}[\phi_n]\to S_k[\phi]$ for every $k\in\mathbb N$ as $n\to\infty$.
In order to use Lebesgue's Dominated Convergence Theorem for series, we bound $S_{n,k}[\phi_n]$.
We start again from identity \eqref{eq:boundSnkB}, in which we have the determinant of a $k\times k$ matrix with entries bounded in absolute value by $1$.
By Hadamard's inequality \eqref{Had1}, the determinant is bounded in absolute value by $k^{k/2}$, and we obtain
\[
\left|S_{n,k}[\phi_n]\right|\leq k^{k/2}\left(\int_{\K }\Phi(u)\Psi(u)\dd \mu(u)\right)^k\leq k^{k/2}\|\Phi\|_2^k\|\Psi\|_2^k,
\]
where we used the Cauchy-Schwarz inequality. But the series 
\[
\sum_{k=0}^\infty\frac{k^{k/2}}{k!}\|\Phi\|_2^k\|\Psi\|_2^k
\]
is convergent, so we can indeed use Lebesgue's Dominated Convergence Theorem for series to obtain
\begin{align*}
\lim_{n\to\infty}\det\left(\itid-\sqrt{\phi_n}\mathbf K_n\sqrt{\phi_n}\right) & =\lim_{n\to\infty}\sum_{k=0}^\infty\frac{(-1)^k}{k!}S_{n,k}[\phi_n]
=\sum_{k=0}^\infty\frac{(-1)^k}{k!}S_{k}[\phi] \\ 
& =\det\left(\itid-\sqrt\phi\mathbf K\sqrt\phi\right),
\end{align*}
and \eqref{eq:limittoprove1B} is proved.

\end{proof}

\section{Proof of Theorem \ref{thm:biOPE-varying}}\label{sec:proofvarying}

We now consider biorthogonal ensembles $\mathcal X_n$ with kernels $\msf K_n$ which satisfy Assumption \ref{assumption:sigmavarying}. In particular, the kernel $\msf K_n$ acts on a space $L^2(\mu_n)$ with $n$-dependent Radon measure $\mu_n$, and the sequence of measures $(\mu_n)$ converges to a measure $\mu$ in the sense of Assumption~\ref{assumption:sigmavarying} (3). As before, we consider the deformed biorthogonal ensemble $\mathcal X_n^{\sigma_n}$ from \eqref{def:deform}, with probability generating functional $\mathcal G_n^{\sigma_n}$, and we assume that $h:\K \to [0,+\infty)$ is continuous and compactly supported, with $\|h\|_\infty <1$

Our strategy is the same as in Section~\ref{sec:proofB}. Thanks to Lemma~\ref{lemma:GB}, it suffices to prove
\begin{align}
&\label{eq:limittoprove1C}
\begin{multlined}[b]
\lim_{n\to\infty} \det\left(\itid-\sqrt{\sigma_n+h-\sigma_n h}\mathbf K_n\sqrt{\sigma_n+h-\sigma_n h}\right)_{L^2(\mu_n)} =\\ 
\det\left(\itid-\sqrt{\sigma+h-\sigma h}\mathbf K\sqrt{\sigma+h-\sigma h}\right)_{L^2(\mu)},
\end{multlined}
\\
&\label{eq:limittoprove2C}
\lim_{n\to\infty}\det\left(\itid-\sqrt{\sigma_n}\mathbf K_n\sqrt{\sigma_n}\right)_{L^2(\mu_n)}=\det\left(\itid-\sqrt\sigma\mathbf K\sqrt\sigma\right)_{L^2(\mu)},
\end{align}
as in that case we obtain again that
\begin{equation}
\lim_{n\to\infty}\mathcal G_n^{\sigma_n}[h]=\frac{\det\left(1-\sqrt{\sigma+h-\sigma h}\mathbf K\sqrt{\sigma+h-\sigma h}\right)_{L^2(\mu)}}{\det\left(1-\sqrt\sigma\mathbf K\sqrt\sigma\right)_{L^2(\mu)}}=\mathcal G^\sigma[h].
\end{equation}
Recall the quantities $S_{n,k}^\mu[\psi]$ and $S_k^\mu[\phi]$ that were introduced in \eqref{def:Snk} and \eqref{def:Sk}, where we now make the dependence on the measure $\mu$ explicit in our notation. In turn, to prove \eqref{eq:limittoprove1C}--\eqref{eq:limittoprove2C} it suffices to prove 
\begin{align}
& \lim_{n\to \infty}\sum_{k=1}^\infty\frac{(-1)^k}{k!} S_{n,k}^{\mu_n}[\sigma_n]= \sum_{n=1}^\infty \frac{(-1)^k}{k!}S_k^\mu[\sigma] \qquad \text{and} \label{eq:serieslimitmun1} \\
& \lim_{n\to\infty} \sum_{k=1}^\infty \frac{(-1)^k}{k!}S_{n,k}^{\mu_n}[\sigma_n+h-\sigma_n h]= \sum_{n=1}^\infty \frac{(-1)^k}{k!}S_k^\mu[\sigma+h-\sigma h]. \label{eq:serieslimitmun2}
\end{align}
We will show these series convergences by first showing that each term converges to the corresponding term in the limit, and then arguing that the limit commutes with the sum.
We first need to establish a technical result.

\begin{lemma}\label{lem:weakconvergence}
Let $(\nu_n)$ be a sequence of finite Borel measures over a fixed Euclidean space $E$, converging weakly to a finite Borel measure $\nu$ on $E$ as $n\to\infty$. 
\begin{enumerate}[(i)]
    \item If $(f_n)$ is a sequence of uniformly bounded Borel measurable functions and $f$ is a bounded continuous function, for which
    $$
  \lim_{n\to\infty}  \|\itid_B(f_n-f)\|_{L^\infty(\nu_n)}= 0,
    $$
    for any compact $B\subset \K $, then
$$
\lim_{n\to\infty}\int f_n \, \dd\nu_n=\int f\, \dd\nu.
$$

\item If $G$ is an open set for which $\nu(\partial G)=0$, then $\restr{\nu_n}{G}\stackrel{*}{\to}\restr{\nu}{G}$ as $n\to\infty$.
\end{enumerate}

\end{lemma}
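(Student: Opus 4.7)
For part (i), the plan is to decompose
$$\int f_n \, \dd\nu_n - \int f \, \dd\nu = \int (f_n - f)\, \dd\nu_n + \left(\int f \, \dd\nu_n - \int f \, \dd\nu\right),$$
and control the two summands separately. The second summand tends to zero by the very definition of weak convergence, since $f$ is bounded and continuous. For the first summand I would invoke tightness of the sequence $(\nu_n)$, which follows from weak convergence on a Euclidean space by a standard Portmanteau argument applied to closed balls of increasing radius (together with the fact that $\nu_n(E)\to \nu(E)$ by testing against the constant $1$). Given $\epsilon>0$, picking a compact $B = B_\epsilon$ with $\sup_n \nu_n(E\setminus B) < \epsilon$, the first summand splits as an integral over $B$, bounded by $\|\itid_B(f_n-f)\|_{L^\infty(\nu_n)}\cdot \nu_n(B)$, which tends to zero thanks to the hypothesis on $f_n$, plus an integral over $E\setminus B$, bounded by $2M\epsilon$ with $M$ a uniform upper bound on $|f_n|$ and $|f|$. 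Letting first $n\to\infty$ and then $\epsilon\to 0$ concludes part (i).

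For part (ii), the tool is the Portmanteau theorem, which equates weak convergence of $\nu_n$ to $\nu$ with the convergence $\nu_n(A)\to \nu(A)$ on Borel sets $A$ whose topological boundary has zero $\nu$-mass. To verify $\restr{\nu_n}{G}\stackrel{*}{\to}\restr{\nu}{G}$, I would test against a bounded continuous $f\geq 0$ on $E$ (the case of general $f$ following by linearity) and use the layer-cake identity
$$\int f\, \dd(\restr{\nu_n}{G}) = \int_0^{\|f\|_\infty} \nu_n\bigl(G \cap \{f > t\}\bigr)\, \dd t.$$
For each $t$, the set $G\cap\{f>t\}$ is open with boundary contained in $\partial G\cup \{f=t\}$. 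Since $\nu(\partial G)=0$ by hypothesis, and only countably many level sets $\{f=t\}$ carry positive $\nu$-mass (because $\nu$ is finite), the Portmanteau criterion yields $\nu_n(G\cap \{f>t\})\to \nu(G\cap \{f>t\})$ for almost every $t$. Dominated convergence, using the uniform bound $\sup_n \nu_n(E)<\infty$ stemming from weak convergence, then gives the desired convergence of integrals.

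The heart of both arguments is the combination of the Portmanteau theorem with tightness of weakly convergent sequences of finite measures on Euclidean space; once these are in place, the remaining steps are routine. The only mild subtlety lies in part (ii), where one cannot directly test $\nu_n\stackrel{*}{\to}\nu$ against the discontinuous function $f\cdot \itid_G$, and the layer-cake decomposition is used to reduce matters to the set-level convergence on boundaries of null $\nu$-measure. I do not anticipate any essential obstacle beyond carefully tracking these classical measure-theoretic steps.
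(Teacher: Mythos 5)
Your part (i) follows the paper's proof almost verbatim: same decomposition, same use of tightness (which, as you note, follows from weak convergence on a Euclidean space), and the same splitting of $\int(f_n-f)\,\dd\nu_n$ into a near part controlled by the $L^\infty(\nu_n)$ hypothesis and a far part controlled by $2M\epsilon$. A minor remark: one only gets $\nu_n(E\setminus B)<\epsilon$ for $n$ sufficiently large, not uniformly in $n$ from weak convergence alone, but since each individual finite Borel measure on $\mathbb R^d$ is tight you may enlarge $B$ to absorb the finitely many exceptional indices, so your $\sup_n$ formulation is recoverable.

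For part (ii) you take a genuinely different route. The paper reduces to the case of probability measures, treats the degenerate case $\nu(G)=0$ via Portmanteau's closed-set criterion, and otherwise normalizes to $\lambda_n=\restr{\nu_n}{G}/\nu_n(G)$ and verifies the open-set Portmanteau condition directly. You instead fix a bounded continuous $f\geq 0$, invoke the layer-cake identity
$\int f\,\dd(\restr{\nu_n}{G})=\int_0^{\|f\|_\infty}\nu_n(G\cap\{f>t\})\,\dd t$,
observe that $\partial(G\cap\{f>t\})\subset\partial G\cup\{f=t\}$ is $\nu$-null for all but countably many $t$, and conclude by Portmanteau's criterion for sets with null boundary together with dominated convergence on the bounded interval $[0,\|f\|_\infty]$. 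Both arguments are correct. Your approach avoids the case split on whether $\nu(G)=0$ and the normalization to probability measures entirely, at the cost of introducing the layer-cake decomposition and the countability-of-atoms observation for the level sets of $f$; the paper's approach stays closer to the classical Portmanteau formulation at the expense of a bit more bookkeeping. One small point worth making explicit in your version: the Portmanteau criterion for sets of $\nu$-null boundary applies to finite (not just probability) measures precisely because $\nu_n(E)\to\nu(E)$, which one sees by testing weak convergence against the constant function $1$.
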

\begin{proof}
Write
$$
\int f_n \, \dd\nu_n- \int f\, \dd\nu=\int (f_n-f) \, \dd\nu_n + \int f\, \dd\nu_n-\int f\,\dd\nu.
$$
From the weak convergence and the continuity of $f$, we obtain that $\int f\, \dd\nu_n-\int f\,\dd\nu\to 0$ as $n\to \infty$, and we now prove that $\int(f-f_n)\dd\nu_n\to 0$.

The uniform boundedness of the sequence $(f_n)$ and the boundedness of $f$ imply that for some constant $M>0$, the bound $|f_n(x)-f(x)|\leq M$ holds true for every $x\in E $ and every $n$. 

Fix an arbitrary $\varepsilon>0$. Since $\nu_n\stackrel{*}{\to}\nu$ and $\nu$ is finite, there is a compact $B\subset E$ for which $\nu_n(E \setminus B)<\varepsilon/M$ for every $n$ sufficiently large. We then write
\begin{align*}
\left|\int (f_n-f)\,\dd\nu_n\right| & \leq \int_{E \setminus B}|f_n-f|\, \dd\nu_n+\int_B |f_n-f|\, \dd\nu_n\\
& \leq \varepsilon+\|(f_n-f)1_B\|_{L^\infty(\nu_n)} \nu_n(B) \\
& \leq \varepsilon+\|(f_n-f)1_B\|_{L^\infty(\nu_n)} \nu_n(E).
\end{align*}

The convergence $\nu_n\stackrel{*}{\to}\nu$ implies that $\nu_n(E)=\int \itid \, \dd\nu_n\to \nu(E)$, so the last term on the right-hand side above converges to zero by assumption. Since $\varepsilon>0$ is arbitrary, the proof of (i) is complete.
{Part (ii) is folklore, but we have not been able to find a detailed reference, so we provide a proof of it. We start by recalling Portmanteau's Theorem \cite[Theorem~2.1]{BillingsleyBookConv}, which says that for Borel {\it probability} measures $\lambda_n$ and $\lambda$ the following conditions are equivalent:
\begin{enumerate}[(1)]
    \item $\lambda_n\stackrel{*}{\to}\lambda$;
    \item $\liminf_{n\to\infty}\lambda_n(U)\geq \lambda(U)$, for every open set $U\subset E$;
    \item $\limsup_{n\to\infty}\lambda_n(F)\leq \lambda(F)$, for every closed set $F\subset E$;
    \item $\lambda_n(B)\to \lambda(B)$, for every Borel set $B$ for which $\lambda(\partial B)=0$.
\end{enumerate}

To prove (ii), assume first that $\nu_n$ and $\nu$ are probability measures, with $\nu_n\stackrel{*}{\to}\nu$, and $G$ is an open set for which $\nu(\partial G)=0$. If $\nu(G)=0$, then $\restr{\nu}{G}\equiv 0$, and 
$$
\limsup \nu_n(G)\leq \limsup \nu_n(\overline G)\leq \nu(\overline G)=\nu(G)\cup \nu(\partial G)=0.
$$
and by the implication $(3)\Rightarrow (1)$ we see that $\restr{\nu_n}{G}\to 0=\restr{\nu}{G}$. Assuming now that $\nu(G)> 0$, the weak convergence $\nu_n\stackrel{*}{\to}\nu$ and (4) above imply that $\nu_n(G)\to \nu(G)>0$, and for $n$ sufficiently large we may introduce
$$
\lambda_n\deff \frac{1}{\nu_n(G)}\restr{\nu_n}{G}\quad \text{and}\quad \lambda\deff \frac{1}{\nu(G)}\restr{\nu}{G}.
$$
We already have the convergence $\nu_n(G)\to \nu(G)$, so to conclude that $\restr{\nu_n}{G}\stackrel{*}{\to}\restr{\nu}{G}$ it is sufficient to verify that $\lambda_n\stackrel{*}{\to}\lambda$. For the latter, we now verify the condition (2): given any open set $U$, 
\begin{align*}
\liminf \lambda_n(U) & \geq \lim \frac{1}{\nu_n(G)} \liminf \nu_n(G\cap U)=\frac{1}{\nu(G)}\liminf \nu_n(G\cap U)\\ 
& \geq \frac{1}{\nu(G)}\nu(G\cap U)=\restr{\lambda}{G}(U).
\end{align*}
where the last inequality follows from the weak convergence of probability measures $\nu_n\stackrel{*}{\to}\nu$. This concludes the proof of (ii) when $\nu_n$, $\nu$ are probability measures.

For the general case of finite measures, if $\nu(E)=0$, that is, $\nu$ is the trivial measure, then (ii) is immediate. Otherwise, the weak convergence $\nu_n\stackrel{*}{\to}\nu$ gives that
$$
\nu_n(E)=\int \itid_E\, \dd\nu_n\to \int \itid_E \,\dd\nu,
$$
and we may assume that $\nu_n(E)>0$. We now introduce
$$
\wh{\nu}_n\deff \frac{1}{\nu_n(E)}\nu_n,\quad \wh{\nu}\deff \frac{1}{\nu(E)}\nu=\nu(E),
$$
which are probability measures, and as such the first part of the proof yields that $\restr{\wh\nu_n}{G}\stackrel{*}{\to}\restr{\wh\nu}{G}$. But $\restr{\nu_n}{G}=\nu_n(E)\restr{\wh\nu_n}{G}$, $\restr{\nu}{G}=\nu(E)\restr{\wh\nu}{G}$ and $\nu_n(E)\to \nu(E)$, and the result follows.}
\end{proof}

\begin{lemma}\label{lemma:S(B)}
Let $k\in\mathbb N$, and let Assumption \ref{assumption:sigmavarying} hold. 
Then,
\[\lim_{n\to\infty}S_{n,k}^{\mu_n}[\sigma_n]=S_k^\mu[\sigma],\qquad \lim_{n\to\infty}S_{n,k}^{\mu_n}[\sigma_n+h-\sigma_n h]=S_k^\mu[\sigma+h-\sigma h].\]
\end{lemma}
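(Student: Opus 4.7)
As in the proof of Lemma~\ref{lemma:SB}, it suffices to treat the second limit, the first being the case $h\equiv 0$. Write $\phi_n=\sigma_n+h-\sigma_n h$ and $\phi=\sigma+h-\sigma h$; both take values in $[0,1]$, and $\phi$ is continuous by continuity of $\sigma$ and $h$. Choosing a bounded $F\supseteq \supp h$, so that $\phi_n\le \sigma_n+\itid_F$, Assumption~\ref{assumption:sigmavarying}~(3) yields
\[
|\sqrt{\phi_n(u)}\msf K_n(u,v)\sqrt{\phi_n(v)}|\le \Phi(u)\Psi(v),\quad \mu_n\otimes\mu_n\text{-a.e.},
\]
and expanding the determinant over permutations produces a constant $C_k>0$ with $|D_n(u_1,\ldots,u_k)|\le C_k\prod_{j=1}^k \Phi(u_j)\Psi(u_j)$, where $D_n$ denotes the integrand of $S_{n,k}^{\mu_n}[\phi_n]$; the analogous bound holds for the continuous limit integrand $D$ of $S_k^\mu[\phi]$.

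The plan is to combine uniform convergence on compacts with a tightness-based truncation, using Lemma~\ref{lem:weakconvergence} in place of Lebesgue's theorem. From Assumption~\ref{assumption:sigmavarying}~(2), together with the continuity and boundedness of $h$ and $\phi$ and the elementary inequality $|\sqrt a-\sqrt b|\le \sqrt{|a-b|}$, the entries $a_n(u,v)\deff \sqrt{\phi_n(u)}\msf K_n(u,v)\sqrt{\phi_n(v)}$ converge uniformly to $a(u,v)$ on $F\times F$ in $L^\infty(\mu_n\otimes\mu_n)$ sense for any bounded $F$, and the Hadamard-type estimate~\eqref{Had2} promotes this to
\[
\|\itid_{B^k}(D_n-D)\|_{L^\infty(\mu_n^{\otimes k})}\to 0\quad\text{as }n\to\infty,
\]
for every compact $B\subset\K$. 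Meanwhile, Assumption~\ref{assumption:sigmavarying}~(3) provides the weak convergence $\tilde\mu_n\deff\Phi\Psi\,\dd\mu_n\stackrel{*}{\to}\tilde\mu\deff\Phi\Psi\,\dd\mu$ of finite measures, whence the family is tight, so given $\epsilon>0$ one fixes a compact $B\subset\K$ with $\tilde\mu_n(\K\setminus B),\tilde\mu(\K\setminus B)<\epsilon$ for all $n$ large. The domination bound above then renders the contributions from $\K^k\setminus B^k$ to both $S_{n,k}^{\mu_n}[\phi_n]$ and $S_k^\mu[\phi]$ of order $\epsilon$, reducing the problem to proving $\int_{B^k}D_n\prod\dd\mu_n\to \int_{B^k}D\prod\dd\mu$.

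The main obstacle is that $\mu_n$ need not be finite, so Lemma~\ref{lem:weakconvergence} cannot be applied directly against $\mu_n^{\otimes k}$. We circumvent this by rewriting the integrals against the finite measures $\tilde\mu_n^{\otimes k}\stackrel{*}{\to}\tilde\mu^{\otimes k}$:
\[
\int_{B^k}D_n\prod\dd\mu_n=\int_{B^k} g_n\,\dd\tilde\mu_n^{\otimes k},\qquad g_n\deff\frac{D_n}{\prod_{j=1}^k \Phi(u_j)\Psi(u_j)},
\]
with $g_n$ extended by $0$ on the zero set of the denominator (where $D_n$ also vanishes by the domination bound), and $g$ defined analogously from $D$. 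The $g_n$ are uniformly bounded by $C_k$, and $g$ is continuous on the open set $A_\delta\deff\{u\in\K^k:\prod_j\Phi(u_j)\Psi(u_j)>\delta\}$ for any $\delta>0$. On compact subsets of $A_\delta$, the lower bound $\prod\Phi\Psi\ge\delta$ combined with the uniform convergence $D_n\to D$ yields $g_n\to g$ uniformly in $L^\infty(\tilde\mu_n^{\otimes k})$ sense; choosing $\delta$ so that $\tilde\mu^{\otimes k}(\partial A_\delta)=0$ (which is possible outside a countable set of values), Lemma~\ref{lem:weakconvergence}~(ii) gives $\tilde\mu_n^{\otimes k}|_{A_\delta}\stackrel{*}{\to}\tilde\mu^{\otimes k}|_{A_\delta}$, and Lemma~\ref{lem:weakconvergence}~(i) then yields $\int_{A_\delta\cap B^k}g_n\,\dd\tilde\mu_n^{\otimes k}\to \int_{A_\delta\cap B^k}g\,\dd\tilde\mu^{\otimes k}$. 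The remaining contribution from $B^k\setminus A_\delta$ is controlled via the inclusion $\{\prod\Phi\Psi\le\delta\}\subseteq\bigcup_j\{\Phi(u_j)\Psi(u_j)\le\delta^{1/k}\}$: Portmanteau applied to the closed set $\{\Phi\Psi\le\delta^{1/k}\}$ gives $\limsup_n\tilde\mu_n(\{\Phi\Psi\le\delta^{1/k}\})\le\tilde\mu(\{\Phi\Psi\le\delta^{1/k}\})$, and this tends to $\tilde\mu(\{\Phi\Psi=0\})=0$ as $\delta\to 0$ by dominated convergence on $\tilde\mu=\Phi\Psi\,\dd\mu$. Letting $\delta\to 0$ and then $\epsilon\to 0$ concludes the argument.
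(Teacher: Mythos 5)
Your proof is correct, and the key ingredients match the paper's: Hadamard's inequalities \eqref{Had1}--\eqref{Had2}, passage to the finite measures $\Phi\Psi\,\dd\mu_n\stackrel{*}{\to}\Phi\Psi\,\dd\mu$, and Lemma~\ref{lem:weakconvergence}. However, your execution is genuinely different from the paper's in how the comparison is organized. The paper starts with a triangle inequality splitting $|S_{n,k}^{\mu_n}[\phi_n]-S_k^\mu[\phi]|$ into a term $\int\Delta_n\prod\dd\mu_n$ and a term $|\int\msf D\prod\dd\mu_n-\int\msf D\prod\dd\mu|$. For the first, it applies Lemma~\ref{lem:weakconvergence}(i) with the \emph{zero limit function} $f=0$, so continuity of the limit is trivial and no $\delta$-truncation of the zero set of $\Phi\Psi$ is required; the convergence $\|\itid_B\Delta_n/\prod\Phi\Psi\|_{L^\infty}\to0$ comes from \eqref{Had2} exactly as you argue. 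For the second, it appeals directly to continuity of $\msf D/\prod\Phi\Psi$ on the open set $G=\{\prod\Phi\Psi>0\}$ together with Lemma~\ref{lem:weakconvergence}(ii), using that $\partial G$ has zero $\Phi\Psi\,\dd\mu$-measure by construction. Your version instead compares $D_n$ to $D$ all at once; to control the quotient $g_n=D_n/\prod\Phi\Psi$ you are forced to introduce the sublevel sets $A_\delta$ (so the denominator is bounded away from zero), choose $\delta$ to avoid atoms of the level set, and dispose of $B^k\setminus A_\delta$ by Portmanteau, plus a preliminary tightness truncation that the paper's Lemma~\ref{lem:weakconvergence}(i) already handles internally. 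This works, but at the cost of extra layers — and requires a Tietze-type extension of $g$ from $\overline{A_\delta}$ to apply Lemma~\ref{lem:weakconvergence}(i) as literally stated, and a finite-measure (rather than probability-measure) version of Portmanteau; both are routine but worth noting explicitly. Splitting off the $\msf D$-term as the paper does lets the zero limit absorb the discontinuity of $D_n/\prod\Phi\Psi$ and avoids the $\delta$-dance entirely.
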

\begin{proof}
For any bounded functions $\phi_n$ and $\phi$, we have
\begin{multline}\label{prop:Snkineq1}
|S_{n,k}^{\mu_n}[\phi_n]-S_k^\mu[\phi]|\leq 
\int_{\K^k} \Delta_n(x_1,\ldots,x_k) \prod_{j=1}^k \dd\mu_n(x_j)\\ +\left|\int_{\K^k} \msf D(x_1,\ldots,x_k) \prod_{j=1}^k \dd\mu_n(x_j)-\int_{\K^k}\msf D(x_1,\ldots,x_k) \prod_{j=1}^k \dd\mu(x_j)\right|,
\end{multline}
where
\begin{align*}
& \msf D(x_1,\ldots,x_k )\deff \det\left(\sqrt{\phi(x_j)}\msf K(x_j,x_\ell)\sqrt{\phi(x_\ell)}\right)_{j,\ell=1}^k,\\
& 
\begin{multlined}[t]
\Delta_n(x_1,\ldots,x_k)\deff  
\left| \det\left(\sqrt{\phi_n(x_j)}\msf K_n(x_j,x_\ell)\sqrt{\phi_n(x_\ell)}\right)_{j,\ell=1}^k-\det\left(\sqrt{\phi(x_j)}\msf K(x_j,x_\ell)\sqrt{\phi(x_\ell)}\right)_{j,\ell=1}^k\right|.
\end{multlined}
\end{align*}
As before, we are interested in the choices $(\phi_n,\phi)=(\sigma_n,\sigma)$ and $(\phi_n,\phi)=(\sigma_n+h-\sigma_n h,\sigma+h-\sigma h)$ with $h:\K \to [0,1)$ continuous and compactly supported. The former case follows from the latter by setting $h\equiv 0$. So we focus only on the latter, and verify that each of the two terms on the right-hand side of \eqref{prop:Snkineq1} goes to zero.

{Let $\Phi,\Psi$ be as in Assumption~\ref{assumption:sigmavarying}, where $F$ is a bounded set for which $\supp h\subset F$ (if $h\equiv 0$, we can choose for instance $F=\emptyset$). Set
$$
G\deff \K^k\setminus \{(x_1,\ldots, x_k)\in \K^k\mid  \Phi(x_j)\Psi(x_j)=0 \text{ for some }j\}.
$$

Because the functions $\Phi$ and $\Psi$ are continuous, the set $G$ is open. Assumption~\ref{assumption:sigmavarying} (3) implies that whenever $(x_1,\cdots,x_k)\in \K^k\setminus G$, both determinants in $\Delta_n(x_1,\ldots,x_k)$ vanish because at least one row in the matrices is identically zero, so that $\Delta_n(x_1,\ldots,x_k)=0$. Therefore, we may write the second line of \eqref{prop:Snkineq1} as
\begin{equation}\label{eq:var_aux1}
\int_G \frac{\msf D(x_1,\ldots,x_k)}{\prod_{j=1}^k \Phi(x_j)\Psi(x_j)} \prod_{j=1}^k\Phi(x_j)\Psi(x_j)\dd(\mu_n-\mu)(x_j).
\end{equation}
as well as 
\begin{equation}\label{eq:var_aux2}
\begin{aligned}
\int_{\K^k} \Delta_n(x_1,\ldots,x_k) \prod_{j=1}^k \dd\mu_n(x_j) & = \int_{G} \frac{\Delta_n(x_1,\ldots,x_k)}{\prod_{j=1}^k \Phi(x_j)\Psi(x_j)} \prod_{j=1}^k \Phi(x_j)\Psi(x_j)\dd\mu_n(x_j).
\end{aligned}
\end{equation}

The function $\frac{\msf D(x_1,\ldots,x_k)}{\prod_{j=1}^k \Phi(x_j)\Psi(x_j)}$ is continuous on the open set $G$, and by the definition of $G$ we know that the measure of $\partial G$ under $\prod {\Phi}(x_j)\Psi(x_j)\dd\mu(x_j)$ is zero. The weak convergence $\Phi\Psi\dd\mu_n\stackrel{*}{\to}\Phi\Psi\dd\mu $ combined with Lemma~\ref{lem:weakconvergence} (ii) then implies that the difference of integrals in \eqref{eq:var_aux1} goes to $0$.}

To conclude the proof it remains to verify that the integral on the right-hand side of \eqref{eq:var_aux2} converges to $0$. For that, we will use Lemma~\ref{lem:weakconvergence} (i) with $E=\K^k$, an arbitrary compact set $B$, and the choices
\begin{align*}
& f_n(x_1,\ldots, x_k)=\itid_{G}(x_1,\ldots,x_k)\frac{\Delta_n(x_1,\ldots,x_k)}{\prod_{j=1}^k \Phi(x_j)\Psi(x_j)},&&  \dd\nu_n(x)=\prod_{j=1}^k \Phi(x_j)\Psi(x_j) \dd\mu_n(x_j), \\
& f(x_1,\ldots, x_k)=0, && \dd\nu(x)=\prod_{j=1}^k \Phi(x_j)\Psi(x_j) \dd\mu(x_j).
\end{align*}
From Assumption~\ref{assumption:sigmavarying} (3) we have $\nu_n\stackrel{*}{\to}\nu$. Using linearity of determinants,
\begin{multline*}
\frac{\Delta_n(x_1,\ldots,x_k)}{\prod_{j=1}^k \Phi(x_j)\Psi(x_j)}=\\ 
 \left| \det\left(\frac{\sqrt{\phi_n(x_j)}\msf K_n(x_j,x_\ell)\sqrt{\phi_n(x_\ell)}}{{\Phi(x_j)\Psi(x_\ell)}}\right)_{j,\ell=1}^k-\det\left(\frac{\sqrt{\phi(x_j)}\msf K(x_j,x_\ell)\sqrt{\phi(x_\ell)}}{\Phi(x_j)\Psi(x_\ell)}\right)_{j,\ell=1}^k\right|
\end{multline*}
Since $\supp h\subset F$ we have
$$
0\leq \phi_n = \sigma_n(1-h)+h\leq \sigma_n+\itid_F,
$$
and therefore Assumption~\ref{assumption:sigmavarying} (3) implies that each entry of each determinant in $\Delta_n$ is bounded by $1$. Applying the first Hadamard's inequality \eqref{Had1}, we obtain
\begin{equation}\label{prop:Snkineq2}
\frac{\Delta_n(x_1,\ldots,x_k)}{\prod_{j=1}^k \Phi(x_j)\Psi(x_j)} \leq 2k^{k/2},\quad (x_1,\hdots,x_k)\in G,
\end{equation}
so the sequence $(f_n)$ is indeed uniformly bounded. 

A simple calculation shows that
\begin{equation}\label{}
\|\itid_B f_n \|_{L^\infty(\nu_n)}=\| \itid_{B\cap G}\, \Delta_n \|_{L^\infty(\otimes^k \mu_n)}.
\end{equation}
To bound the right-hand side, we now show how to apply the second Hadamard inequality. With
\begin{align*}
& M=M(x)\deff \itid_{G\cap B}(x)\left(\sqrt{\phi_n(x_j)} \msf K_n(x_j,x_\ell)\sqrt{\phi_n(x_\ell)}\right)_{j,\ell=1}^k,\\
& L=L(x)\deff \itid_{G\cap B}(x)\left(\sqrt{\phi(x_j)} \msf K(x_j,x_\ell)\sqrt{\phi(x_\ell)}\right)_{j,\ell=1}^k,
\end{align*}
we have $\itid_{G\cap B}\Delta_n=|\det M-\det L|$ pointwise, and Assumption~\ref{assumption:sigmavarying} (3) gives the inequalities
$$
\|M_j(x)\|_\infty, \|L_j(x)\|_\infty \leq \|\Phi\|_{L^{\infty}(\mu_n)}\|\Psi\|_{L^{\infty}(\mu_n)},\quad x\in G\cap B.
$$
From the definition of $M_j$ and $L_j$ we also have for each $j$ that 
\begin{align*}
\|M_j(x)-L_j(x)\|_\infty \leq & \; \|\itid_B(\sqrt{\phi_n}-\sqrt{\phi})\|_{L^\infty(\mu_n)}\|\itid_B\otimes \itid_B\msf K_n\|_{L^\infty(\mu_n\otimes \mu_n)}\|\itid_B\sqrt{\phi}\|_{L^\infty(\mu_n)} \\
& +\|\itid_B\sqrt{\phi}\|_{L^\infty(\mu_n)}\|\itid_B\otimes \itid_B(\msf K_n-\msf K)\|_{L^\infty(\mu_n\otimes \mu_n)}\|\itid_B\sqrt{\phi}\|_{L^\infty(\mu_n)}
\\
& + \|\itid_B\sqrt{\phi}\|_{L^\infty(\mu_n)}\|\itid_B\otimes \itid_B \msf K \|_{L^\infty(\mu_n\otimes \mu_n)}\|\itid_B(\sqrt{\phi_n}-\sqrt{\phi})\|_{L^\infty(\mu_n)}.
\end{align*}
Denote the right-hand side of this last inequality by $\rho_n$. Hadamard's inequality \eqref{Had2} then yields that
$$
\|\itid_{B\cap G}\Delta_n\|_{L^\infty(\otimes^k \mu_n)}\leq k^{k/2}  (\|\Phi\|_{L^{\infty}(\mu_n)})^{k-1}(\|\Psi\|_{L^{\infty}(\mu_n)})^{k-1} \rho_n.
$$
From Assumption~\ref{assumption:sigmavarying} (3), the $L^\infty$ norms of $\Phi$ and $\Psi$ on the right-hand side above remain bounded as $n\to \infty$. From the inequalities $0\leq \sqrt{u+1}-\sqrt{v+1}\leq \sqrt{2}(\sqrt{u}-\sqrt{v})$ for $0\leq v\leq u\leq 1$ applied to $u=\max\{\sigma_n,\sigma\}, v=\min\{\sigma_n,\sigma\}$, we obtain that 
$$
|\sqrt{\phi_n}-\sqrt{\phi}|\leq |\sqrt{\sigma_n+\itid_F}-\sqrt{\sigma+\itid_F}|\leq \sqrt{2}|\sqrt{\sigma_n}-\sqrt{\sigma}|,
$$
and when combined with Assumption~\ref{assumption:sigmavarying} (2) it gives that $\rho_n\to 0$. Hence, $\|\itid_B f_n\|_{L^\infty(\nu_n)}\to 0$ for every bounded set $B$, Lemma~\ref{lem:weakconvergence} (i) is applicable, and we conclude that the term on the right-hand side of the first line of \eqref{prop:Snkineq1} goes to $0$, as we wanted.
\end{proof}

\begin{remark}
Note that in many situations, one may take $\Phi,\Psi$ strictly positive, such that $G=\K^k$ in the above proof. This simplifies some of the arguments such that the proof can be shortened considerably. In particular, we do not need Lemma \ref{lem:weakconvergence} (ii) in that case.
\end{remark}

\begin{lemma}\label{lemma:SB2}
If Assumption~\ref{assumption:sigmavarying} holds, then the limits \eqref{eq:serieslimitmun1}--\eqref{eq:serieslimitmun2} hold true.
\end{lemma}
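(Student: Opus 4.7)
The plan is to mirror the strategy of Lemma~\ref{lemma:S2}: first show term-by-term convergence (which is exactly Lemma~\ref{lemma:S(B)}), and then promote it to convergence of the full Fredholm series by dominated convergence for sums, using a uniform-in-$n$ bound of the form $|S_{n,k}^{\mu_n}[\phi_n]| \leq k^{k/2} C^k$ with $C$ independent of $n$.

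To produce this bound, I would start from the integral representation
\[
S_{n,k}^{\mu_n}[\phi_n]=\int_{\K^k}\det\!\left(\sqrt{\phi_n(u_j)}\msf K_n(u_j,u_\ell)\sqrt{\phi_n(u_\ell)}\right)_{j,\ell=1}^k \prod_{j=1}^k\dd\mu_n(u_j),
\]
with $\phi_n=\sigma_n$ or $\phi_n=\sigma_n+h-\sigma_n h$. In either case, if $F$ is a bounded set containing $\supp h$, then $0\le \phi_n\le \sigma_n+\itid_F$, so that by Assumption~\ref{assumption:sigmavarying}(3)
\[
\sqrt{\phi_n(u_j)}|\msf K_n(u_j,u_\ell)|\sqrt{\phi_n(u_\ell)}\le \Phi(u_j)\Psi(u_\ell), \qquad \mu_n\text{-a.e.}
\]
Factoring out $\Phi(u_j)\Psi(u_\ell)$ as in \eqref{eq:boundSnkB} shows the determinant's entries are bounded in modulus by $1$, and Hadamard's inequality \eqref{Had1} gives
\[
\left|\det\!\left(\sqrt{\phi_n(u_j)}\msf K_n(u_j,u_\ell)\sqrt{\phi_n(u_\ell)}\right)_{j,\ell=1}^k\right| \le k^{k/2}\prod_{j=1}^k \Phi(u_j)\Psi(u_j).
\]
Integrating and applying Cauchy--Schwarz in $L^2(\mu_n)$ then yields
\[
|S_{n,k}^{\mu_n}[\phi_n]|\le k^{k/2}\left(\int_\K \Phi(u)\Psi(u)\dd\mu_n(u)\right)^k \le k^{k/2}\bigl(\|\Phi\|_{L^2(\mu_n)}\|\Psi\|_{L^2(\mu_n)}\bigr)^k.
\]

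By Assumption~\ref{assumption:sigmavarying}(3), the norms $\|\Phi\|_{L^2(\mu_n)}$ and $\|\Psi\|_{L^2(\mu_n)}$ stay bounded as $n\to\infty$, so there exists $C>0$, independent of $n$ and $k$, with $|S_{n,k}^{\mu_n}[\phi_n]|\le k^{k/2} C^k$. Since the series $\sum_{k=0}^\infty \frac{k^{k/2} C^k}{k!}$ converges (by Stirling), we can apply Lebesgue's dominated convergence for series and exchange limit with summation. Combined with the term-by-term limit from Lemma~\ref{lemma:S(B)}, this yields \eqref{eq:serieslimitmun1}--\eqref{eq:serieslimitmun2}.

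The main obstacle, which is really handled upstream by Assumption~\ref{assumption:sigmavarying}(3), is to have an $n$-independent dominating pair $(\Phi,\Psi)$ whose $L^2(\mu_n)$-norms stay bounded uniformly in $n$; once this is in place, the argument is essentially identical to Lemma~\ref{lemma:S2} and no new analytical ingredient is needed.
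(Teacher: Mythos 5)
Your proposal is correct and follows essentially the same route as the paper's proof: reduce to the case with $h$, bound $|S_{n,k}^{\mu_n}[\phi_n]|$ via Hadamard and Cauchy--Schwarz by $k^{k/2}\|\Phi\|_{L^2(\mu_n)}^k\|\Psi\|_{L^2(\mu_n)}^k$, invoke the uniform boundedness of these norms from Assumption~\ref{assumption:sigmavarying}(3), and apply dominated convergence for series together with Lemma~\ref{lemma:S(B)} for term-by-term convergence.
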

\begin{proof}
Similarly to Lemma~\ref{lemma:S(B)}, it suffices to prove \eqref{eq:serieslimitmun2}, the limit \eqref{eq:serieslimitmun1} then follows by setting $h\equiv 0$. We follow closely the notation and ideas from the proof of Lemma~\ref{lemma:S(B)}. 

Set as usual $\phi_n=\sigma_n+h-\sigma_nh$, $\phi=\sigma+h-\sigma h$ and
$$
\msf D_n(x_1,\ldots,x_k)\deff \det \left(\sqrt{\phi_n(x_j)}\msf K_n(x_j,x_\ell)\sqrt{\phi_n(x_\ell)}\right)_{j,\ell=1}^k.
$$
Proceeding as we did for \eqref{prop:Snkineq2}, we obtain
\begin{multline*}
|S_{n,k}^{\mu_n}[\sigma_n]| 
\begin{aligned}[t]
& \leq \int_{G} \left| \frac{\msf D_n(x_1,\ldots,x_k)}{\prod_{j=1}^k \Phi(x_j)\Psi(x_j)} \right| \prod_{j=1}^k \Phi(x_j)\Psi(x_j)\dd\mu_n(x_j) \leq k^{k/2} \int \prod_{j=1}^k \Phi(x_j)\Psi(x_j)\dd\mu_n(x_j) \\
& \leq k^{k/2} \|\Phi\|_{L^2(\mu_n)}^k\|\Psi\|_{L^2(\mu_n)}^k,
\end{aligned}
\end{multline*}
where we used Cauchy-Schwarz for the last inequality. By assumption, the $L^2$ norms on the right-hand side remain bounded as $n\to \infty$, yielding that the series
$$
\sum_{k=1}^\infty \frac{k^{k/2}}{k!} \|\Phi\|_{L^2(\mu_n)}^k\|\Psi\|_{L^2(\mu_n)}^k
$$
is convergent. Therefore, Lebesgue's Dominated Convergence Theorem applies to the series
$$
\sum_{k=0}^\infty \frac{1}{k!}S_{n,k}^{\mu_n}[\phi_n]
$$
and it provides \eqref{eq:serieslimitmun2}.
\end{proof}
This concludes the proof of Theorem \ref{thm:biOPE-varying}.


\subsection*{Acknowledgements}
TC acknowledges support by {\em FNRS Research
Project T.0028.23} and by the {\em Fonds Sp\'ecial de Recherche} of UCLouvain. GS acknowledges support by São Paulo Research Foundation (FAPESP) under Grants \# 2019/16062-1 and \# 2020/02506-2, by Brazilian National Council for Scientific and Technological Development (CNPq) under Grant \# 306183/2023-4, and by the {\it Programa de Apoio aos Novos Docentes} PRPI-USP.

\bibliographystyle{abbrv}  


\begin{thebibliography}{10}

\bibitem{AkemannMolagDuits2023}
G.~Akemann, M.~Duits, and L.~D. Molag.
\newblock The elliptic {G}inibre ensemble: a unifying approach to local and
  global statistics for higher dimensions.
\newblock {\em J. Math. Phys.}, 64(2):Paper No. 023503, 39, 2023.

\bibitem{AkemannIpsen15}
G.~Akemann and J.~R. Ipsen.
\newblock Recent exact and asymptotic results for products of independent
  random matrices.
\newblock {\em Acta Phys. Polon. B}, 46(9):1747--1784, 2015.

\bibitem{AkemannKieburgWei13}
G.~Akemann, M.~Kieburg, and L.~Wei.
\newblock Singular value correlation functions for products of {W}ishart random
  matrices.
\newblock {\em J. Phys. A}, 46(27):275205, 22, 2013.

\bibitem{Baik2007a}
J.~Baik, T.~Kriecherbauer, K.~T.-R. McLaughlin, and P.~D. Miller.
\newblock {\em Discrete orthogonal polynomials}, volume 164 of {\em Annals of
  Mathematics Studies}.
\newblock Princeton University Press, Princeton, NJ, 2007.
\newblock Asymptotics and applications.

\bibitem{BaloghBertolaLeeMcLaughlin15}
F.~Balogh, M.~Bertola, S.-Y. Lee, and K.~D. T.-R. McLaughlin.
\newblock Strong asymptotics of the orthogonal polynomials with respect to a
  measure supported on the plane.
\newblock {\em Comm. Pure Appl. Math.}, 68(1):112--172, 2015.

\bibitem{BeltranMarzoOrtega-Cerda2016}
C.~Beltr\'an, J.~Marzo, and J.~Ortega-Cerd\`a.
\newblock Energy and discrepancy of rotationally invariant determinantal point
  processes in high dimensional spheres.
\newblock {\em J. Complexity}, 37:76--109, 2016.

\bibitem{Berman2014}
R.~J. Berman.
\newblock Determinantal point processes and fermions on complex manifolds:
  large deviations and bosonization.
\newblock {\em Comm. Math. Phys.}, 327(1):1--47, 2014.

\bibitem{BillingsleyBookConv}
P.~Billingsley.
\newblock {\em Convergence of probability measures}.
\newblock Wiley Series in Probability and Statistics: Probability and
  Statistics. John Wiley \& Sons, Inc., New York, second edition, 1999.
\newblock A Wiley-Interscience Publication.

\bibitem{Bleher2011}
P.~Bleher and K.~Liechty.
\newblock Uniform asymptotics for discrete orthogonal polynomials with respect
  to varying exponential weights on a regular infinite lattice.
\newblock {\em Int. Math. Res. Not. IMRN}, (2):342--386, 2011.

\bibitem{BleherIts99}
P.~M. Bleher and A.~R. Its.
\newblock Semiclassical asymptotics of orthogonal polynomials,
  {R}iemann-{H}ilbert problem, and universality in the matrix model.
\newblock {\em Ann. of Math.}, 150(1):185--266, 1999.

\bibitem{bleher_kuijlaars_external_source_multiple_orthogonal}
P.~M. Bleher and A.~B.~J. Kuijlaars.
\newblock Random matrices with external source and multiple orthogonal
  polynomials.
\newblock {\em Int. Math. Res. Not.}, (3):109--129, 2004.

\bibitem{KuijlaarsBleher12}
P.~M. Bleher and A.~B.~J. Kuijlaars.
\newblock Orthogonal polynomials in the normal matrix model with a cubic
  potential.
\newblock {\em Adv. Math.}, 230(3):1272--1321, 2012.

\bibitem{bleher_liechty_book}
P.~M. Bleher and K.~Liechty.
\newblock {\em Random matrices and the six-vertex model}, volume~32 of {\em CRM
  Monograph Series}.
\newblock American Mathematical Society, Providence, RI, 2014.

\bibitem{BogatskiyClaeysIts16}
A.~Bogatskiy, T.~Claeys, and A.~Its.
\newblock Hankel determinant and orthogonal polynomials for a {G}aussian weight
  with a discontinuity at the edge.
\newblock {\em Comm. Math. Phys.}, 347(1):127--162, 2016.

\bibitem{Borodin99biorth}
A.~Borodin.
\newblock Biorthogonal ensembles.
\newblock {\em Nuclear Phys. B}, 536(3):704--732, 1999.

\bibitem{Borodin2011}
A.~{Borodin}.
\newblock Determinantal point processes.
\newblock {\em The Oxford Handbook of Random Matrix Theory, Oxford University
  Press}, pages 231--249, 2011.

\bibitem{borodin_gorin_notes}
A.~Borodin and V.~Gorin.
\newblock Lectures on integrable probability.
\newblock In {\em Probability and statistical physics in {S}t. {P}etersburg},
  volume~91 of {\em Proc. Sympos. Pure Math.}, pages 155--214. Amer. Math.
  Soc., Providence, RI, 2016.

\bibitem{BorodinOlshanski2001}
A.~Borodin and G.~Olshanski.
\newblock Infinite random matrices and ergodic measures.
\newblock {\em Commun. Math. Phys.}, 223(1):87--123, 2001.

\bibitem{Borodin2017}
A.~Borodin and G.~Olshanski.
\newblock The {ASEP} and determinantal point processes.
\newblock {\em Comm. Math. Phys.}, 353(2):853--903, 2017.

\bibitem{Bothner2015}
T.~Bothner, P.~Deift, A.~Its, and I.~Krasovsky.
\newblock On the asymptotic behavior of a log gas in the bulk scaling limit in
  the presence of a varying external potential {I}.
\newblock {\em Comm. Math. Phys.}, 337(3):1397--1463, 2015.

\bibitem{CafassoClaeys24}
M.~Cafasso and T.~Claeys.
\newblock Biorthogonal measures, polymer partition functions, and random
  matrices.
\newblock {\em arXiv:2401.10130}.

\bibitem{CafassoClaeysRuzza2021}
M.~Cafasso, T.~Claeys, and G.~Ruzza.
\newblock Airy kernel determinant solutions to the {K}d{V} equation and
  integro-differential {P}ainlevé equations.
\newblock {\em Comm. Math. Phys.}, 386(2):1107--1153, 2021.

\bibitem{CandidoAlvesChouteau2025}
C.~E. Candido, V.~Alves, T.~Chouteau, C.~F. Santos, and G.~L.~F. Silva.
\newblock Deformations of {OP} ensembles in a bulk critical scaling.
\newblock {\em ArXiv:2506.05622}, 2025.

\bibitem{ClaeysGlesner2021}
T.~{Claeys} and G.~{Glesner}.
\newblock {Determinantal point processes conditioned on randomly incomplete
  configurations}.
\newblock {\em arXiv e-prints}, page arXiv:2112.10642, Dec. 2021.

\bibitem{claeys_kuijlaars08}
T.~Claeys and A.~B.~J. Kuijlaars.
\newblock Universality in unitary random matrix ensembles when the soft edge
  meets the hard edge.
\newblock In {\em Integrable systems and random matrices}, volume 458 of {\em
  Contemp. Math.}, pages 265--279. Amer. Math. Soc., Providence, RI, 2008.

\bibitem{ClaeysTarricone24}
T.~Claeys and S.~Tarricone.
\newblock {On the integrable structure of deformed sine kernel determinants}.
\newblock {\em arXiv:2309.03803}, 2024.

\bibitem{DaleyVere-Jones-VolI}
D.~J. Daley and D.~Vere-Jones.
\newblock {\em An introduction to the theory of point processes. {V}ol. {I}}.
\newblock Probability and its Applications (New York). Springer-Verlag, New
  York, second edition, 2003.
\newblock Elementary theory and methods.

\bibitem{DaleyVere-JonesVolII}
D.~J. Daley and D.~Vere-Jones.
\newblock {\em An introduction to the theory of point processes. {V}ol. {II}}.
\newblock Probability and its Applications (New York). Springer, New York,
  second edition, 2008.
\newblock General theory and structure.

\bibitem{DasDimitrov2022}
S.~Das and E.~Dimitrov.
\newblock Large deviations for discrete {$\beta$}-ensembles.
\newblock {\em J. Funct. Anal.}, 283(1):Paper No. 109487, 108, 2022.

\bibitem{deift_book}
P.~Deift.
\newblock {\em Orthogonal Polynomials and Random Matrices: A
  {R}iemann-{H}ilbert Approach}.
\newblock Number~3 in Courant Lecture Notes. American Mathematical Society,
  2000.

\bibitem{DeiftUniversality2007}
P.~Deift.
\newblock Universality for mathematical and physical systems.
\newblock In {\em Proceedings of the international congress of mathematicians
  (ICM), Madrid, Spain, August 22--30, 2006. Volume I: Plenary lectures and
  ceremonies}, pages 125--152. Z{\"u}rich: European Mathematical Society (EMS),
  2007.

\bibitem{deift_its_zhou_riemann_hilbert_random_matrices_integrable_statistical_mechanics}
P.~Deift, A.~R. Its, and X.~Zhou.
\newblock A {R}iemann-{H}ilbert approach to asymptotic problems arising in the
  theory of random matrix models and also in the theory of integrable
  statistical mechanics.
\newblock {\em Ann. of Math.}, 146:149--235, 1997.

\bibitem{Deiftetal99b}
P.~Deift, T.~Kriecherbauer, K.~T.-R. McLaughlin, S.~Venakides, and X.~Zhou.
\newblock Strong asymptotics of orthogonal polynomials with respect to
  exponential weights.
\newblock {\em Comm. Pure Appl. Math.}, 52(12):1491--1552, 1999.

\bibitem{Deiftetal1999}
P.~Deift, T.~Kriecherbauer, K.~T.-R. McLaughlin, S.~Venakides, and X.~Zhou.
\newblock Uniform asymptotics for polynomials orthogonal with respect to
  varying exponential weights and applications to universality questions in
  random matrix theory.
\newblock {\em Comm. Pure Appl. Math.}, 52:1335--1425, 1999.

\bibitem{deift_kriecherbauer_mclaughlin}
P.~Deift, K.~T.-R. McLaughlin, and T.~Kriecherbauer.
\newblock New results on the equilibrium measure for logarithmic potentials in
  the presence of an external field.
\newblock {\em J. Approx. Theory}, 95:388--475, 1998.

\bibitem{delvaux_kuijlaars_zhang}
S.~Delvaux, A.~B.~J. Kuijlaars, and L.~Zhang.
\newblock Critical behavior of non-intersecting {B}rownian-motions at a
  tacnode.
\newblock {\em Comm. Pure Appl. Math.}, 64:1305--1383, 2011.

\bibitem{DragnevSaff97}
P.~D. Dragnev and E.~B. Saff.
\newblock Constrained energy problems with applications to orthogonal
  polynomials of a discrete variable.
\newblock {\em J. Anal. Math.}, 72:223--259, 1997.

\bibitem{DragnevSaff00}
P.~D. Dragnev and E.~B. Saff.
\newblock A problem in potential theory and zero asymptotics of {K}rawtchouk
  polynomials.
\newblock {\em J. Approx. Theory}, 102(1):120--140, 2000.

\bibitem{Duits14}
M.~Duits.
\newblock Painlev\'e kernels in {H}ermitian matrix models.
\newblock {\em Constr. Approx.}, 39(1):173--196, 2014.

\bibitem{Duits18}
M.~Duits.
\newblock On global fluctuations for non-colliding processes.
\newblock {\em Ann. Probab.}, 46(3):1279--1350, 2018.

\bibitem{DuitsKuijlaars09}
M.~Duits and A.~B.~J. Kuijlaars.
\newblock Universality in the two-matrix model: a {R}iemann-{H}ilbert
  steepest-descent analysis.
\newblock {\em Comm. Pure Appl. Math.}, 62(8):1076--1153, 2009.

\bibitem{DuitsKuijlaars2021}
M.~Duits and A.~B.~J. Kuijlaars.
\newblock The two-periodic {A}ztec diamond and matrix valued orthogonal
  polynomials.
\newblock {\em J. Eur. Math. Soc. (JEMS)}, 23(4):1075--1131, 2021.

\bibitem{DysonPV}
F.~J. Dyson.
\newblock The {C}oulomb fluid and the fifth {P}ainlev\'e{} transcendent.
\newblock In {\em Chen {N}ing {Y}ang}, pages 131--146. Int. Press, Cambridge,
  MA, 1995.

\bibitem{Feral08}
D.~F\'eral.
\newblock On large deviations for the spectral measure of discrete {C}oulomb
  gas.
\newblock In {\em S\'eminaire de probabilit\'es {XLI}}, volume 1934 of {\em
  Lecture Notes in Math.}, pages 19--49. Springer, Berlin, 2008.

\bibitem{Forrester1993Bessel}
P.~J. Forrester.
\newblock The spectrum edge of random matrix ensembles.
\newblock {\em Nucl. Phys., B}, 402(3):709--728, 1993.

\bibitem{GhosalSilva22}
P.~Ghosal and G.~L.~F. Silva.
\newblock Universality for multiplicative statistics of {H}ermitian random
  matrices and the integro-differential {P}ainlevé {II} equation.
\newblock {\em Comm. Math. Phys. (to appear, ArXiv:2201.12941)}, page 60 pp.,
  2022.

\bibitem{GuionnetJiaoyang19}
A.~Guionnet and J.~Huang.
\newblock Rigidity and edge universality of discrete {$\beta$}-ensembles.
\newblock {\em Comm. Pure Appl. Math.}, 72(9):1875--1982, 2019.

\bibitem{ImamuraSasamoto16}
T.~Imamura and T.~Sasamoto.
\newblock Determinantal structures in the {O}'{C}onnell-{Y}or directed random
  polymer model.
\newblock {\em J. Stat. Phys.}, 163(4):675--713, 2016.

\bibitem{IpsenRehmanDetBounds}
I.~C.~F. Ipsen and R.~Rehman.
\newblock Perturbation bounds for determinants and characteristic polynomials.
\newblock {\em SIAM J. Matrix Anal. Appl.}, 30(2):762--776, 2008.

\bibitem{IIKS}
A.~R. Its, A.~G. Izergin, V.~E. Korepin, and N.~A. Slavnov.
\newblock Differential equations for quantum correlation functions.
\newblock In {\em Proceedings of the {C}onference on {Y}ang-{B}axter
  {E}quations, {C}onformal {I}nvariance and {I}ntegrability in {S}tatistical
  {M}echanics and {F}ield {T}heory}, volume~4, pages 1003--1037, 1990.

\bibitem{JimboMiwaMoriSato}
M.~Jimbo, T.~Miwa, Y.~M\^ori, and M.~Sato.
\newblock Density matrix of an impenetrable {B}ose gas and the fifth
  {P}ainlev\'e{} transcendent.
\newblock {\em Phys. D}, 1(1):80--158, 1980.

\bibitem{johansson_2000}
K.~Johansson.
\newblock Shape fluctuations and random matrices.
\newblock {\em Comm. Math. Phys.}, 209(2):437--476, 2000.

\bibitem{Johansson2001}
K.~Johansson.
\newblock Discrete orthogonal polynomial ensembles and the {P}lancherel
  measure.
\newblock {\em Ann. of Math. (2)}, 15(1):259--296, 2001.

\bibitem{Johansson2002nonintersecting}
K.~Johansson.
\newblock Non-intersecting paths, random tilings and random matrices.
\newblock {\em Probab. Theory Relat. Fields}, 123(2):225--280, 2002.

\bibitem{johansson_2003}
K.~Johansson.
\newblock Discrete polynuclear growth and determinantal processes.
\newblock {\em Comm. Math. Phys.}, 242(1-2):277--329, 2003.

\bibitem{Johansson2005RWHahn}
K.~Johansson.
\newblock Non-intersecting, simple, symmetric random walks and the extended
  {H}ahn kernel.
\newblock {\em Ann. Inst. Fourier (Grenoble)}, 55(6):2129--2145, 2005.

\bibitem{Johansson06}
K.~Johansson.
\newblock Random matrices and determinantal processes.
\newblock In {\em Mathematical statistical physics}, pages 1--55. Elsevier B.
  V., Amsterdam, 2006.

\bibitem{Kenyon2004}
R.~Kenyon.
\newblock An introduction to the dimer model.
\newblock In {\em School and conference on probability theory, Trieste, Italy,
  May 13--17, 2002.}, pages 267--304. Trieste: ICTP - The Abdus Salam
  International Centre for Theoretical Physics, 2004.

\bibitem{KieburgKuijlaarsStivigny16}
M.~Kieburg, A.~B.~J. Kuijlaars, and D.~Stivigny.
\newblock Singular value statistics of matrix products with truncated unitary
  matrices.
\newblock {\em Int. Math. Res. Not. IMRN}, 2016(11):3392--3424, 2016.

\bibitem{KimuraNavand2024}
T.~Kimura and X.~Navand.
\newblock Fredholm determinants from {S}chr\"odinger type equations, and
  deformation of {T}racy-{W}idom distribution.
\newblock {\em ArXiv:2408.06888}, 2024.

\bibitem{Konig05}
W.~K\"{o}nig.
\newblock Orthogonal polynomial ensembles in probability theory.
\newblock {\em Probab. Surv.}, 2:385--447, 2005.

\bibitem{KuijlaarsICM}
A.~B.~J. Kuijlaars.
\newblock Multiple orthogonal polynomials in random matrix theory.
\newblock In {\em Proceedings of the {I}nternational {C}ongress of
  {M}athematicians. {V}olume {III}}, pages 1417--1432, New Delhi, 2010.
  Hindustan Book Agency.

\bibitem{KuijlaarsStivigny14}
A.~B.~J. Kuijlaars and D.~Stivigny.
\newblock Singular values of products of random matrices and polynomial
  ensembles.
\newblock {\em Random Matrices Theory Appl.}, 3(3):1450011, 22, 2014.

\bibitem{KuijlaarsVanlessen2002HardEdge}
A.~B.~J. Kuijlaars and M.~Vanlessen.
\newblock Universality for eigenvalue correlations from the modified {Jacobi}
  unitary ensemble.
\newblock {\em Int. Math. Res. Not.}, 2002(30):1575--1600, 2002.

\bibitem{LevinLubinsky08}
E.~Levin and D.~S. Lubinsky.
\newblock Universality limits in the bulk for varying measures.
\newblock {\em Adv. Math.}, 219(3):743--779, 2008.

\bibitem{LevinLubinsky11}
E.~Levin and D.~S. Lubinsky.
\newblock Universality limits at the soft edge of the spectrum via classical
  complex analysis.
\newblock {\em Int. Math. Res. Not. IMRN}, (13):3006--3070, 2011.

\bibitem{Lubinsky08}
D.~S. Lubinsky.
\newblock Universality limits in the bulk for arbitrary measures on compact
  sets.
\newblock {\em J. Anal. Math.}, 106:373--394, 2008.

\bibitem{Lubinsky09}
D.~S. Lubinsky.
\newblock A new approach to universality limits involving orthogonal
  polynomials.
\newblock {\em Ann. of Math. (2)}, 170(2):915--939, 2009.

\bibitem{McLaughlinMiller08}
K.~T.-R. McLaughlin and P.~D. Miller.
\newblock The {$\overline{\partial}$} steepest descent method for orthogonal
  polynomials on the real line with varying weights.
\newblock {\em Int. Math. Res. Not. IMRN}, pages Art. ID rnn 075, 66, 2008.

\bibitem{MehtaPV}
M.~L. Mehta.
\newblock A nonlinear differential equation and a {F}redholm determinant.
\newblock {\em J. Physique I}, 2(9):1721--1729, 1992.

\bibitem{MorenoMartinezFinkelshteinSousa2010}
A.~F. Moreno, A.~Mart{\'{\i}}nez-Finkelshtein, and V.~L. Sousa.
\newblock On a conjecture of {A}. {Magnus} concerning the asymptotic behavior
  of the recurrence coefficients of the generalized {Jacobi} polynomials.
\newblock {\em J. Approx. Theory}, 162(4):807--831, 2010.

\bibitem{NagaoWadati1994}
T.~Nagao and M.~Wadati.
\newblock Eigenvalue distribution of random matrices at the spectrum edge.
\newblock {\em J. Phys. Soc. Japan}, 62(11):3845--3856, 1993.

\bibitem{pastur_shcherbina_universality}
L.~Pastur and M.~Shcherbina.
\newblock Universality of the local eigenvalue statistics for a class of
  unitary invariant random matrix ensembles.
\newblock {\em J. Statist. Phys.}, 86(1-2):109--147, 1997.

\bibitem{Remenik2023ICM}
D.~Remenik.
\newblock Integrable fluctuations in the {KPZ} universality class.
\newblock In {\em International congress of mathematicians 2022, ICM 2022,
  Helsinki, Finland, virtual, July 6--14, 2022. Volume 6. Sections 12--14},
  pages 4426--4450. Berlin: European Mathematical Society (EMS), 2023.

\bibitem{Ruzza24}
G.~Ruzza.
\newblock {Bessel kernel determinants and integrable equations}.
\newblock {\em arXiv:2401.11213}, 2024.

\bibitem{Saff_book}
E.~B. Saff and V.~Totik.
\newblock {\em Logarithmic potentials with external fields}, volume 316 of {\em
  Fundamental Principles of Mathematical Sciences}.
\newblock Springer-Verlag, Berlin, 1997.
\newblock Appendix B by Thomas Bloom.

\bibitem{ShiraiTakahashi}
T.~Shirai and Y.~Takahashi.
\newblock {\em J. Funct. Anal.}, (205):414--463, 2003.

\bibitem{Simon_trace}
B.~Simon.
\newblock {\em Trace Ideals and Their Applications}.
\newblock Mathematical Surveys and Monographs - vol. 120. American Mathematical
  Society, New York, second edition, 2005.

\bibitem{Soshnikov2000a}
A.~Soshnikov.
\newblock Determinantal random point fields.
\newblock {\em Uspekhi Mat. Nauk}, 55(5(335)):107--160, 2000.

\bibitem{Totik00}
V.~Totik.
\newblock Asymptotics for {C}hristoffel functions with varying weights.
\newblock {\em Adv. in Appl. Math.}, 25(4):322--351, 2000.

\bibitem{TracyWidom1994Bessel}
C.~A. Tracy and H.~Widom.
\newblock Level spacing distributions and the {Bessel} kernel.
\newblock {\em Commun. Math. Phys.}, 161(2):289--309, 1994.

\bibitem{XuZhao}
S.-X. Xu and Y.-Q. Zhao.
\newblock Painlev\'{e} {XXXIV} asymptotics of orthogonal polynomials for the
  {G}aussian weight with a jump at the edge.
\newblock {\em Stud. Appl. Math.}, 127(1):67--105, 2011.
\end{thebibliography}





\end{document}